\def\R{\mathbb R}
\def\N{\mathbb N}
\def\cal{\mathcal}
\def\E{{\cal E}}
\def\F{{\cal F}}
\def\H{{\cal H}}
\def\L{{\cal L}}
\def\a{\alpha}
\def\de{\delta}
\def\e{\varepsilon}
\def\l{\lambda}
\def\om{\omega}
\def\vt{\vartheta}
\def\Om{\Omega}
\def\La{\Lambda}
\def\S{\Sigma}
\newenvironment{sistema}%
{\left\lbrace\begin{array}{@{}l@{}}}%
{\end{array}\right.}
\def\pa{\partial}
\def\d{\, \mathrm{d}}
\def\bd{{\rm bd}}
\def\ca{\mathbbmss{1}}
\def\pared{\partial^{*}}
\def\00{{\bf 0}}
\def\dive{{\rm div}}
\def\mez{\left(\frac{1}{2}\right)}
\def\uno{(1)}
\def\zero{(0)}
\newcommand{\cc}{\subset\subset}
\DeclareMathOperator*{\diam}{diam}
\newtheorem{theorem}{Theorem}[section]
\newtheorem{corollary}[theorem]{Corollary}
\newtheorem{proposition}[theorem]{Proposition}
\newtheorem{lemma}[theorem]{Lemma}
\theoremstyle{definition}
\newtheorem{remark}[theorem]{Remark}
\newtheorem{definition}[theorem]{Definition}
\numberwithin{equation}{section}
\numberwithin{figure}{section}
\author{M. Caroccia}
\address{Dipartmento di Matematica, Universit\`a di Pisa, Largo Bruno Pontecorvo 5, 56127 Pisa, Italy}
\email{caroccia.marco@gmail.com}
\title{Cheeger $N$-clusters}
\begin{document}
\begin{abstract}
In this paper we introduce a Cheeger-type constant defined as a minimization of a suitable functional among all the $N$-clusters contained in an open bounded set $\Omega$. Here with $N$-Cluster we mean a family of $N$ sets of finite perimeter, disjoint up to a set of null Lebesgue measure. We call any $N$-cluster attaining such a minimum a \textit{Cheeger $N$-cluster}. Our purpose is to provide a non trivial lower bound on the optimal partition problem for the first Dirichlet eigenvalue of the Laplacian. Here we discuss the regularity of Cheeger $N$-clusters in a general ambient space dimension and we give a precise description of their structure in the planar case. The last part is devoted to the relation between the functional introduced here (namely the \textit{$N$-Cheeger constant}), the partition problem for the first Dirichlet eigenvalue of the Laplacian and the Caffarelli and Lin's conjecture.
\end{abstract}
\maketitle
\tableofcontents

\section{Introduction}\label{cpt 4 sct 1}
For a given open, bounded set $\Om$ and an integer $N\in \N$ we introduce the \textit{$N$-Cheeger constant of $\Om$} as:
\begin{equation}\label{N-cheeger constant 1} 
H_N(\Om)=\inf\left\{\sum_{i=1}^N\frac{P(\E(i))}{|\E(i)|}\ \Big{|}\ \E=\{\E(i)\}_{i=1}^{N} \subseteq \Om , \text{ is an $N$-cluster}\right\}.
\end{equation}
where $|\cdot|$ denotes the Lebesgue measure and $P(\cdot)$ denotes the distributional perimeter.\\

We do not want to enter into the details of what a \textit{finite perimeter set} is (for a complete overview on such a topic we refer the reader to \cite[Chapters 12-20]{maggibook}) let us just highlights that the distributional perimeter of a Borel set $E$ having smooth boundary $\pa E$ coincide with the $(n-1)$-dimensional area of the boundary: $\H^{n-1}(\pa E)$. \\

Here and in the sequel an $N$-cluster $\E$ should be intended just as a family of $N$ Borel sets $\E=\{\E(i)\}_{i=1}^N$ with the following properties:
\begin{itemize}
\item[(a)] $0<|\E(i)|<+\infty$ for all $i=1,\ldots,N$;
\item[(b)] $|\E(i)\cap \E(j)|=0$ for all $i,j=1,\ldots,N$ with $i\neq j$;
\item[(c)] $P(\E(i))<+\infty$ for all $i=1,\ldots,N$.
\end{itemize}

As shown below in Theorem \ref{existence}, the infimum in \eqref{N-cheeger constant 1} is always attained and we refer to the minimizers as the \textit{Cheeger $N$-clusters of $\Om$}.\\

\indent We focus on the quantity $H_N$ because it seems to represent the right object to study in order to provide some non trivial lower bound on the optimal partition functional
\begin{equation}\label{p-laplacian}
\Lambda_N^{(p)}(\Om)=\inf\left\{\sum_{i=1}^N\lambda_1^{(p)}(\E(i))  \right\},
\end{equation}
where $\lambda_1^{(p)}$ denotes the first Dirichlet eigenvalue of the p-Laplacian, defined as:
$$\lambda_1^{(p)}(E):=\inf\left\{\int_{E} |\nabla u|^{p} \d x \ \Big{|}\ u\in W^{1,p}_0(E), \ \|u\|_{L^{p}}=1 \right\}.$$
The infimum in \eqref{p-laplacian} is taken over all the $N$-clusters $\E$ whose chambers are \textit{quasi-open sets of $\Om$}. The family of quasi-open sets of an open bounded set $\Om$ is a suitable sub-class of the Borel's algebra of $\Om$  where the first Dirichlet eigenvalue of the $p$-Laplacian $\l_1^{(p)}$ can be defined. The definition of quasi-open set is related to the concept of $p$-capacitary measure in $\R^n$ that we do not need to recall in here (see \cite{EvGa91} for more details about it). For our purposes it is enough to recall that:
\text{}\newline
 \begin{center}
\textit{the quasi-open sets are the upper levels of $W^{1,p}$ functions as well as the open sets are the upper levels of continuous functions. Each open set of an open bounded set $\Om$ is also a quasi-open set of $\Om$.}
\end{center}
\text{}\newline
The importance of the partition problem \eqref{p-laplacian} relies in the fact that it provides a way to look at the asymptotic behavior in $N$ of the $N$-th Dirichlet eigenvalue of the classical Laplacian (the $2$-Laplacian), as Caffarelli and Lin show in \cite{CaLi07}. The  $N$-th Dirichlet eigenvalue of the Laplacian of an open set $\Om$ is recursively defined as 
\begin{align*}
\l_N^{(2)}(\Om)&=\inf_{u\in X_{N-1}}\left\{\frac{\int_{\Om}|\nabla u|^2\d x}{\int_{\Om}|u|^2\d x}\right\}\\
X_{N-1}&=\left\{u\in W^{1,2}_0(\Om) \ | \ \langle u,u_i\rangle_2=0, \ \ \text{for all $i=1,\ldots, N-1$}\right\}
\end{align*}
where $u_1,\ldots,u_{N-1}$ are the first $N-1$ eigenfunctions
	\[
	\l_i^{(2)}(\Om)=\frac{\int_{\Om}|\nabla u_i|^2\d x}{\int_{\Om}|u_i|^2\d x} \ \ \ \ \ \text{for all $i=1,\ldots, N-1$}
	\]
and $\langle \cdot ,\cdot\rangle_2$ denotes the standard scalar product of $L^2(\Om)$
	\[
	\langle u ,v\rangle_2=\int_{\Om} uv\d x \ \ \ \ \ \text{for all $u,v\in L^2(\Om)$}
	\]
(see \cite[Section 6.5]{EvGa91} for a detailed discussion about eigenvalues and eigenfunctions). In \cite{CaLi07}, Caffarelli and Lin prove that there exist two constants $C_1$ and $C_2$ depending only on the dimension such that
\begin{equation}\label{CaLi estimate}
C_1\frac{\Lambda_N^{(2)}(\Om)}{N}\leq \lambda_N^{(2)}(\Om) \leq C_2 \frac{\Lambda_N^{(2)}(\Om)}{N},
\end{equation}
where $\lambda_N^{(2)}$ is the $N$-th Dirichlet eigenvalue. The detailed study of $\lambda_N^{(2)}(\Om)$ for $N\geq 2$ seems to be an hard task (so far only the case $N=1,2$ are well known in details, see for instance \cite{H06}) and that is why the asymptotic approach suggested by Caffarelli and Lin could be a good way to look at the spectral problem. We also refer the reader to \cite{bucur2012minimization} where the existence of minimizers for $\l_N^{(2)}$ is proved. \\

\indent Caffarelli and Lin's conjecture (appearing in \cite{CaLi07}) about the asymptotic behavior of $\Lambda_N^{(2)}(\Om)$ in the planar case states that 
$$\Lambda_N^{(2)}(\Om)=\frac{N^{2}}{|\Om|}\lambda_1^{(2)}(H)+o(N^{2}),$$
where $H$ denotes a unit-area regular hexagon. So far, no progress has been made in proving the conjecture, anyway numerical simulations (see \cite{BouBucO09}) point out that the conjecture could be true. If the conjecture turns out to be true, relation \eqref{CaLi estimate} could be improved, in the planar case, as:
\begin{equation}\label{CaLi estimate 2.0}
C_1\frac{N\lambda_1^{(2)}(H)}{|\Om|}+o(N)\leq \lambda_N^{(2)}(\Om) \leq C_2 \frac{N\lambda_1^{(2)}(H)}{|\Om|}+o(N).
\end{equation}

In order to explain the connection between $H_N$ and $\Lambda_N^{(p)}$ we recall some well-known fact about the classical Cheeger constant of a Borel set $\Om$:
\begin{equation}\label{cheeger constant}
h(\Om):=\inf\left\{\frac{P(E)}{|E|} \ \Big{|} \ E \subseteq \Om \right\},
\end{equation}
(note that $h(\Om)=H_1(\Om)$). Given an open set $\Om$, each set $E\subseteq \Om$ such that $h(\Om)=\frac{P(E)}{|E|}$ is called \textit{Cheeger set for $\Om$}. It is possible to prove that each Cheeger set $E$ for $\Om$ is a $(\La,r_0)$-perimeter-minimizing inside $\Om$ (see Definition \ref{Lambdarminimi} below) and that $\pared E\cap \Om$ is a constant mean curvature analytic hypersurface relatively open inside $\pa E$ (here $\pared E$ denotes the \textit{reduced boundary of the finite perimeter set $E$}, we refer the reader to \cite[Chapter 15]{maggibook} for further details). Furthermore, the mean curvature $C$ of the set $E$ in the open set $\Om$ is equal to $C=\frac{1}{n-1}h(E)$. We refer the reader to \cite{Pa11} and \cite{Leo15}: two exhaustive surveys on Cheeger sets and Cheeger constant.\\

The Cheeger constant was introduced by Jeff Cheeger in \cite{Ch70} and provides a lower bound on the first Dirichlet eigenvalue of the $p$-Laplacian of a domain $\Om$. By exploiting the coarea formula and Holdër' s inequality  it is possible to show that for every domain $\Om$ and for every $p>1$ it holds,
\begin{equation}\label{lb p-eig}
\lambda_1^{(p)}(\Om)\geq \left(\frac{h(\Om)}{p}\right)^p.
\end{equation}
The Cheeger constant is also called the {\it first Dirichlet eigenvalue of the 1-laplacian} since, thanks to \eqref{lb p-eig} and to a comparison argument
\begin{equation}\label{eqn limite per p che tende a uno}
\lim_{p\rightarrow 1} \lambda_1^{(p)}(\Om)=h(\Om).
\end{equation}
See, for example, \cite{KN08} for more details about the relation between the Cheeger constant and the first Dirichlet eigenvalue of the p-Laplacian  or \cite{BucBu05} and \cite{Bu10} for more details about the spectral problems and shape optimization problems. \\

We note here that the constant $H_N$ is the analogous of the Cheeger constant in the optimal partition problem for $p$-laplacian eigenvalues. We refer the reader to \cite{Pa09}, where a generalized type of Cheeger constant for the $2$-nd Dirichlet eigenvalue of the Laplacian is also studied. As we show in Proposition \ref{limit} below, we can always give a lower bound on $\Lambda_N^{(p)}$ by making use of \eqref{lb p-eig} and Jensen's inequality: 
\begin{equation}\label{Lp lowerbound}
\Lambda_N^{(p)}(\Om)\geq \frac{1}{N^{p-1}}\left(\frac{H_N(\Om)}{p}\right)^{p}.
\end{equation}
By combining \eqref{Lp lowerbound} with a comparison argument (see Theorem \ref{limit} below) we are also able to compute the limit as $p$ goes to $1$ and obtain
 	\begin{equation}\label{viva l italia}
 	\lim_{p\rightarrow 1}  \Lambda_N^{(p)}(\Om)=H_N(\Om).
 	\end{equation}
Thus, the constant $H_N$ seems to provide the suitable generalization of the Cheeger constant for the study of $\La_N^{(p)}$.\\

In this paper we mainly focus on the general structure and regularity of Cheeger $N$-clusters in order to lay the basis for future investigations on $H_N$. In the final section, once we have proved \eqref{viva l italia}, we study the asymptotic behavior of $H_N$ in the planar case. The statements involving regularity are quite technical and we reserve to them the whole Section \ref{cpt 4 sct 1 sbsct 1} (Theorems \ref{mainthm1}, \ref{mainthm2} and \ref{mainthm3}),  we just point out here that if $\E$ is a Cheeger $N$-cluster of $\Om$ the following statement holds.\\

\begin{large}
Theorems \ref{mainthm1}, \ref{mainthm2} and \ref{mainthm3}.\end{large} \textit{For every $i=1,\ldots,N$ the reduced boundary of each chambers $\pared \E(i)\cap \Om$ is a $C^{1,\a}-$h\-y\-per\-sur\-fa\-ces (for every $\a\in (0,1)$ ) that is relatively open inside $\pa \E(i)\cap \Om$. Furthermore it is possible to characterize the singular set of a Cheeger $N$-cluster $\E$ as a suitable collection of points with density zero for the external chamber 
	\[
	\E(0)=\Om\setminus \bigcup_{i=1}^N \E(i).
	\]
Moreover if the dimension is $n=2$ then the singular set is discrete and the chambers $\E(i)\cc \Om$ are indecomposable.}\\

Note that, in this context, the external chambers should be intended as $\Om\setminus (\cup_i \E(i) )$ instead of $\R^n\setminus \bigcup_i\E(i)$ as usual (that is because the ambient space is $\Om$ in place of $\R^n$). As we are pointing out below, also the definition of "singular set of a Cheeger $N$-cluster" must be given in a slightly different way (see \eqref{insieme singolare 1}) from the standard one $\pa \E \setminus \pared \E$, since this last set turns out to be too small. Let us postpone this discussion below to Section \ref{cpt 4 sct 1 sbsct 1} where precise statements are given, and let us, instead, briefly focus on the asymptotic properties of $H_N$ (to which Subsection \ref{asymptoyc of HN} is devoted).\\

We note that for $H_N$ it is reasonable to expect a behavior of the type 
\begin{equation}\label{come va}
H_N(\Om)= C(\Om) N^{\frac{3}{2}}+o(N^{\frac{3}{2}}), 
\end{equation}
for some constant $C(\Om)$. In Theorem \ref{asymptotic behavior} (Property 3) ) we provide some asymptotic estimate for $H_N$ showing that the exponent $\frac{3}{2}$ in \eqref{come va} is the correct one and proving that for any given bounded open set $\Om\subset \R^2$ it holds
\begin{equation}\label{asintotico in N per HN}
\frac{h(B)\sqrt{\pi}}{\sqrt{|\Om|}}\leq\liminf_{N\rightarrow +\infty} \frac{H_N(\Om)}{N^{\frac{3}{2}}}\leq  \limsup_{N\rightarrow +\infty} \frac{H_N(\Om)}{N^{\frac{3}{2}}}\leq  \frac{h(H)}{\sqrt{|\Om|}},
\end{equation}
We here conjecture that 
$$C(\Om)=\frac{h(H)}{\sqrt{|\Om|}},$$ 
which is nothing more than Caffarelli and Lin's conjecture for the case $p=1$. Note that, thanks to \eqref{Lp lowerbound} this would imply
\begin{equation}
\Lambda_N^{(2)}(\Om)\geq \frac{N^{2}}{|\Om|} \left(\frac{h(H)}{2}\right)^{2}+o(N^2),
\end{equation}
a "weak" version of Caffarelli and Lin's conjecture.  It seems coherent and natural to expect this kind of behavior for $H_N(\Om)$.\\

The paper is organized as follows. In Section \ref{cpt 4 sct 1 sbsct 1} we present and comment the three main statements describing the regularity property and the structure of Cheeger $N$-clusters. Sections \ref{cpt 4 sct Existence and regularity}, \ref{cpt 4 sct The singular set of the Cheeger N-clusters in low dimension} and \ref{cpt 4 the planar case} are devoted to the proof of the Theorems introduced in Section \ref{cpt 4 sct 1 sbsct 1}. In the final section \ref{limite} we show the connection between $H_N$ and $\La^{(p)}_N$ and we establish the asymptotic trend of $H_N$ (for $N$ sufficiently large) in the planar case.\\

{\bf \noindent Acknowledgments.} The author is grateful to professor Giovanni Alberti for his useful comments and for the useful discussions about this subject. Thanks to Gian Paolo Leonardi and Aldo Pratelli for have carefully reading this work as a part of the Ph. D thesis of the author. The author is also grateful to Enea Parini for the useful comments about Proposition \ref{ognicameraconfinaconilvuoto}. The work of the author was partially supported by the project 2010A2TFX2-Calcolo delle Variazioni, funded by\textit{ the Italian Ministry of Research and University}. 

\section{Basic definitions and regularity theorems for Cheeger $N$-clusters }\label{cpt 4 sct 1 sbsct 1}

We present three statements that we are going to prove in Section \ref{cpt 4 sct Existence and regularity} and in Subsections \ref{cpt 4 sbsct Proof of mainthm2} and \ref{cpt 4 sbsct proof of mainthm3}. \\

In Section \ref{cpt 4 sct Existence and regularity} after we have shown existence of Cheeger $N$-clusters for any given \textit{bounded} ambient space $\Om$ with finite perimeter (Theorem \ref{existence}) we provide the partial regularity Theorem \ref{mainthm1}. Set, for a generic Borel set $F$ and for $i=1,\ldots,N$
\begin{align}
\S(\E(i);F)&:=[\pa \E(i)\setminus \pared\E(i)]\cap F\label{insieme singolare i},\\
\S(\E(i))&:=\S(\E(i);\R^n).\label{insieme singolare ii}
\end{align}
\begin{theorem}\label{mainthm1}
Let $n\geq 1, N\geq 2$. Let $\Om\subset \R^n$ be an open bounded set with finite perimeter and $\E$ be a Cheeger $N$-cluster of $\Om$.
Then for every $i=1,\ldots,N$ the following statements hold true: 
\begin{itemize}
\item[(i)] For every $\a\in \left(0,1\right)$ the set $\Om \cap \pared \E(i)$ is a $C^{1,\a}$-hypersurface that is relatively open in $\Om\cap \pa\E(i)$ and it is $\H^{n-1}$ equivalent to $\Om \cap \pared \E(i)$; 
\item[(ii)] For every $i=1,\ldots,N$ the set $\pa \E(i) \cap \Om$ can meet $\pared \Om$ only in a tangential way, that is: $\pa^*\Om \cap \pa \E(i) \subseteq \partial^*\E(i)$. Moreover for every $x\in \pa^*\Om \cap \pa\E(i)$ it holds:
$$\nu_{\E(i)}(x)=\nu_{\Om}(x).$$
Here $\nu_{\E(i)}$, $\nu_{\Om}$ denote, respectively, the measure theoretic outer unit normal to $\E(j)$ and to $\Om$;
\item[(iii)]  $\S(\E(i);\Om)$ is empty if $n\leq 7$;
\item[(iv)] $\S(\E(i);\Om)$ is discrete if $n=8$;
\item[(v)] if $n\geq 9$, then $\H^{s}(\S(\E(i);\Om))=0$ for every $s>n-8$.
\end{itemize}  
\end{theorem}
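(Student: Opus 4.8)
The plan is to establish that each Cheeger $N$-cluster is, near points of $\Om$, a $(\Lambda,r_0)$-perimeter-minimizing configuration and then to invoke the standard regularity theory for such minimizers together with De Giorgi's monotonicity/dimension-reduction machinery. First I would show that if $\E$ is a Cheeger $N$-cluster of $\Om$ and $x\in\Om$, then for a sufficiently small ball $B(x,r)\cc\Om$ the chamber $\E(i)$ is a $(\Lambda,r_0)$-perimeter-minimizing set in $B(x,r)$ in the sense of Definition \ref{Lambdarminimi}. The argument is a competitor/volume-fixing one: given any $F$ with $F\triangle\E(i)\cc B(x,r)$, I replace $\E(i)$ by $F$ and redistribute the tiny volume difference $|F|-|\E(i)|$ by slightly dilating/contracting another chamber (or $\E(i)$ itself in a region away from $x$), exactly as in the classical proof for Cheeger sets; the competitor is still an admissible $N$-cluster inside $\Om$, so its energy $\sum_j P(\E(j))/|\E(j)|$ is no smaller, and comparing the two energies yields an estimate $P(\E(i);B(x,r))\le P(F;B(x,r))+\Lambda|F\triangle\E(i)|$ for a constant $\Lambda$ depending on the cluster. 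The only subtlety is making sure the volume-adjustment chamber has positive measure and smooth-enough boundary locally; this is where property (a) of an $N$-cluster and a covering argument are used, and one may need to shrink $r_0$ depending on $\E$.

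Once the $(\Lambda,r_0)$-minimality is in hand, parts (i) and (iii)–(v) are consequences of the regularity theory for $(\Lambda,r_0)$-perimeter minimizers (see \cite[Chapters 21, 26–28]{maggibook}): the reduced boundary $\pared\E(i)\cap\Om$ is a $C^{1,\alpha}$ hypersurface for every $\alpha\in(0,1)$ (indeed $C^{1,\gamma}$ for $\gamma$ up to $1$ with the Hölder exponent coming from the $\Lambda r$ term, but any $\alpha<1$ suffices), it is relatively open in $\pa\E(i)\cap\Om$, and it is $\H^{n-1}$-equivalent to the topological boundary restricted to $\Om$ since the essential boundary and the reduced boundary coincide up to an $\H^{n-1}$-null set. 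For the singular set $\S(\E(i);\Om)=[\pa\E(i)\setminus\pared\E(i)]\cap\Om$, De Giorgi's dimension-reduction argument, applied to blow-up limits which are perimeter-minimizing cones (the $\Lambda r$ term vanishes in the blow-up), gives $\dim_{\H}\S(\E(i);\Om)\le n-8$ by Simons' theorem, hence emptiness for $n\le 7$, discreteness for $n=8$, and $\H^s=0$ for $s>n-8$ when $n\ge 9$; this is item (iii)–(v) verbatim.

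Part (ii), the tangential-contact statement, requires a separate argument because points of $\pared\Om$ are boundary points of the ambient space, not interior points, so the interior regularity theory does not directly apply. Here I would argue by contradiction: suppose $x\in\pa^*\Om\cap\pa\E(i)$ and $\nu_{\E(i)}(x)\ne\nu_\Om(x)$ (or $x\notin\pared\E(i)$). Blow up the pair $(\E(i),\Om)$ at $x$; since $\E(i)\subseteq\Om$ one gets, in the limit, a half-space (the blow-up of $\Om$ at a reduced-boundary point) containing the blow-up of $\E(i)$, and the latter must itself be perimeter-minimizing among sets contained in that half-space with prescribed volume. A calibration/reflection argument then forces the blow-up of $\E(i)$ to be either empty or the whole half-space, and in the nontrivial case its boundary must coincide with the hyperplane $\pa$(half-space), which gives $\nu_{\E(i)}(x)=\nu_\Om(x)$ and $x\in\pared\E(i)$. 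The key input is again that admissible competitors must stay inside $\Om$, which is what produces the half-space constraint; making the blow-up compactness rigorous at a boundary point of $\Om$ (rather than of $\E(i)$) is the most delicate point, and one should quote the relevant density estimates up to $\pared\Om$.

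The main obstacle I anticipate is the first step: rigorously producing the $(\Lambda,r_0)$-minimality with a uniform $\Lambda$ and a genuinely positive $r_0$, because the volume-fixing perturbation must be performed simultaneously with the competitor substitution while keeping the disjointness condition (b) and the containment in $\Om$; once that technical lemma is settled, the rest follows from citing \cite{maggibook}.
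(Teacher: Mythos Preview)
Your overall strategy---establish $(\Lambda,r_0)$-perimeter-minimality for each chamber inside $\Om$ and then invoke the standard regularity package---is exactly the paper's, and your blow-up argument for (ii) is essentially the content of the Proposition the paper quotes from \cite{LP14} (applied to $\E(i)$ as a Cheeger set of $A=\Om\setminus\bigcup_{j\ne i}\E(j)$). The gap is in how you propose to obtain the $(\Lambda,r_0)$-minimality. The difficulty in passing from an arbitrary competitor $F$ with $F\triangle\E(i)\cc B_r$ to an admissible $N$-cluster is not a volume issue at all: the functional $\sum_j P(\E(j))/|\E(j)|$ carries no volume constraint, and even for a single Cheeger set no volume-fixing is needed. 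The real obstruction is the \emph{disjointness} constraint---$F$ may overlap the other chambers $\E(j)$, $j\ne i$---and ``dilating/contracting another chamber away from $x$'' does nothing to cure that overlap. You flag this correctly at the end, but you offer no mechanism to resolve it, so the first step as written does not go through.

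The paper's mechanism is an obstacle-type argument borrowed from \cite{BaMa82}. One first shows that the obstacle $M_i=\bigcup_{j\ne i}\E(j)$ has distributional mean curvature bounded above by $H_N(\Om)$ in $\Om$: for every $L\subseteq M_i$ with $M_i\setminus L\cc B_r$ one has $P(M_i;B_r)\le P(L;B_r)+H_N(\Om)\,|M_i\setminus L|$. This comes from testing each $\E(j)$, $j\ne i$, against the \emph{inward} competitor $\E(j)\cap L$ (admissible because it sits in the same slot of the cluster) and summing via Lemma~\ref{tecnico}. With this bound in hand, given $F$ one uses the admissible replacement $E:=F\setminus M_i$ for $\E(i)$; comparing energies yields $P(\E(i);B_r)\le P(F\setminus M_i;B_r)+2h(\E(i))\,|\E(i)\triangle F|$, and then the set-subtraction inequality \eqref{diffe} together with the curvature bound (applied with $L=M_i\setminus F$) controls $P(F\setminus M_i;B_r)-P(F;B_r)$ by $H_N(\Om)\,|F\setminus\E(i)|$. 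One arrives at $\Lambda=3H_N(\Om)$ and $r_0=\tfrac{1}{4H_N(\Om)}$. The conceptual point you are missing is that the other chambers are only ever \emph{shrunk} (intersected with $L$ or with $F^c$), never inflated, so the extra perimeter they contribute is absorbed by the curvature bound rather than by any volume-fixing device.
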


For proving $(i),(iii),(iv)$ and $(v)$ we simply show (in Theorem \ref{regolare}) that each chamber $\E(i)$ is a $(\Lambda,r_0)$-perimeter-minimizing in $\Om$ (see Definition \ref{Lambdarminimi} below) and then we make use of the De Giorgi's regularity Theorem \ref{regularity}, retrieved below for the sake of completeness. We re-adapt an idea from \cite{BaMa82} based on the fact that a solution of an obstacle problem having bounded distributional mean curvature is regular. Assertion $(ii)$ follows as a consequence of \cite[Proposition 2.5, Assertion  (vii)]{LP14} retrieved below (Proposition \ref{leo}).

\begin{remark}
\rm{ We need to ask that $\Om$ is bounded otherwise no Cheeger $N$-clusters are attained. Indeed if $\Om$ is unbounded, by intersecting $\Om$ with $N$ suitable disjoint balls of radius approaching $+\infty$ we easily obtain $H_N(\Om)=0$.}
\end{remark}

\subsection{The role of the singular set $\S(\E)$}

Note that Theorem \ref{mainthm1} yields the inner regularity of \textit{all} the chambers, differently from the Theorems appearing in literature about regularity of isoperimetric $N$-clusters (see for example \cite[Corollary 4.6]{CiLeMaIC1}, \cite[Chapter 30]{maggibook}) that usually involves the topological boundary and the reduced boundary of the whole cluster 
\begin{align*}
\pa \E:=\bigcup_{i=1}^{N}\pa \E(i), \ \ \ \ \ \pared \E:=\bigcup_{0\leq h<k\leq N}^{N}\pared\E(h)\cap \pared \E(k).
\end{align*}
Usually the singular set of an $N$-cluster $\E$ is defined just as $(\pa \E \setminus \pared \E)\cap \Om$ and all the regularity results for these kind of objects involve this definition of singular sets. The stronger regularity of the chambers given by Theorem \ref{mainthm1} somehow affect the behavior of the singular set.  For example consider the case $n\leq 7$. In this case, according to Theorem \ref{mainthm1}, for a Cheeger $N$-cluster it must hold that 
$$(\pa \E \setminus \pared \E)\cap \Om=\emptyset,$$
and this would lead us to say that the singular set of a Cheeger $N$-cluster is empty which is clearly not the case. Indeed let us highlights that there is somehow an "hidden chamber" that plays a key role and influences the behavior of the global structure of these objects, namely the \textit{external chamber}:
$$\E(0)=\Om\setminus \left(\bigcup_{i=1}^N \E(i)\right).$$

Even if Theorem \ref{mainthm1} provides a satisfactory description of $\Sigma(\E(i),\Omega)$, this does not exhaust the analysis of the singular set of $\E$. Indeed the chamber $\E(0)$ is not regular after all and there are points in $\pa \E(0)$ of cuspidal type. For a complete description of the singularity, the correct definition of \textit{singular set of a Cheeger $N$-cluster $\E$ in the Borel set $F$} must be given as
\begin{align}
\S(\E;F)&:=\S(\E(0);F) \cup \bigcup_{i=1}^N \S(\E(i);F),\label{insieme singolare 1}
\end{align}
where for $i\neq 0$ the set $\S(\E(i))$ are the ones defined in \eqref{insieme singolare i} and \eqref{insieme singolare ii}, while for $i=0$ we clearly set
	\[
	\S(\E(0);F)=[\pa \E(0)\setminus \pared\E(0) ]\cap F.
	\]
With these definitions, $(\pa \E\setminus \pared \E)\cap \Om \subseteq \S(\E;\Om)$. Since Theorem \ref{mainthm1} do not provides information about $\S(\E(0))$, we focus our attention on it in Subsection \ref{cpt 4 sbsct Proof of mainthm2} where the following theorem is proved. 
\begin{theorem}\label{mainthm2}
Let $1\leq n\leq 7, N\geq 2$, Let $\Om\subset \R^n$ be an open, connected, bounded set with $C^1$ boundary and finite perimeter and $\E$ be a Cheeger $N$-cluster of $\Om$. Then the following statements hold true.
\begin{itemize}
\item[(i)] $\E(0)$ is not empty and $\H^{n-1}(\pa\E(0)\cap \pa\E(j))>0$ for all $i=1,\ldots,N$;
\item[(ii)]  $\S(\E(0);\Om)=\pa\E(0)\cap \E(0)^{\zero}$, $\S(\E(0);\Om)$ is closed and 
\begin{eqnarray}
\S(\E(0);\Om)  &=&\Om \cap \bigcup_{\substack{j,k=1, \\ k\neq j}}^N  (\pa \E(j)\cap \pa \E(k) \cap \pa \E(0) ) 
\end{eqnarray}
\end{itemize}
\end{theorem}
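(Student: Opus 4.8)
The plan is to first establish that the external chamber $\E(0)$ is nontrivial and then to analyze the fine structure of $\pa\E(0)$ using the regularity of the inner chambers from Theorem \ref{mainthm1} together with the density estimates for $(\Lambda,r_0)$-perimeter minimizers.

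\emph{Step 1: $\E(0)\neq\emptyset$.} Here I would argue by contradiction. Suppose $|\E(0)|=0$, so that $\Om$ is partitioned (up to null sets) by the chambers $\E(1),\ldots,\E(N)$. Since $\Om$ is connected with $C^1$ boundary, one can exploit the fact that a $C^{1,\a}$ regular boundary portion $\Om\cap\pared\E(i)$ is a constant mean curvature hypersurface, and that by $(ii)$ of Theorem \ref{mainthm1} each $\pared\E(i)$ meets $\pared\Om$ tangentially. The idea is that if there were no external chamber, one could slightly dilate one of the chambers (say the one with largest $P(\E(i))/|\E(i)|$) and shrink another, or rescale the whole configuration, to strictly decrease $\sum_i P(\E(i))/|\E(i)|$; this contradicts minimality. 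Concretely, a clean route is to use the scaling/comparison argument: since $\sum_i P(\E(i))/|\E(i)|$ with the volume constraint $\sum_i|\E(i)|=|\Om|$ is not stationary under volume-preserving deformations unless all chambers are balls meeting $\pa\Om$ tangentially, and $N\geq 2$ disjoint balls cannot tile a connected $C^1$ domain, we get a contradiction. I expect Proposition \ref{ognicameraconfinaconilvuoto} (alluded to in the acknowledgments) is exactly the statement that each chamber shares boundary with the void; granting its analogue here, $\H^{n-1}(\pa\E(0)\cap\pa\E(j))>0$ for every $j$ follows because $\E(0)$ having positive measure and each $\E(j)$ being a $(\Lambda,r_0)$-minimizer forces a genuine common reduced boundary (otherwise $\E(j)$ would be Cheeger-isoperimetric ignoring $\E(0)$, again contradicting the first inequality in \eqref{asintotico in N per HN}-type bounds, or simply contradicting that $\E(j)\cc\Om$ would then be a local isoperimetric region bounded only by other chambers).

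\emph{Step 2: the singular set is $\pa\E(0)\cap\E(0)^{(0)}$.} For $(ii)$, the inclusion $\S(\E(0);\Om)=[\pa\E(0)\setminus\pared\E(0)]\cap\Om\supseteq(\pa\E(0)\cap\E(0)^{(0)})\cap\Om$ is immediate since density-zero points are never in the reduced boundary. For the reverse inclusion I would use the standard density dichotomy for sets of finite perimeter combined with regularity: at any $x\in\pa\E(0)\cap\Om$ with $0<\theta_*(\E(0),x)$, either $x\in\pared\E(0)$ (a regular point, excluded), or $x$ is a point where $\E(0)$ has density in $(0,1)$ but no approximate tangent plane. The key observation is that such a point must be a meeting point of at least two inner chambers $\E(j),\E(k)$: locally, $\Om$ near $x$ is split among $\E(0)$ and some $\E(i)$'s, each $\E(i)$ is smooth ($C^{1,\a}$) by Theorem \ref{mainthm1}, so $\E(0)$ locally equals $\Om\setminus\bigcup\E(i)$ with smooth pieces, and the only way $\E(0)$ fails to be smooth at $x$ with positive lower density is that two smooth chambers $\pa\E(j),\pa\E(k)$ come together there. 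This is where the identity
\begin{equation*}
\S(\E(0);\Om)=\Om\cap\bigcup_{\substack{j,k=1\\ k\neq j}}^{N}(\pa\E(j)\cap\pa\E(k)\cap\pa\E(0))
\end{equation*}
comes from, and the density-zero characterization $\S(\E(0);\Om)=\pa\E(0)\cap\E(0)^{(0)}$ follows because at such a three-chamber (two inner + void) junction the void occupies a cuspidal region of density zero — the two smooth hypersurfaces $\pa\E(j),\pa\E(k)$ are tangent there (they cannot cross transversally, since on each side of $\pa\E(j)$ away from $x$ one side is $\E(j)$ and $\pa\E(j)$ is locally a graph, leaving only a tangential cusp for $\E(0)$).

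\emph{Step 3: closedness.} Closedness of $\S(\E(0);\Om)$ in $\Om$ I would get from the characterization in Step 2: the set of density-one points $\E(0)^{(1)}$ is open relative to where $\E(0)$ has a regular boundary, and more directly, $\pared\E(0)\cap\Om$ is relatively open in $\pa\E(0)\cap\Om$ by the De Giorgi regularity theorem applied to $\E(0)$ viewed as a $(\Lambda,r_0)$-minimizer (Theorem \ref{mainthm1} machinery applies to $\E(0)$ too, since $\E(0)=\Om\setminus\bigcup_i\E(i)$ inherits the almost-minimality), hence its complement $\S(\E(0);\Om)$ is relatively closed in $\pa\E(0)\cap\Om$, which is closed in $\Om$.

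\textbf{Main obstacle.} The hardest part is Step 1 — ruling out $\E(0)=\emptyset$ — and extracting the \emph{tangential} (cuspidal, density-zero) nature of the junctions in Step 2. For the latter, the delicate point is excluding transversal triple junctions of the type seen in isoperimetric clusters (the $120^\circ$ Plateau borders): one must use that the \emph{void} $\E(0)$ carries \emph{no} perimeter cost in the functional, so any configuration with a genuine three-phase transversal junction involving $\E(0)$ could be strictly improved by opening up the void (a local comparison/calibration argument), forcing the two inner sheets to be mutually tangent along their common boundary with $\E(0)$. Making this local competitor argument rigorous, uniformly near the junction, is the technical crux.
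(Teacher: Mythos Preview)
There are two concrete problems.

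\textbf{Step 3 rests on a false claim.} You assert that $\E(0)$ ``inherits the almost-minimality'' and is itself a $(\Lambda,r_0)$-perimeter-minimizer in $\Om$, so that De Giorgi's theorem gives $\pared\E(0)\cap\Om$ relatively open in $\pa\E(0)\cap\Om$. This is exactly what fails: the functional $\sum_i P(\E(i))/|\E(i)|$ puts \emph{no} cost on the boundary of $\E(0)$, and indeed the whole content of the theorem is that $\Sigma(\E(0);\Om)$ is typically nonempty (cuspidal points). If $\E(0)$ were $(\Lambda,r_0)$-minimizing in dimension $n\le 7$, Theorem \ref{regularity} would force $\Sigma(\E(0);\Om)=\emptyset$. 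The paper gets closedness not from regularity of $\E(0)$ but from the triple-junction characterization: one passes to a subsequence lying in a fixed $\pa\E(i)\cap\pa\E(j)\cap\pa\E(0)$, uses that boundaries are closed, and rules out the limit landing on $\pa\Om$ by the density argument (three disjoint sets cannot all have density $1/2$ at one point).

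\textbf{Your ``main obstacle'' is not an obstacle.} You worry about excluding transversal $120^\circ$-type junctions and propose a local competitor argument. This is unnecessary: Theorem \ref{mainthm1} already gives that each \emph{inner} chamber $\E(j)$, $j\ge 1$, is individually $(\Lambda,r_0)$-minimizing, hence (with the $C^1$ assumption on $\pa\Om$) $\pa\E(j)=\pared\E(j)$ and every $x\in\pa\E(j)$ has density exactly $1/2$. So if $x\in\pa\E(j)\cap\pa\E(k)\cap\Om$ with $j\neq k$, $j,k\ge 1$, then $\vartheta_n(x,\E(j))+\vartheta_n(x,\E(k))=1$ already, forcing $\vartheta_n(x,\E(0))=0$ automatically. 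The tangentiality of the two sheets is a \emph{consequence} of this density computation, not something one has to establish first. This is how the paper gets both inclusions in Step 2 cleanly, via a partition of $\Om$ into interiors, two-sheet interfaces, and triple points.

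Finally, your Step 1 is a sketch rather than an argument: the claim that stationarity under volume-preserving deformations forces the chambers to be balls is not correct (they are constant-mean-curvature on their free boundary, not balls), and you explicitly ``grant'' the key proposition. The paper's route is different and genuinely delicate: it shows $\pa\E(1)\setminus\bigl[\pa\Om\cup\bigcup_{k\ge 2}\pa\E(k)\bigr]\neq\emptyset$ by a topological argument on a connected component $M$ of $\pa\E(1)$, repeatedly using that no point can have density $1/2$ for three disjoint sets, and finishing with a sliding argument that produces a forbidden density-zero point.
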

Here $\E(0)^{(0)}$ denotes the collection of the \textit{zero $n$-dimensional density points of $\E(0)$}. In general in the sequel we are adopting the notation $E^{(t)}$ by meaning \textit{the collection of points of $\R^n$ where the $n$-dimensional density of the set $E$ exists and it is equal to $t$.}

\begin{remark}\label{controesempioconne}
\rm{
Note that Assertion $(ii)$ of Theorem \ref{mainthm2}, stated as above, would be meaningless if we do not ensure that $|\E(0)|>0$ (that is Assertion $(i)$, proved in Proposition \ref{ognicameraconfinaconilvuoto}). The assumption on $\Om$ to be connected and with $C^1$-boundary are the necessary ones to ensure the validity of this fact. Probably, the theorem remains true also by replacing \textit{$C^1$ boundary} with \textit{Lipschitz boundary}. Anyway we prefer to state and prove it by taking advantage of this stronger regularity on $\pa \Om$ in order to avoid some technicality. Let us also point out that there are situations where $\Om$ is not connected or $\pa \Om$ is not Lipschitz and where $\E(0)$ turns out to be empty. For example, given a set $\Om$ and one of its Cheeger $N$-cluster $\E$, we provide a counterexample by defining the new open set 
$$\Om_0=\left(\bigcup_{j=1}^N \mathring{\E(j)}\right).$$
The $N$-cluster $\E$ will be a Cheeger $N$-clusters of $\Om_0$ also and, by construction, $|\E(0)|=0$ (see Figure \ref{controesempio}).  The reason is that $\Om_0$ has no regular boundary.  As a further example one may also consider the case when $\Om$ is the union of $N$ disjoint balls. Anyway, it is reasonable to expect that, no matter what kind of ambient space $\Om$ we choose, for $N$ sufficiently large the chamber $\E(0)$ will be not empty.}
\end{remark}
\begin{figure}
\begin{center}
 \includegraphics[scale=1]{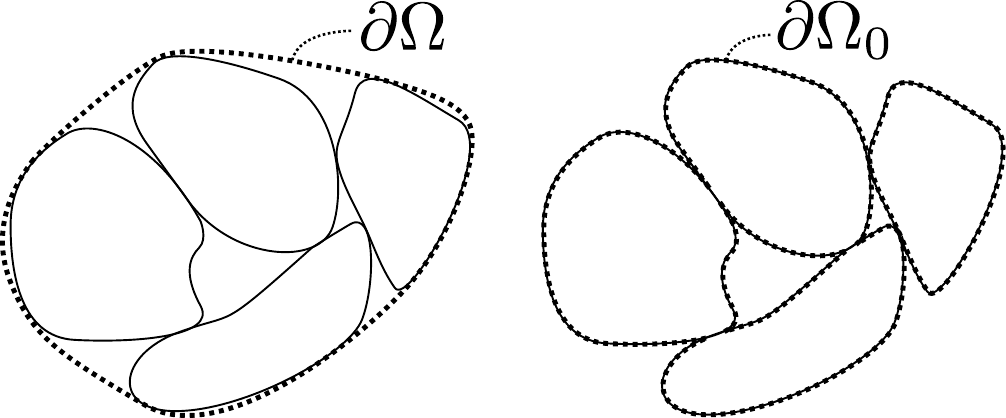}\caption{{\small The set $\Om_0$ built as the union of the interior of 
 the Cheeger $N$-cluster of an open set $\Om$. The external chamber of this Cheeger $N$-cluster of $\Om_0$ is empty because of the 
 cusps at the boundary of the open set.}}\label{controesempio}
 \end{center}
\end{figure}
\begin{remark}
\rm{Note that we ask for the dimension $n$ to be less than $7$. That is because, to prove Theorem \ref{mainthm2}, we exploit the regularity given by \ref{mainthm1} and we prefer to deal with the favorable case $n\leq 7$ where the singular set $\S(\E(i);\Om)=\emptyset$ for $i\neq 0$. Let us also point out that Assertion $(ii)$ remains true also in dimension bigger than $7$ up to replace $\Om$ with $\Om_0=\Om\setminus \cup_{i\neq 0} \S(\E(i);\Om)$. The interesting and not-trivial fact is that we actually do not know if assertion (i) remains true in dimension bigger than $7$ since, in the proof of Proposition \ref{ognicameraconfinaconilvuoto} (the crucial one in order to prove assertion $(i)$), we make a strong use of the fact $\S(\E(i);\Om)=\emptyset$. Roughly speaking in dimension bigger than $7$  it could happen that the chambers, by taking advantage of the possible presence of singular points $x\in \S(\E(i);\Om)$, can be combined in a way that kill $\E(0)$ even under a strong regularity assumption on $\Om$.}
\end{remark}

\begin{remark}\label{the singular set}
\rm{Somehow assertion $(ii)$ of Theorem \ref{mainthm2} is saying that the only singular points of $\E$ are the one where a cusp is attained.  Now we can give a complete description of the singular set $\S(\E;\Om)$ of a Cheeger $N$-cluster of an open, bounded, connected set $\Om$ with finite perimeter and $C^1$ boundary in dimension less than or equal to $7$. By combining Assertion $(iii)$ in Theorem \ref{mainthm1} and assertion $(i)$ in \ref{mainthm2} we can write
$$\S(\E;\Om)=\S(\E(0);\Om)=(\pa \E(0)\cap \E(0)^{(0)})\cap \Om.$$
}
\end{remark}

\subsection{The planar case}
Theorem \ref{mainthm2} gives us a precise structure of $\S(\E;\Om)$. We do not focus here on the singular set $\S(\E;\pa\Om)$ anyway, by exploiting the $C^1$-regularity assumption on $\pa \Om$, it is possible to prove a result in the spirit of Theorem \ref{mainthm2} also for the singular set $\S(\E(0);\pa \Om)$ (and thus characterize $\S(\E;\pa\Om)$). Let us point out that, at the present, the crucial information $\H^{n-1}(\S(\E(0);\Om))=0$ is missing. We are able to fill this gap when the ambient space dimension is $n=2$, together with some remarkable facts stated in the following theorem (proved in Subsection \ref{cpt 4 sbsct proof of mainthm3}).  
\begin{theorem}\label{mainthm3}
Let $n=2, N\geq 2$. Let $\Om\subset \R^2$ be an open, connected, bounded set with $C^1$ boundary and finite perimeter and $\E$ be a Cheeger $N$-cluster of $\Om$. Then the following statements hold true.
\begin{itemize}
\item[(i)] The singular set $\S(\E(0);\Om)$ is a finite union of points $\{x_j\}_{j=1}^k \subset \Om$. 
\item[(ii)] For every $j,k=0,\ldots,N$, $k \neq j$ the set
$$E_{j,k}:=[\pa \E(j)\cap \pa \E(k) \cap\Om]\setminus \S(\E(0);\Om)$$ 
is relatively open in $\pa \E(j)$ ($\pa \E(k)$) and is the finite union of segments and circular arcs. Moreover the set $\E(j)$ has constant curvature $C_{j,k}$ inside each open set $A$ such that $A\cap \pa \E(j) \subseteq E_{j,k}$. The constant $C_{j,k}$ is equal to:
\begin{equation}
C_{j,k}= \left\{
\begin{array}{cc}
\frac{|\E(k)|h(\E(j))-|\E(j)|h(\E(k))}{|\E(j)|+|\E(k)|}, & \text{if $k\ne 0$},\\
 & \\
h(\E(j)), & \text{if $k=0$}.
\end{array}
\right.
\end{equation}
As a consequence the set $\E(k)$ has constant curvature $C_{j,k}=-C_{k,j}$ inside each open set $A$ such that $A\cap \pa\E(k) \subseteq E_{k,j}$ ($=E_{j,k}$);
\item[(iii)] Each chamber $\E(j)\cc \Om$ is indecomposable.
\end{itemize} 
\end{theorem}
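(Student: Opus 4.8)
I would organize the proof around the two-dimensional structure of the perimeter-minimality already established in Theorem \ref{mainthm1}, together with the characterization of the singular set in Theorem \ref{mainthm2}. For assertion (i): in dimension $n=2$, Theorem \ref{mainthm1}(iii) tells us $\S(\E(i);\Om)=\emptyset$ for $i\ne 0$, so by Theorem \ref{mainthm2}(ii) we have $\S(\E;\Om)=\S(\E(0);\Om)=\Om\cap\bigcup_{j\ne k}(\pa\E(j)\cap\pa\E(k)\cap\pa\E(0))$, a \emph{closed} set. To show it is finite, I would argue that each such point is a point where at least three of the (finitely many) regular arcs $\pared\E(j)$ meet: near any $x\in\S(\E(0);\Om)$, since $x$ has density zero for $\E(0)$, a small disk $B_r(x)$ is, up to negligible sets, covered by the chambers $\E(i)$, $i\ge 1$, each of which is a $(\La,r_0)$-minimizer with $C^{1,\a}$ reduced boundary; so the blow-up at $x$ is a cone in $\R^2$ whose only possibilities (by the classification of area-minimizing cones / the structure of planar minimizing clusters, cf.\ the $120^\circ$-type analysis) are a finite union of half-lines through the origin. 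A standard monotonicity/density-lower-bound argument then shows $\S(\E(0);\Om)$ has no accumulation point in $\Om$, hence is finite since it is closed and $\Om$ bounded. This is the step I expect to carry the real weight; the main obstacle is ruling out a sequence of singular points converging to a point of $\pa\Om$ or accumulating along an arc, for which I would invoke the uniform density estimates for $(\La,r_0)$-minimizers together with the fact that away from $\S(\E(0);\Om)$ each interface is a $C^{1,\a}$ arc meeting only one other chamber.

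For assertion (ii): fix $j\ne k$ and set $E_{j,k}=[\pa\E(j)\cap\pa\E(k)\cap\Om]\setminus\S(\E(0);\Om)$. Away from the finite singular set, each point of $\pa\E(j)\cap\Om$ lies in $\pared\E(j)$ and, by Theorem \ref{mainthm2}(ii) in the case $\E(0)$ is involved and by a local comparison otherwise, in the reduced boundary of exactly one other chamber; so $E_{j,k}$ is relatively open in $\pa\E(j)$. To get the constant curvature and its value, I would perform the standard first-variation computation: competing against $\E$ by moving an infinitesimal portion of the common interface $E_{j,k}$ into $\E(j)$ and out of $\E(k)$, the cost to the functional $\sum_i P(\E(i))/|\E(i)|$ must have nonnegative first variation in both directions, so it vanishes. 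Writing the variation of $P(\E(i))/|\E(i)|$ as $\int \big(\text{curvature}/|\E(i)| - P(\E(i))/|\E(i)|^2\big)$ against the normal speed and using $h(\E(i))=P(\E(i))/|\E(i)|$ (each chamber being its own Cheeger set — a fact I would verify or cite from the Cheeger-set discussion in the introduction), one gets precisely
\[
\frac{\kappa_{\E(j)}}{|\E(j)|} - h(\E(j))\cdot\frac{1}{|\E(j)|} \;=\; \frac{\kappa_{\E(k)}}{|\E(k)|} - h(\E(k))\cdot\frac{1}{|\E(k)|},
\]
with $\kappa_{\E(k)}=-\kappa_{\E(j)}$ on the shared interface; solving for $\kappa_{\E(j)}=C_{j,k}$ yields the stated formula when $k\ne 0$. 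When $k=0$ there is no $|\E(0)|$-term (perturbing the external chamber does not change any denominator), so the balance reduces to $\kappa_{\E(j)}/|\E(j)|=h(\E(j))/|\E(j)|$, i.e.\ $C_{j,0}=h(\E(j))$. Since $E_{j,k}$ is a one-dimensional $C^{1,\a}$ manifold with locally constant curvature, each connected component is a line segment (curvature $0$) or a circular arc, and there are finitely many components because the endpoints all lie in the finite set $\S(\E(0);\Om)\cup\pa\Om$ and each chamber has finite perimeter.

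For assertion (iii): suppose some $\E(j)\cc\Om$ decomposes as $\E(j)=F_1\cup F_2$ with $|F_1|,|F_2|>0$, $|F_1\cap F_2|=0$ and $P(\E(j))=P(F_1)+P(F_2)$. Then replacing $\E(j)$ by the two pieces and feeding the component with the smaller ratio $P(F_\ell)/|F_\ell|$ into the cluster in place of $\E(j)$ (since $\min_\ell P(F_\ell)/|F_\ell|\le P(\E(j))/|\E(j)|$, with equality only if the two ratios coincide) would not increase the functional; a strict decrease contradicts minimality, and in the equality case a further small perturbation — shrinking one component radially to free up room and using the isoperimetric inequality to strictly lower its ratio, or merging it with an adjacent chamber — produces a strict improvement. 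The cleaner route, which I would prefer to write, is: by Theorem \ref{mainthm2}(i) each chamber shares positive-$\H^1$ boundary with $\E(0)$, and by the curvature formula in (ii) with $k=0$ the part of $\pa\E(j)$ facing $\E(0)$ has curvature $h(\E(j))>0$ pointing inward, forcing (via the characterization of planar sets bounded by constant-curvature arcs, or directly via a connectedness argument on the $(\La,r_0)$-minimizer $\E(j)$) that $\E(j)$ is a single indecomposable region. I expect assertion (iii) to be the least technical once (i) and (ii) are in hand; the subtlety is only in the equality case of the decomposition, handled by the strict isoperimetric deficit.
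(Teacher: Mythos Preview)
Your argument for assertion (ii) is essentially the paper's: the first-variation computation is the same, and the conclusion is correct. The gaps are in (i) and (iii).

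\medskip

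\textbf{Assertion (i).} Your blow-up/cone-classification strategy breaks down because the singular points of a Cheeger $N$-cluster are \emph{cusps}, not triple junctions. By Theorem \ref{mainthm2}(ii), every $x\in\S(\E(0);\Om)$ satisfies $\vartheta_2(x,\E(0))=0$; together with Lemma \ref{densita dei punti di omega} (each $\E(i)$, $i\ne 0$, has density in $\{0,\tfrac12,1\}$) this forces exactly two chambers $\E(j),\E(k)$ with density $\tfrac12$ at $x$, filling the disk. Hence the blow-up at $x$ is a pair of complementary half-planes, i.e.\ a \emph{single line}, and no nontrivial cone ever appears. A density/monotonicity argument therefore cannot distinguish singular points from regular ones and gives no mechanism to rule out accumulation. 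The paper's proof (Proposition \ref{struttura singolaritapiano}) proceeds differently: near any $\xi\in\pa\E(j)\cap\pa\E(k)$ one writes both boundaries as $C^{1,\alpha}$ graphs $f_1\le f_2$ over a small interval, so that $\E(0)\cap Q_\varepsilon$ is the region $\{f_1<y<f_2\}$. The key input you are missing is Lemma \ref{buone componenti connesse}: any indecomposable component of $\E(0)$ compactly contained in $\Om$ must border at least \emph{three} chambers. This kills all the ``interior'' intervals where $f_1<f_2$, leaving at most two singular points in $Q_\varepsilon$, hence no accumulation.

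\medskip

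\textbf{Assertion (iii).} Neither of your routes closes. In your first route the equality case is genuine: if $\E(j)=F_1\cup F_2$ with $P(F_\ell)/|F_\ell|=h(\E(j))$ for both $\ell$, replacing $\E(j)$ by $F_1$ yields another Cheeger $N$-cluster, and ``shrinking one component radially'' or ``merging with an adjacent chamber'' is not a legitimate competitor argument without more structure. Your second route (positive curvature on the free boundary implies connectedness) is simply false in general: a union of two disjoint disks has constant positive curvature everywhere. The paper's argument (Proposition \ref{connessionecamere}) is sharper and uses two ingredients you do not have. First, Lemma \ref{nel piano!}: a planar Cheeger set $E$ of an open set $A$, with finitely many ``contact endpoints'' $\bd_{\pa A}(\pa A\cap\pa E)$ and regular $\pa A$ at contact points, must satisfy $\H^1(\pa E\cap\pa A)>0$; this guarantees that a putative second component $E_2$ of $\E(j)$ shares \emph{positive length} of boundary with some $\E(k)$, $k\ne 0,j$. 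Second, since removing $E_2$ still gives a Cheeger $N$-cluster $\F$, the arc $\pa E_2\cap\pa\E(k)$ must carry curvature $C_{k,j}$ (computed in $\E$) and simultaneously curvature $C_{k,0}=h(\E(k))$ (computed in $\F$, where that arc is now free boundary). Equating these gives $(h(\E(k))+h(\E(j)))|\E(k)|=0$, a contradiction. This curvature-comparison trick is the missing idea.
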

\begin{figure}
\centering
 \includegraphics[scale=3.0]{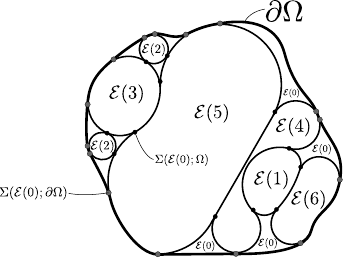}\caption{{\small An example of a possible Cheeger $6$-cluster in dimension $n=2$ suggested by Theorems \ref{mainthm1}, \ref{mainthm2} and \ref{mainthm3}}} \label{ese}
\end{figure}

We refer the reader to \cite[Section 17.3]{maggibook} where the Definition of distributional mean curvature of a finite perimeter set $E$ together with a satisfying treatment of this concept is provided.

\begin{remark}
\rm{
Theorems \ref{mainthm1}, \ref{mainthm2} and \ref{mainthm3} allow us to provide examples of planar Cheeger $N$-cluster. The one depicted in Figure \ref{ese} is a possible Cheeger $6$-clusters. Let us highlight that we do not want to suggest that the object in the figure is exactly the Cheeger $6$-cluster of the set $\Om$. We just want to point out the possible structure of such objects.
}
\end{remark}
\begin{remark}\label{remark controfinitezza}
\rm{Let us notice that Assertion $(i)$ of Theorem \ref{mainthm3} could fail when we replace  $\S(\E(0);\Om)$ with $\S(\E(0);\pa \Om)$. Indeed we can always modify $\Om$ at the boundary in order to produce a set $\Om_0$ having the same Cheeger $N$-clusters of $\Om$ and kissing the boundary of some $\pa \E(i)$ in a countable number of points (see Figure \ref{controfinitezza}). 
\begin{figure}
\begin{center}
 \includegraphics[scale=0.8]{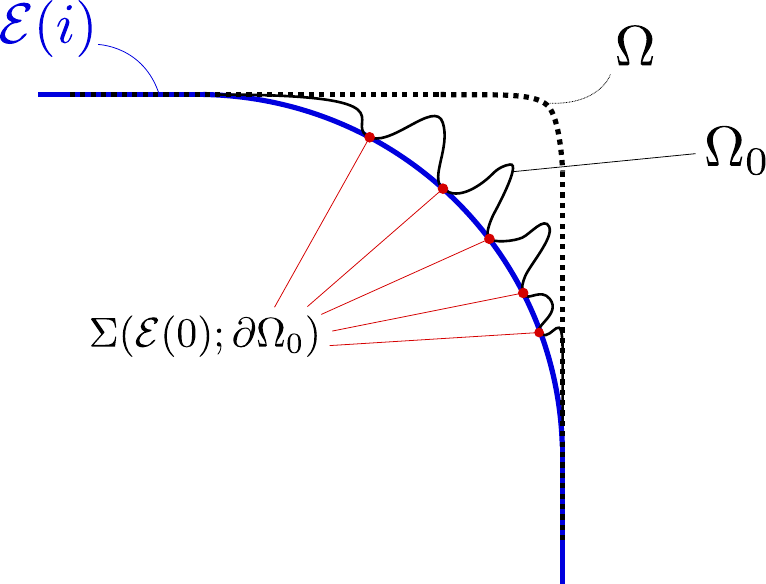}\caption{{\small By gently pushing $\pa \Om$ we can build as many contact points as we want. This proves that Assertion $(i)$ in Theorem \ref{struttura singolaritapiano} does not hold in general for $\S(\E(0);\pa \Om)$.} }\label{controfinitezza}
\end{center}
\end{figure}
}
\end{remark}

\begin{remark}
\rm{We speak of ``curvature of chambers" $\E(j),\E(k)$, instead of curvature of interfaces $\pa\E(j)\cap \pa\E(k)$ in order to point out that the sign of the constant $C_{j,k}$ depends on whether we are looking at $\pa\E(j)\cap \pa\E(k)$ as a piece of the boundary of $\E(j)$ or as a piece of the boundary of $\E(k)$ (namely it depends on the direction of the unit-normal vector to $\pa \E(j)\cap \pa\E(k)$ that we choose).
}
\end{remark}

\begin{remark}
\rm{Note that the set $E_{j,k}$ could be empty. For example, if $\pa\E(j) \cap \pa\E(k)\cap \Om=\{x\}$ consists of a single point, thanks to our characterization (assertion $(ii)$ Theorem \ref{mainthm2}) $x\in \S(\E(0);\Om)$. However, for some $k=0,\ldots, N$, $k\neq j$ it must clearly holds $\H^1 (E_{j,k})>0$. The natural question is whether there exists a chamber $\E(j)$ such that $E_{j,k}=\emptyset$ for all $k\neq 0$. We provide a lemma (Lemma \ref{nel piano!}) that excludes this possibility whenever $\E(j) \cc \Om$ and this will be our starting point for proving assertion $(iii)$ in Theorem \ref{mainthm3}. 
}
\end{remark}

\begin{remark}
\rm{ Since $E_{j,k}$ is relatively open in $\pa \E(j)\cap \pa\E(k)$ we can find an open set $A$ such that $A\cap \pa\E(k)\cap \pa \E(j)=E_{j,k}$ and conclude that $\pa \E(j)$ must have constant mean curvature in $A$ (that is, on $E_{j,k}$). In the sequel we sometimes refer to the distributional mean curvature of $\E(i)$ on $E\subset\pa \E(i)$, a relatively open subset of $\pa\E(i)$,  as the distributional mean curvature of $\E(i)$ inside the open set $A$ such that $A\cap \pa \E(i)=E$. 
}
\end{remark}

\begin{remark}
\rm{Assertions $(i)$ and $(ii)$ in Theorem \ref{mainthm3} tell us that a chamber $\E(j)$ has distributional curvature inside $\Om$ equal to
$$H_{\E(j)}(x)=\sum_{\substack{k=0\\ k,\neq j}}^{N} C_{j,k} \ca_{E_{j,k}}(x), \ \ \ \ \ \text{for $\H^1$-almost every $x\in \pa\E(j)\cap\Om$ }.$$ 
Indeed, since the set $\S(\E(0);\Om)$ is finite, $\pa\E(j)\cap \Om$ is $\H^1$-equivalent to $\bigcup_{k\neq j} E_{j,k}$. In particular, if $T\in C_c^{\infty}(\Om;\R^2)$ then
\begin{align*}
\int_{\pa \E(j)\cap \Om }\dive_{\E(j)}(T)\d \H^{1}(x)&= \sum_{\substack{k=0\\ k,\neq j} }^N \int_{\pa \E(j) \cap E_{j,k}\cap \Om }\dive_{\E(j)}(T)\d \H^{1}(x)\\
&=\sum_{\substack{k=0\\ k,\neq j}}^N \int_{\pa \E(j) \cap E_{j,k}\cap \Om }C_{j,k} (T\cdot \nu_{\E(j)})(x)\d \H^{1}(x)\\
&= \int_{\pa \E(j) \cap \Om } (T \cdot \nu_{\E(j)} )(x) \sum_{\substack{k=0\\ k,\neq j} }^N C_{j,k}\ca_{{E}_{j,k}}(x) \d \H^{1}(x).
\end{align*}
}
\end{remark}

We finally point out that, even if the indecomposability of the chambers is usually an hard task in the tessellation problems, in this case, thanks to a general fact for Cheeger sets (Proposition \ref{nel piano!}), we can easily achieve the proof of Assertion $(iii)$ in Theorem \ref{mainthm3}. This will be particularly useful when focusing our attention on the asymptotic behavior of $H_N$.

\section{Existence and regularity}\label{cpt 4 sct Existence and regularity}

This section is devoted to the proof of Theorem \ref{mainthm1}. We premise the following subsection where some technical facts are recalled.
\subsection{Technical tools}

\begin{definition}[$(\Lambda,r_0)$-perimeter-minimizing inside $\Om$, \cite{maggibook} pp. 278-279]\label{Lambdarminimi}
We say that a set of finite perimeter $E$ is a $(\Lambda,r_0)$-perimeter-minimizing in 
$\Om$ if for every $B_r\subset \Om$ with $r<r_0$ and every set $F$ such that $E\Delta F\subset\subset B_r$, it holds
$$P(E;B_r)\leq P(F;B_r)+\Lambda |E\Delta F|.$$
\end{definition}
The following theorem clarifies why Definition \ref{Lambdarminimi} is so important (see \cite[Chapter 21 and pp. 354, 363-365]{maggibook}).
\begin{theorem}\label{regularity} 
If $\Om$ is an open set in $\R^n$, $n\geq 2$ and $E$ is a $(\Lambda,r_0)$-perimeter-minimizing in $\Om$, with 
$\Lambda r_0\leq 1$, then for every $\a\in(0,1)$ the set $\Om\cap \partial^*E$ is a $C^{1,\a}$ hypersurface that is relatively open in $\Om\cap \partial E$, and it is $\H^{n-1}$ equivalent to $\Om \cap \partial E$. Moreover, setting 
$$\Sigma(E;\Om):=\Om\cap (\partial E\setminus \partial^*E),$$
then the following statements hold true:
\begin{itemize}
 \item[(i)] if $2\leq n\leq 7$, then $\Sigma(E;\Om)$ is empty;
 \item[(ii)] if $n=8$, then $\Sigma(E;\Om)$ is discrete;
 \item[(iii)] if $n\geq 9$, then $\H^s(\Sigma(E;\Om))=0$ for every $s>n-8$. 
\end{itemize}
\end{theorem}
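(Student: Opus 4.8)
The plan is to run the classical De Giorgi--Federer regularity programme for almost-minimizers of the perimeter; the hypothesis $\La r_0\le1$ is precisely what makes the volume term $\La|E\Delta F|$ a genuinely lower-order perturbation---on a ball of radius $\rho$ it contributes $O(\rho^n)$, against a perimeter of order $\rho^{n-1}$---so that all the machinery developed for \emph{perimeter minimizers} carries over with error terms vanishing in every blow-up. Concretely I would use three ingredients: uniform density estimates together with an almost-monotonicity formula; De Giorgi's $\e$-regularity theorem (smallness of the excess at one scale forces local $C^{1,\a}$ regularity of $\pa E$); and Federer's dimension-reduction argument for the singular set, which imports Simons' theorem on minimizing cones.

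\emph{Density estimates, blow-ups, and the $C^{1,\a}$ statement.} For $x\in\pa E$ and $B_\rho(x)\cc\Om$ with $\rho<r_0$, comparing $E$ inside $B_\rho(x)$ with $E\setminus B_\rho(x)$ and with $E\cup B_\rho(x)$, and absorbing the $\La$-term using $\La r_0\le1$, one obtains the standard uniform bounds $c(n)\le\rho^{1-n}P(E;B_\rho(x))\le C(n)$ and $c(n)\le\rho^{-n}|E\cap B_\rho(x)|\le C(n)$. These force $\H^{n-1}$ to be locally finite on $\pa E$ and $\pa E\cap\Om$ to coincide with the essential boundary $\pa^{e}E$; they also yield $L^1_\loc$-compactness of the blow-ups $E_{x,\rho_k}=(E-x)/\rho_k$, whose limits are locally perimeter-minimizing (the $\La$-term has disappeared) and, when $\rho_k\to0$, minimizing \emph{cones} by the almost-monotonicity formula. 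At a point $x\in\pared E$ the only such cone is the half-space with normal $\nu_E(x)$, so the spherical excess of $E$ at $x$ tends to $0$; De Giorgi's $\e$-regularity theorem then gives $\e_0(n)>0$ and $\a_0(n)\in(0,1)$ such that excess below $\e_0$ at scale $\rho$ propagates with geometric decay and forces $\pa E\cap B_{\rho/2}(x)$ to be a $C^{1,\a_0}$ graph; Schauder estimates for the (almost constant) mean-curvature equation bootstrap this to $C^{1,\a}$ for every $\a\in(0,1)$. As the excess is small near every point of $\pared E$ and smallness is an open condition, $\Om\cap\pared E$ is a relatively open $C^{1,\a}$-hypersurface inside $\Om\cap\pa E$; and $\H^{n-1}\big((\Om\cap\pa E)\setminus\pared E\big)=0$, since $\pa E\cap\Om=\pa^{e}E\cap\Om$ and Federer's structure theorem gives $\H^{n-1}(\pa^{e}E\setminus\pared E)=0$.

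\emph{The singular set: assertions $(i)$--$(iii)$.} Write $\S(E;\Om)=\Om\cap(\pa E\setminus\pared E)$; by the previous step, $x\in\S(E;\Om)$ precisely when no blow-up of $E$ at $x$ is a hyperplane. I would then run Federer's dimension-reduction argument---using that blow-ups are minimizing cones, that a blow-up of such a cone at a point of its own singular set is again a minimizing cone that splits off a line, and the compactness above---to bound the Hausdorff dimension of $\S(E;\Om)$ by $n-1-k^{\ast}$, where $k^{\ast}$ is the largest integer admitting a nonplanar perimeter-minimizing cone in $\R^{k^{\ast}}$. Simons' theorem supplies $k^{\ast}=7$: every perimeter-minimizing cone in $\R^{k}$ with $k\le7$ is a hyperplane, while the Simons cone $\{x\in\R^{8}:x_1^2+\cdots+x_4^2=x_5^2+\cdots+x_8^2\}$ is genuinely singular. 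Hence $\S(E;\Om)$ is empty when $n\le7$, giving $(i)$; it is $0$-dimensional, and in fact discrete---by the standard argument excluding accumulation of singular points, since near an accumulation point some blow-up would split off a line and reduce to a singular minimizing cone in $\R^{7}$---when $n=8$, giving $(ii)$; and $\H^{s}(\S(E;\Om))=0$ for every $s>n-8$ when $n\ge9$, giving $(iii)$. Since $n-8<n-1$, this also re-proves the $\H^{n-1}$-negligibility used above.

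\emph{Where the work is.} The two deep inputs are the only real difficulty: De Giorgi's $\e$-regularity theorem, whose proof goes through a compactness--contradiction scheme, a Lipschitz approximation of $\pa E$ at small excess, and comparison with a harmonic replacement (hence, in the end, the regularity theory of the Laplacian); and Simons' theorem, proved via the stability (second-variation) inequality on the cone and a sharp spectral estimate on its link. Everything else---in particular the passage from perimeter minimizers to $(\La,r_0)$-minimizers, which is the only point not already in the standard references---is routine, exactly because $\La r_0\le1$ keeps the perturbation of lower order at every scale.
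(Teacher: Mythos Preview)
The paper does not prove Theorem~\ref{regularity} at all: it is quoted as a known result, with a reference to \cite[Chapter 21 and pp.\ 354, 363--365]{maggibook}. So there is no ``paper's own proof'' to compare against; the theorem is used as a black box, and the paper's contribution lies in showing (Theorem~\ref{regolare}) that each chamber of a Cheeger $N$-cluster is a $(\La,r_0)$-minimizer, so that this classical result applies.

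Your sketch is the standard De Giorgi--Federer--Simons route and is, in outline, exactly what one finds in the cited reference: density estimates and almost-monotonicity, excess decay and $\e$-regularity at points of $\pared E$, then Federer dimension reduction combined with Simons' classification of low-dimensional minimizing cones. One small slip: your definition of $k^\ast$ is inverted. You write that $k^\ast$ is ``the largest integer admitting a nonplanar perimeter-minimizing cone in $\R^{k^\ast}$'' and then set $k^\ast=7$, but Simons' theorem says precisely that \emph{no} nonplanar minimizing cone exists in $\R^k$ for $k\le 7$; moreover nonplanar minimizing cones exist in every dimension $\ge 8$, so the quantity you wrote is not even finite. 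What you want is either the \emph{smallest} dimension in which a singular minimizing cone exists (namely $8$), or equivalently the \emph{largest} dimension in which all minimizing cones are half-spaces (namely $7$); either way the Federer reduction bounds $\dim\S(E;\Om)$ by $n-8$, which is the conclusion you correctly state.
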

In every dimension greater than or equal to $8$ it is possible to exhibit an example of a $(\La, r_0)$-perimeter-minimizing set $E$ with $\H^{n-8}(\S(E))>0$ (see \cite{de2009short}, \cite{bombieri1969minimal}, \cite[Section 28.6]{maggibook}). Hence assertion $(iii)$ cannot be improved and the only thing that we can say is that the Hausdorff dimension of the singular set $\S(E;\Om)$ is at most $n-8$: $\dim(\S(E))\leq n-8$.\\

Before entering in the details of the proof of Theorem \ref{regolare} let us remark that, by exploiting the general structure of sets of finite perimeter (see \cite[Theorems 15.5, 15.9, 16.2, 16.3]{maggibook}), it is possible to derive the inequality
\begin{equation}\label{diffe}
P(F\setminus E; A )+P(E\setminus F;A)\leq P(F;A)+P(E;A)
\end{equation}
holding for every couple of sets $E,F$ of locally finite perimeter and for every open set $A$. \\

In order to prove Theorem \ref{regolare} we also recall the following definition. We say that a set of finite perimeter $M$ has \textit{distributional mean curvature less than $g\in L^1_{loc}(\Om)$ in $\Om$} if, there exists $r_0$ such that for every $B_r\cc \Om$ with $r<r_0$ and for every $L\subseteq M$ with $M\setminus L \cc B_r$, it holds
\begin{equation}
P(M;B_r)\leq P(L;B_r)+\int_{M\setminus L} g(x)\d x.
\end{equation}

We also premise the following technical lemma. Since we were not able to find in literature a proof of this fact we provide also a proof.

\begin{lemma}\label{tecnico}
If $E_1,\ldots,E_k$ are $k$ sets of locally finite perimeter such that 
$$|E_i\cap E_j|=0 \ \ \ \ \forall \ i\neq j,$$
then the following holds:
\begin{equation}\label{spaghetto}
\begin{split}
\pa^* \left(\bigcup_{i=1}^k E_i\right)&\approx \left( \bigcup_{i=1}^k \pa^*E_i\right) \setminus \left(\bigcup_{\substack{ i,j=1 \\ j\neq i}}^k \pared E_j\cap \pared E_i\right)\\
&=\left[ \bigcup_{i=1}^k \pa^*E_i \setminus \left(\bigcup_{\substack{ j=1 \\ j\neq i}}^k \pared E_j\cap \pared E_i\right)\right]
\end{split}
\end{equation}
where the symbol $\approx$ means \textit{equal up to an $\H^{n-1}$-negligible set}. In particular for every ball $B_r=B_r(x)$ it holds:
\begin{equation}\label{peri N-cluster}
P\left(\bigcup_{i=1}^k E_i ;B_r\right)=\sum_{i=1}^k P(E_i;B_r)-\sum_{\substack{i,j=1, \\ j\neq i}}^k \H^{n-1}(\pa^* E_i\cap \pa^*E_j \cap B_r). 
\end{equation}
\end{lemma}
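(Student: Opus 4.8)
The plan is to reduce the statement about the boundary of a finite union of mutually (Lebesgue-)disjoint finite-perimeter sets to the structure theory of sets of finite perimeter, in particular Federer's theorem identifying the essential boundary with the reduced boundary up to $\H^{n-1}$-null sets, together with the fact that for a single finite-perimeter set the point where the density is $1$ (resp. $0$) is the set-theoretic "inside" (resp. "outside") up to $\H^{n-1}$-null sets. Concretely, write $E=\bigcup_{i=1}^k E_i$. Since the $E_i$ are pairwise disjoint in measure, for $\H^{n-1}$-a.e. point $x$ every $E_i$ has a well-defined density $0$, $1$, or $1/2$ at $x$ (by Federer), and at most one of them can have density $1/2$ or $1$ at $x$ unless two reduced boundaries meet there; so one partitions $\R^n$ (up to an $\H^{n-1}$-null set) into the regions where exactly one $E_i$ has density $1$, where exactly one $E_i$ has density $1/2$ and all others density $0$, where two distinct $\pared E_i\cap\pared E_j$ meet, and where all $E_i$ have density $0$.

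The key computation is the density of $E$ at $x$ in terms of the densities of the $E_i$: because the sets are disjoint in measure, $|E\cap B_r(x)|=\sum_i |E_i\cap B_r(x)|$ up to an error that is $o(r^n)$ (in fact exactly, by disjointness), so the density of $E$ at $x$ is the sum of the densities of the $E_i$ at $x$ whenever those all exist. Hence: at a point where exactly one $E_i$ has density $1$, $E$ has density $1$; at a point where exactly one $E_i$ has density $1/2$ and the rest density $0$, $E$ has density $1/2$, so $x\in\pared E$ (using Federer and the $\H^{n-1}$-equivalence of $\pared E$ with the set of density-$1/2$ points); at a point where all $E_i$ have density $0$, $E$ has density $0$; and at a point in some $\pared E_i\cap\pared E_j$ with $i\neq j$, such a point is excluded from the right-hand side by construction and one checks it contributes nothing. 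This yields exactly $\pared(\bigcup_i E_i)\approx\bigl(\bigcup_i\pared E_i\bigr)\setminus\bigl(\bigcup_{i\neq j}\pared E_i\cap\pared E_j\bigr)$, and the second equality in \eqref{spaghetto} is a trivial set-theoretic rearrangement since removing from $\bigcup_i\pared E_i$ the union over all $i\neq j$ of $\pared E_i\cap\pared E_j$ is the same as, for each $i$, removing from $\pared E_i$ its intersection with every other $\pared E_j$.

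For the perimeter identity \eqref{peri N-cluster}, I would integrate the Hausdorff measure over the decomposition just obtained. By De Giorgi's structure theorem $P(E;B_r)=\H^{n-1}(\pared E\cap B_r)$ and similarly for each $E_i$. From \eqref{spaghetto},
\[
\H^{n-1}(\pared E\cap B_r)=\H^{n-1}\!\Bigl(\bigl(\textstyle\bigcup_i\pared E_i\bigr)\cap B_r\Bigr)-\H^{n-1}\!\Bigl(\textstyle\bigcup_{i\neq j}(\pared E_i\cap\pared E_j)\cap B_r\Bigr).
\]
Now $\bigcup_i\pared E_i$ need not be a disjoint union, but the overlaps are precisely the sets $\pared E_i\cap\pared E_j$ ($i\neq j$); moreover triple (or higher) intersections $\pared E_i\cap\pared E_j\cap\pared E_\ell$ are $\H^{n-1}$-null, because at such a point the densities of $E_i,E_j,E_\ell$ would each be $1/2$ and sum to $3/2>1$, contradicting that the density of $\R^n$ (or of $E$, bounded by $1$) at that point is at most $1$ — so by inclusion–exclusion $\H^{n-1}(\bigcup_i\pared E_i\cap B_r)=\sum_i\H^{n-1}(\pared E_i\cap B_r)-\sum_{i<j}\H^{n-1}(\pared E_i\cap\pared E_j\cap B_r)$ and likewise $\H^{n-1}(\bigcup_{i\neq j}(\pared E_i\cap\pared E_j)\cap B_r)=\sum_{i<j}\H^{n-1}(\pared E_i\cap\pared E_j\cap B_r)$, again using the vanishing of triple intersections. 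Subtracting gives $\H^{n-1}(\pared E\cap B_r)=\sum_i\H^{n-1}(\pared E_i\cap B_r)-2\sum_{i<j}\H^{n-1}(\pared E_i\cap\pared E_j\cap B_r)$, which is exactly \eqref{peri N-cluster} once one writes the last sum over ordered pairs $j\neq i$.

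The main obstacle, and the only genuinely substantive point, is the density bookkeeping: justifying that for $\H^{n-1}$-a.e. $x$ each $E_i$ admits density $0$, $1/2$, or $1$ (Federer's theorem, valid for finite-perimeter sets), that the density of the disjoint union is the sum of the densities, and — crucially — that higher-order intersections of reduced boundaries are $\H^{n-1}$-negligible (from the density-sum bound $\le 1$). Once these are in hand, everything else is set algebra and inclusion–exclusion on $\H^{n-1}$, and one should be careful only to invoke the cited structure theorems \cite[Theorems 15.5, 15.9, 16.2, 16.3]{maggibook} for the identifications $\pared E\approx\{x:\ \theta(E,x)=1/2\}$, $E^{(1)}\approx$ measure-theoretic interior, etc.
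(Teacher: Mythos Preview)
Your proposal is correct and follows essentially the same approach as the paper: both arguments rest on Federer's theorem to reduce the question to density bookkeeping, use that the density of the (Lebesgue-)disjoint union is the sum of the densities of the pieces, and then sort points according to whether exactly one $E_i$ has density $1/2$ (contributing to $\pared E$), two do (excluded, density of $E$ equals $1$), or none do. The paper organizes this via an explicit two-sided inclusion with negligible error sets $M_1,M_2$ built from the exceptional sets $R_i$ in Federer's decomposition, while you phrase it as a partition of $\R^n$ up to an $\H^{n-1}$-null set; these are the same argument in different dress. Your derivation of the perimeter identity via inclusion--exclusion, using that triple intersections $\pared E_i\cap\pared E_j\cap\pared E_\ell$ are $\H^{n-1}$-null (since the density sum would exceed $1$), is more explicit than the paper's one-line ``follows straightforwardly from \eqref{spaghetto}'', and in fact any honest unpacking of that line needs the same triple-intersection observation.
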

\begin{proof}
Relation \eqref{peri N-cluster} follows straightforwardly from \eqref{spaghetto}.
We recall from Federer's Theorem \cite[Theorem 16.2]{maggibook} that $\pared E\approx E^{\left(\frac{1}{2}\right)}$ for every locally finite perimeter set $E$. Hence, by setting
$E_0=\bigcup_{i=1}^k E_i,$
it is enough to prove that there exist two $\H^{n-1}$-negligible set $M_1,M_2$ such that
\begin{equation}\label{quello che vorrei dire}
E_0^{\mez} \subseteq  M_1\cup\left[  \left(\bigcup_{i=1}^k E_i^{\left(\frac{1}{2}\right)}\right) \setminus\left(\bigcup_{\substack{i,j=1\\ j\neq i} }^k E_i^{\mez}\cap E_j^{\mez}\right)\right]\subseteq (E_0^{\mez}\cup M_2).
\end{equation}
Let us also recall that, if $E$ is a set of locally finite perimeter then there exists an $\H^{n-1}$-negligible set $R$ with the following property
$$\R^n=E^{\zero}\cup E^{\mez}\cup E^{\uno}\cup R.$$
Thus, for every $i=0,\ldots,k$, we choose $R_i$ to be the $\H^{n-1}$-negligible set such that
	\begin{equation}\label{STAR}
	 \R^n=E_i^{\zero}\cup E_i^{\mez}\cup E_i^{\uno}\cup R_i,
	 \end{equation}	
and we set 
$$M_1:=\left(E_0^{\mez}\cap \bigcup_{i=1}^k R_i\right), \ \ \ M_2:= \bigcup_{i=1}^k R_i.$$
We prove that \eqref{quello che vorrei dire} holds with this choice of $M_1,M_2$ (note that $\H^{n-1}(M_1)=\H^{n-1}(M_2)=0$ is immediate). Let us set, for the sake of brevity
$$F:=M_1\cup \left[ \left(\bigcup_{i=1}^k E_i^{\left(\frac{1}{2}\right)}\right) \setminus\left(\bigcup_{\substack{i,j=1\\ j\neq i} }^k E_i^{\mez}\cap E_j^{\mez}\right)\right],$$
and divide the proof in two steps.\\

\textit{Step one: $E_0^{\mez}\subseteq F$}. In particular we prove that if $x\notin F$ then $x\notin E_0^{\mez}$. For $x\notin F$ one of the following must be in force
\begin{itemize}
		\item[a)] $\displaystyle x\notin M_1 \text{\ and \ }x\in \left(\bigcup_{i=1}^k E_i^{\left(\frac{1}{2}\right)}\right)\cap \left(\bigcup_{\substack{i,j=1\\ j\neq i} }^k E_i^{\mez}\cap E_j^{\mez}\right)$.
		\item[b)] $\displaystyle x\notin M_1 \text{\ and \ } x\notin  \left(\bigcup_{i=1}^k E_i^{\left(\frac{1}{2}\right)}\right) $ and in this case either:
             		 \begin{itemize}
							\item[b.1)] $\displaystyle x \notin E_0^{\mez}  \text{\ and \ } x\in \bigcup_{i=1}^kR_i \text{\ and \ } x\notin  \left(\bigcup_{i=1}^k E_i^{\left(\frac{1}{2}\right)}\right) $;
							\item[b.2)] $\displaystyle x \notin E_0^{\mez}  \text{\ and \ } x\notin \bigcup_{i=1}^kR_i \text{\ and \ } x\notin  \left(\bigcup_{i=1}^k E_i^{\left(\frac{1}{2}\right)}\right) $;
							\item[b.3)] $\displaystyle x \in E_0^{\mez}  \text{\ and \ } x\notin \bigcup_{i=1}^kR_i \text{\ and \ } x\notin  \left(\bigcup_{i=1}^k E_i^{\left(\frac{1}{2}\right)}\right) $.
						\end{itemize}
\end{itemize}
If situation a) is in force we immediately have that $x\in E_i^{\mez}\cap E_j^{\mez}$ for some $i\neq j$ which leads to $x\in E_0^{\uno}$ (since $|E_i\cap E_j|=0$) and thus $x\notin E_0^{\mez}$. Since b.1) and b.2) implies straightforwardly $x\notin E_0^{\mez}$, we need just to verify that situation b.3) cannot be attained. Assume b.3) is in force and note that, for every $i=1,\ldots,k$, thanks to \eqref{STAR} it must hold $x\in E_{i}^{\uno}\cup E_{i}^{\zero}$. If $x\in  E_{i}^{\zero}$ for all $i$ we have $x\in E_0^{\zero}$. If, instead, $x\in E_i^{\uno}$ for some $i$ then $x\in E_0^{\uno}$. In both cases we reach a contradiction because of $x\in E_0^{\mez}$.\\

\textit{Step two: $F\subseteq (E_0^{\mez}\cup M_2)$}. For every $x\in F$ one of the following must be in force.
\begin{itemize}
\item[a)] $\displaystyle x\in M_1$;
\item[b)] $\displaystyle x\in\left(\bigcup_{i=1}^k E_i^{\left(\frac{1}{2}\right)}\right) \setminus\left(\bigcup_{\substack{i,j=1\\ j\neq i} }^k E_i^{\mez}\cap E_j^{\mez}\right) \text{\ and \ }  \displaystyle x\notin  M_1$ and in this case either:
             		 \begin{itemize}
							\item[b.1)]$\displaystyle x\in\left(\bigcup_{i=1}^k E_i^{\left(\frac{1}{2}\right)}\right) \setminus\left(\bigcup_{\substack{i,j=1\\ j\neq i} }^k E_i^{\mez}\cap E_j^{\mez}\right) \text{\ and \ }  \displaystyle x\notin \bigcup_{i=1}^k R_i$;
							\item[b.2)] $\displaystyle x\in \left(\bigcup_{i=1}^k E_i^{\left(\frac{1}{2}\right)}\right) \setminus\left(\bigcup_{\substack{i,j=1\\ j\neq i} }^k E_i^{\mez}\cap E_j^{\mez}\right) \text{\ and \ }  \displaystyle x\notin E_0^{\mez}$;
					\end{itemize}
\end{itemize}
If a) is the case, then $x\in M_1\subset E_0^{\mez}$ and we are done. If b.1) is in force then there exists exactly one $j$ such that $x\in E_j^{\mez}$ and $x\in E_{i}^{\zero} $ for $i\neq 0, j$ since the sets $\{E_h\}_{h=1}^k$ are disjoint up to an $\L^n$-negligible set. Thus
\begin{align*}
\frac{|(\R^n\setminus E_0) \cap B_r(x)|}{\om_n r^n}&=1-\frac{|E_j \cap B_r(x)|}{\om_n r^n}-\sum_{\substack{i=1, \\ i\neq j}}^k \frac{|E_i \cap B_r(x)|}{\om_n r^n},
\end{align*}
which, passing to the limit as $r$ goes to $0^+$ implies $x\in (\R^n\setminus E_0)^{\mez}=E_0^{\mez}$. Finally, by considering situation b.2) we deduce that there exists exactly one $j\in \{1,\ldots,k\}$ such that $x\in E_{j}^{\mez}$ and $x\in E_i^{\zero}\cup R_i$ for $i\neq j$. If $x\in E_{i}^{(0)}$ for all $i\neq 0, j$ then, as above $x\in E_0^{\mez}$ and this is a contradiction (in this situation we are assuming $x\notin E_0^{\mez}$). Hence there is an index $i\neq 0$ such that $x\in R_i$ which means $x\in M_2$. The proof is complete.
\end{proof}

\subsection{Proof of Theorem \ref{mainthm1}}
We start by proving the existence and then, separately, we prove the regularity for Cheeger $N$-clusters.

\begin{theorem}[Existence of Cheeger $N$-clusters.]\label{existence}
Let $\Omega$ be a bounded set with finite perimeter and $0<|\Om|$. For every $N\in \N$ there exists 
a Cheeger $N$-cluster of $\Om$, i.e. an $N$-cluster $\E\subseteq \Om$ such that:
$$H_N(\Om)=\sum_{i=1}^N\frac{P(\E(i))}{|\E(i)|}.$$
Moreover each Cheeger $N$-cluster of $\Om$ has the following properties:
\begin{equation}\label{nontrivialchambers}
|\E(i)|\geq \frac{n^n\om_n}{2^n H_N(\Omega)^n}  \ \ \ \text{for all } \ i=1,\ldots,N,
\end{equation}
\begin{equation}\label{selfcheeger}
 h(\E(i))=\frac{P(\E(i))}{|\E(i)|} \ \ \ \text{for all } \ i=1,\ldots,N.
\end{equation}
\end{theorem}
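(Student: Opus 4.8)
The plan is to run the direct method of the calculus of variations, with the crucial additional work being a lower volume bound on chambers of a minimizing sequence so that the limiting cluster is genuinely an $N$-cluster (i.e. has all chambers with positive measure). First I would take a minimizing sequence $\E_k = \{\E_k(i)\}_{i=1}^N$ of $N$-clusters in $\Om$, so that $\sum_i P(\E_k(i))/|\E_k(i)| \to H_N(\Om)$. The key preliminary observation is the isoperimetric inequality: for any set $F$ of finite perimeter, $P(F) \geq n\om_n^{1/n}|F|^{(n-1)/n}$, hence $P(F)/|F| \geq n\om_n^{1/n}|F|^{-1/n}$, so $|F| \geq n^n\om_n/(P(F)/|F|)^n$. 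Applying this to each chamber and using that $P(\E_k(i))/|\E_k(i)| \leq \sum_j P(\E_k(j))/|\E_k(j)| \to H_N(\Om)$, we get for $k$ large that $|\E_k(i)| \geq n^n\om_n/(2^n H_N(\Om)^n)$ for every $i$; this is exactly the bound \eqref{nontrivialchambers} and it will be preserved in the limit. It also shows $H_N(\Om) < +\infty$ (take $N$ disjoint small balls in $\Om$, which is nonempty and open-ish enough after intersecting with a large ball — more precisely, since $|\Om|>0$ one can find $N$ disjoint balls of positive measure inside $\Om$ up to null sets), so the minimizing sequence is meaningful. Moreover, since all chambers are contained in $\Om$ which is bounded, $|\E_k(i)| \leq |\Om| < +\infty$, and $P(\E_k(i)) \leq |\E_k(i)| \cdot (P(\E_k(i))/|\E_k(i)|)$ is uniformly bounded.

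Next I would extract a convergent subsequence. Since each $\E_k(i) \subseteq \Om$ has uniformly bounded perimeter and $\Om$ is bounded, by compactness in $BV$ (or $L^1$-compactness of sets of equibounded perimeter in a bounded set) we may pass to a subsequence so that $\ca_{\E_k(i)} \to \ca_{\E(i)}$ in $L^1$ for every $i=1,\ldots,N$, for some sets $\E(i) \subseteq \Om$ of finite perimeter. Lower semicontinuity of the perimeter under $L^1$ convergence gives $P(\E(i)) \leq \liminf_k P(\E_k(i))$. The $L^1$ convergence also gives $|\E(i)| = \lim_k |\E_k(i)| \geq n^n\om_n/(2^n H_N(\Om)^n) > 0$, so property (a) holds, and it passes disjointness to the limit: $|\E(i)\cap \E(j)| = \lim_k |\E_k(i)\cap \E_k(j)| = 0$ for $i\neq j$, so (b) holds; (c) is the finite-perimeter statement just obtained. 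Hence $\E$ is an admissible $N$-cluster. Then
\[
\sum_{i=1}^N \frac{P(\E(i))}{|\E(i)|} \leq \sum_{i=1}^N \liminf_k \frac{P(\E_k(i))}{|\E_k(i)|} \leq \liminf_k \sum_{i=1}^N \frac{P(\E_k(i))}{|\E_k(i)|} = H_N(\Om),
\]
using that $|\E_k(i)| \to |\E(i)| > 0$ so the ratios converge appropriately (here one combines lower semicontinuity of the numerator with convergence of the denominator). Since the reverse inequality holds by definition of the infimum, $\E$ is a Cheeger $N$-cluster, and \eqref{nontrivialchambers} has already been verified in the limit.

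Finally I would prove \eqref{selfcheeger}, that each chamber is its own Cheeger set. Fix $i$ and let $F \subseteq \E(i)$ be any finite-perimeter subset with $|F|>0$; I want $P(F)/|F| \geq P(\E(i))/|\E(i)|$. Consider the competitor $N$-cluster obtained from $\E$ by replacing the $i$-th chamber $\E(i)$ with $F$, leaving all other chambers unchanged; this is still an admissible $N$-cluster (disjointness is inherited since $F\subseteq \E(i)$, and $0<|F|\leq|\E(i)|<\infty$, $P(F)<\infty$). Minimality of $\E$ gives
\[
\frac{P(\E(i))}{|\E(i)|} + \sum_{j\neq i}\frac{P(\E(j))}{|\E(j)|} \leq \frac{P(F)}{|F|} + \sum_{j\neq i}\frac{P(\E(j))}{|\E(j)|},
\]
hence $P(F)/|F| \geq P(\E(i))/|\E(i)|$. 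Since this holds for every competitor $F\subseteq \E(i)$ and $\E(i)$ itself is an admissible competitor, taking the infimum over $F$ yields $h(\E(i)) = P(\E(i))/|\E(i)|$, which is \eqref{selfcheeger}.

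The main obstacle is the lower volume bound: without it, a minimizing sequence could degenerate (a chamber shrinking to zero volume while its isoperimetric ratio stays bounded is impossible by the isoperimetric inequality, but one must invoke it explicitly), so the heart of the argument is the isoperimetric estimate $|\E_k(i)|\geq n^n\om_n/(2^n H_N(\Om)^n)$ and checking it survives the $L^1$ limit; everything else is the standard direct-method machinery plus a one-line competitor argument for \eqref{selfcheeger}.
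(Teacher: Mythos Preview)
Your proposal is correct and follows essentially the same route as the paper: direct method with the isoperimetric lower volume bound $|\E_k(i)|\geq n^n\om_n/(2H_N(\Om))^n$, compactness of sets of finite perimeter in a bounded domain, and lower semicontinuity of perimeter. You are in fact slightly more explicit than the paper in checking that disjointness passes to the $L^1$ limit and in spelling out the competitor argument for \eqref{selfcheeger}, which the paper dismisses as immediate from minimality.
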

\begin{proof}
Clearly $H_N(\Omega)<+\infty$ since we can always choose, for example, $B_1,\ldots B_N$ disjoint balls
such that $|B_i\cap\Omega|>0$ and obtain
\begin{equation}\label{notinfinity}
H_N(\Omega)\leq \sum_{i=1}^N \frac{P( B_i \cap \Omega)}{|B_i\cap \Omega|}<+\infty \, .
\end{equation}
Moreover, thanks to the fact that $\Om$ is bounded we deduce $H_N(\Om)>0$. Indeed for every $N$-cluster $\E\subseteq \Om$, the isoperimetric inequality for sets of finite perimeter implies
$$\sum_{i=1}^N\frac{P(\E(i))}{|\E(i)|}\geq nN\left(\frac{\om_n}{|\Om|}\right)^{1/n}$$
hence 
$$H_N(\Om)\geq nN\left(\frac{\om_n}{|\Om|}\right)^{1/n}>0.$$
Consider a minimizing sequence $\E^k=\{\E^k(i)\}_{i=1}^N$ of $N$-clusters such that
$$\lim_{k\rightarrow +\infty} \sum_{i=1}^N \frac{P(\E^k(i))}{|\E^k(i)|}=H_N(\Om).$$
Note that
\begin{align*}
 P(\E^k(i))&\leq|\Omega|\sum_{j=1}^N\frac{P(\E^k(i))}{|\E^k(i)}\leq 2|\Omega|H_N(\Omega).
 \end{align*}
Moreover, by exploiting again the isoperimetric inequality for sets of finite perimeter, we provide the bound
\begin{align*}
n\left(\frac{\om_n}{|\E^k(i)|}\right)^{\frac{1}{n}} &\leq \frac{P(\E^k(i))}{|\E^k(i)|}\leq 2H_N(\Omega)
\end{align*}
and thus
\begin{eqnarray}
\sup_{k}\left\{\max_i \left\{P(\E^k(i))\right\}\right\}&\leq &2|\Omega|H_N(\Omega), \label{a} \\
\inf_{k}\left\{\min_i\left\{|\E^k(i)|\right\}\right\}&\geq & \frac{n^n\om_n}{2^n H_N(\Omega)^n}. \label{b} 
\end{eqnarray}
Thanks to the boundedness of $\Omega$ and to \eqref{a}, we can apply the compactness theorem 
for sets of finite perimeter (see \cite[Theorem 12.26]{maggibook}) and deduce that, up to a subsequence, each sequence of chambers $\E^k(i)$ is converging in $L^1$ to some 
$\E(i)\subseteq \Omega$ as $k\rightarrow +\infty$. Equation \eqref{b} implies the lower bound \eqref{nontrivialchambers} while 
the lower semicontinuity of distributional perimeter (see \cite[Proposition 12.15]{maggibook}) yields:
\begin{align*}
H_N(\Omega)&\leq \sum_{i=1}^N\frac{P(\E(i))}{|\E(i)|}\leq \sum_{i=1}^N \liminf_{k\rightarrow \infty} \frac{P(\E^k(i))}{|\E^k(i)|}\\
&\leq \liminf_{k\rightarrow+\infty}\sum_{i=1}^N \frac{P(\E(i)^k)}{|\E^k(i)|}=H_N(\Omega).
\end{align*}
Property \eqref{selfcheeger} immediately follows from minimality.
\end{proof}
\begin{remark}\rm{
Thanks to  property \eqref{selfcheeger} $H_N$ can be equivalently defined as 
\begin{equation}\label{N-cheeger constant 2}
H_N(\Om)=\left\{\sum_{i=1}^{N}h(\E(i)) \ \Big{|} \ \E\subseteq \Om \text{ N-Cluster}\right\}.
\end{equation}
}
\end{remark}

We now show that every Cheeger $N$-cluster of a given open set is a $(\La,r_0)$-perimeter-minimizing inside $\Om$ that will implies immediately assertion $(i),(iii),(iv),(v)$ in Theorem \ref{mainthm1} by applying the regularity Theorem \ref{regularity}. \\

Note that, for proving regularity in the case of Cheeger $N$-clusters we have to deal with the possible non trivial components $\pa \E(i)\cap \pa \E(j)$.  Roughly speaking, property \eqref{selfcheeger}, implies that both $\E(i)$ and $\E(j)$ must have mean curvature bounded from above. This leads us to say that the mean curvature of $\E(i)$ ($\E(j)$) on $\pa\E(i)\cap\E(j)$ must be bounded from below as well and so neither outer nor inner cusps can be attained. This approach is based on an idea from \cite{BaMa82}, where the authors prove a regularity result for the solutions of some obstacle problems. 
\begin{theorem}\label{regolare}
Let $\Om$ be an open bounded set and $\E$ be a Cheeger $N$-cluster of $\Om$. Then there exists $\Lambda,r_0>0$ depending on $\E$ 
with $\Lambda r_0\leq 1$, such that each $\E(i)$ is a $(\Lambda,r_0)$-perimeter-minimizing in $\Om$. As a 
consequence, for every $i=1,\ldots,N$ the set $\Om\cap\pared \E(i)$ has the regularity of Theorem \ref{regularity}.
\end{theorem}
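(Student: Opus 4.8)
The plan is to show that every chamber $\E(i)$ has distributional mean curvature bounded in modulus by $\Lambda_0:=H_N(\Om)$ inside $\Om$ — that is, $\E(i)$ is simultaneously a subsolution and a supersolution of a prescribed mean curvature equation with bounded datum — and then to deduce $(\Lambda_0,r_0)$-perimeter-minimality from such a two-sided bound and conclude with Theorem \ref{regularity}. The constants are fixed up front: by \eqref{nontrivialchambers} every $|\E(i)|$ is bounded below by $n^n\om_n/(2^nH_N(\Om)^n)$, so choosing $r_0$ with $\om_n r_0^n<\tfrac12\min_i|\E(i)|$ and $r_0\le1/\Lambda_0$, all the competitor clusters built below remain admissible (no chamber is emptied) and $\Lambda_0r_0\le1$; this (and only this) is where the boundedness of $\Om$ enters.

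First I would prove the upper bound. Fix $i$, a ball $B_r\cc\Om$ with $r<r_0$, and a set $L\subseteq\E(i)$ with $\E(i)\setminus L\cc B_r$. Replacing $\E(i)$ by $L$ and keeping the other chambers fixed yields an admissible $N$-cluster (disjointness survives since $L\subseteq\E(i)$), so minimality of $\E$ together with \eqref{selfcheeger} gives $h(\E(i))=P(\E(i))/|\E(i)|\le P(L)/|L|$. As $\E(i)$ and $L$ coincide outside $B_r$, substituting $|L|=|\E(i)|-|\E(i)\setminus L|$ and $P(L)=P(\E(i))+P(L;B_r)-P(\E(i);B_r)$ and clearing denominators turns this into
\[
P(\E(i);B_r)\le P(L;B_r)+h(\E(i))\,|\E(i)\setminus L|\le P(L;B_r)+\Lambda_0\,|\E(i)\setminus L|,
\]
i.e. $\E(i)$ has distributional mean curvature $\le\Lambda_0$ in $\Om$. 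The argument is the same for every chamber.

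The substantial point is the lower bound: for every $F\supseteq\E(i)$ with $F\setminus\E(i)\cc B_r\cc\Om$, $r<r_0$, one must show $P(\E(i);B_r)\le P(F;B_r)+\Lambda_0|F\setminus\E(i)|$. Running the minimality argument against arbitrary subsets of $\widehat E_i:=\Om\setminus\bigcup_{j\ne i}\E(j)$ shows that $\E(i)$ is a Cheeger set of $\widehat E_i$, hence minimizes $F\mapsto P(F)-h(\E(i))|F|$ among all $F\subseteq\widehat E_i$: $\E(i)$ solves an obstacle problem for a prescribed mean curvature functional with bounded datum $h(\E(i))\le\Lambda_0$ and obstacle $O_i:=\bigcup_{j\ne i}\E(j)$. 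When $F$ does not meet $O_i$ this already gives $P(\E(i);B_r)\le P(F;B_r)$; in general I would set $F':=F\setminus O_i\supseteq\E(i)$, use the previous case for $P(\E(i);B_r)\le P(F';B_r)$, and then estimate the perimeter that carving $O_i$ out of $F$ may create. This is the step where the idea of \cite{BaMa82} is re-adapted: the obstacle $O_i$ itself has distributional mean curvature $\le\Lambda_0$ (apply the upper bound to each $\E(j)$ and combine via Lemma \ref{tecnico}), and this one-sided bound on the obstacle — together with the elementary inequalities \eqref{diffe}, \eqref{peri N-cluster} and the uniform bounds \eqref{a}, \eqref{nontrivialchambers} — is exactly what controls $P(F';B_r)-P(F;B_r)$ by $\Lambda_0\sum_{j\ne i}|F\cap\E(j)|\le\Lambda_0|F\setminus\E(i)|$. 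Concretely, one tests the cluster minimality of $\E$ with the competitor that grows $\E(i)$ into a neighboring $\E(j)$ and removes the same region from $\E(j)$, and uses the upper curvature bound for $\E(j)$ to reabsorb the boundary produced inside $\E(j)$.

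Granted the two one-sided bounds, $(\Lambda_0,r_0)$-minimality of $\E(i)$ in $\Om$ follows by submodularity of the perimeter (a consequence of \eqref{diffe}): for any $F$ with $\E(i)\Delta F\cc B_r$, apply the upper bound to the competitor $\E(i)\cap F$, the lower bound to $\E(i)\cup F$, add, and use $P(\E(i)\cap F;B_r)+P(\E(i)\cup F;B_r)\le P(\E(i);B_r)+P(F;B_r)$ to obtain $P(\E(i);B_r)\le P(F;B_r)+\Lambda_0|\E(i)\Delta F|$. Since $\Lambda_0r_0\le1$, Theorem \ref{regularity} applies and gives the regularity claimed for $\Om\cap\pared\E(i)$. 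The genuine difficulty is the contact estimate in the previous paragraph — ruling out that a competitor bulges from $\E(i)$ into $\E(j)$ along a cusp: this is a statement about the pair $\{\E(i),\E(j)\}$ rather than about $\E(i)$ alone, and it is precisely there that one must exploit that \emph{every} chamber, hence the obstacle $O_i$, is a subsolution from its own inside; everything else is bookkeeping with Lemma \ref{tecnico} and the volume bounds of Theorem \ref{existence}.
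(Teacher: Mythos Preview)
Your proof is correct and rests on the same key ingredient as the paper's, namely that the obstacle $O_i=M_i=\bigcup_{j\ne i}\E(j)$ has distributional mean curvature bounded above by $H_N(\Om)$ in $\Om$ (the paper's Step~one, which you correctly identify as an application of the upper bound to each $\E(j)$ combined through Lemma~\ref{tecnico} and \eqref{diffe}). The organization, however, is genuinely different. The paper compares $\E(i)$ directly with the single competitor $F\setminus M_i$, obtains \eqref{quasifinita} with a spurious factor $2h(\E(i))$ from the volume bookkeeping, and then uses Step~one via \eqref{diffe} to reach $\Lambda=3H_N(\Om)$. You instead prove the two one-sided inequalities separately (inner variations give the subsolution bound; for outer variations you pass through $F'=F\setminus O_i$, use that $\E(i)$ is Cheeger in $\widehat E_i$ to get $P(\E(i);B_r)\le P(F';B_r)$ with no loss, and control $P(F';B_r)-P(F;B_r)$ by the obstacle bound exactly as in \eqref{quasifinitachiave}--\eqref{e3}), and then glue them with the submodularity inequality $P(E\cap F)+P(E\cup F)\le P(E)+P(F)$. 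This packaging is cleaner, makes the sub/supersolution structure explicit, and yields the sharper constant $\Lambda=H_N(\Om)$; the paper's route is more direct but loses a factor of~$3$ in the final estimate. Both arguments need the same smallness of $r_0$ to keep competitor chambers nonempty.
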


\begin{proof}[Proof of theorem \ref{regolare}]
We start by fixing $i\in \{1,\ldots,N\}$ and by defining
$$M_i=\bigcup_{\substack{j=1, \\j\neq i }}^N\E(j).$$ 
We divide the proof in two steps.\\

\textit{Step one.} We prove that each $M_i$ has distributional mean curvature less than $H_N(\Om)$ in $\Om$. Let $B_r\subset\subset \Om$ be a ball and $L\subseteq M_i$ be a subset of finite perimeter of $M_i$ with $M_i\setminus L \subset\subset B_r$. What we need to prove is
\begin{equation}\label{limitiamolacurvatura}
P(M_i;B_r)\leq P(L;B_r)+H_N(\Om)|M_i\setminus L|. 
\end{equation}
Note that, up to choosing $r<r_0=\frac{n}{4H_N(\Om)}$ we can always assume $|\E(j)\cap L|>0$ for every $j\neq i$. Indeed $M_i\setminus L\cc B_r$ and, if by contradiction we assume $|\E(j)\cap L|=0$ for some $j\neq i$, this would mean $\E(j)\subset B_r$ up to a set of measure $0$ which implies (because of property \eqref{nontrivialchambers} and thanks to the choice of $r_0$) :
$$\frac{n^n\om_n}{2^nH_N(\Om)^n}<|\E(j)|<\om_n r^n<\frac{n^n\om_n}{4^nH_N(\Om)^n}$$
that is impossible.\\

By minimality it must hold:
\[
\frac{P(\E(j))}{|\E(j)|}\leq \frac{P(\E(j)\cap L)}{|\E(j)\cap L|} \ \ \ \ \text{ for every $j\neq i$,}
\]
that leads to:
\begin{align}
\frac{P(\E(j);B_r)+P(\E(j);B_r^c)}{|\E(j)|}&\leq \frac{P(\E(j)\cap L;B_r)+P(\E(j);B_r^c)}{|\E(j)|-|\E(j)\setminus L|}\nonumber\\
 P(\E(j);B_r)&\leq P(\E(j)\cap L;B_r)+|\E(j)\setminus L|h(\E(j)).\label{battle for falluja}
\end{align}
By exploiting \eqref{peri N-cluster} in Lemma \ref{tecnico} and \eqref{battle for falluja} above we obtain
\begin{align}
P(M_i;B_r)&=P\left(\cup_{j\neq i}\E(j);B_r\right)\nonumber\\
_{\text{\eqref{peri N-cluster} in Lemma \ref{tecnico}}}&=\sum_{j\neq i}P(\E(j);B_r)-\sum_{\substack{k,j\neq i,\ k\neq j}}\H^{n-1}(\partial^*\E(j)\cap \partial^* \E(k)\cap B_r)\nonumber\\
_{\eqref{battle for falluja}}&\leq \sum_{j\neq i}P(\E(j)\cap L;B_r) + |\E(j)\setminus L|h(\E(j))\nonumber\\
&-\sum_{\substack{k,j\neq i,\ k\neq j}}\H^{n-1}(\partial^*\E(j)\cap \partial^* \E(k)\cap B_r)\nonumber\\ 
&\leq \sum_{j\neq i}P(\E(j)\cap L;B_r) -\sum_{\substack{k,j\neq i,\ k\neq j}}\H^{n-1}(\partial^*\E(j)\cap \partial^* \E(k)\cap B_r)\nonumber\\
&+ H_N(\Om) |M_i\setminus L|  \label{il cacciatore},
\end{align} 
where in the last inequality we have used  the formulation of $H_N$ as in \eqref{N-cheeger constant 2}. By exploiting again Lemma \ref{tecnico} for $\{\E(j)\cap L\}_{j\neq i}$ we obtain
\begin{align}
P(M_i\cap L ; B_r)&=\sum_{j\neq i}P(\E(j)\cap L;B_r)\nonumber\\
&-\sum_{k,j\neq i \ \ k\neq j}\H^{n-1}(\pared (\E(j)\cap L) \cap \pared(\E(k)\cap L) \cap B_r).\label{full metal jacket}
\end{align}
Thanks to the fact that
\[
\pared (\E(j)\cap L) \cap \pared(\E(k)\cap L) \cap B_r \approx L^{(1)}\cap \pared\E(k)\cap\pared \E(j) \cap B_r
\]
we are lead to
\begin{equation}\label{platoon}
\sum_{j\neq i}P(\E(j)\cap L;B_r)-\sum_{k,j\neq i \ \ k\neq j}\H^{n-1}(\pared\E(k)\cap\pared \E(j) \cap B_r)\leq P(L; B_r),
\end{equation}
where we have exploited also $[(M_i\cap L)\Delta L] \cap B_r =\emptyset$. By combining \eqref{platoon} with \eqref{il cacciatore} we reach
\begin{align*}
P(M_i;B_r)&\leq P(L; B_r)+H_N(\Om) |M_i\setminus L|,
\end{align*} 
and we achieve the proof of Step one.\\

\textit{Step two.} We now prove that $\E(i)$ is a $(\Lambda,r_0)$-perimeter-minimizing for a suitable choice of $\Lambda$ 
and $r_0<\frac{n}{4H_N(\Om)}$ (according to Step one). Let $B_r\subset\subset \Om$ and $F$ be such that $F\Delta\E(i)\subset\subset B_r$. Define $E:=F\setminus M_i$ and observe, 
by minimality of $\E$ and by the relation $\E(i)\cap B_r^c=(F\setminus M_i)\cap B_r^c$, that:
\begin{align*}
\frac{P(\E(i))}{|\E(i)|} &\leq \frac{P(E)}{|E|}.\\
\end{align*}
Hence
\begin{align*}
\frac{P(\E(i);B_r)+P(\E(i);B_r^c)}{|\E(i)|} &\leq\frac{P(F\setminus M_i;B_r)+P(F\setminus M_i;B_r^c)}{|F|-|F \cap M_i|},\\
 &\leq\frac{P(F\setminus M_i;B_r)+P(\E(i);B_r^c)}{|\E(i)|+(|F\cap B_r|-|\E(i)\cap B_r|)-|F\cap M_i|}.
\end{align*}
If we expand the last inequality we get:
\begin{align*}
P(\E(i);B_r)|\E(i)|&\leq P(F\setminus M_i;B_r)|\E(i)|+P(\E(i))(|F\cap M_i|+|\E(i)\cap B_r|-|F\cap B_r|),
\end{align*}
which means (by observing that $F\cap M_i\subseteq F\setminus \E(i)$),
\begin{eqnarray}\label{quasifinita}
P(\E(i);B_r)&\leq& P(F\setminus M_i;B_r)+2h(\E(i))|\E(i)\Delta F|\, .
\end{eqnarray}
By making use of \eqref{diffe} we obtain
\begin{equation}\label{quasifinitachiave}
P(F\setminus M_i;B_r) \leq P(F;B_r)+P(M_i;B_r)-P(M_i\setminus F;B_r)
\end{equation}
Since $M_i\setminus F\subset M_i$ and $(M_i\setminus F)\Delta M_i\subset\subset B_r$ we can use step one (relation \eqref{limitiamolacurvatura}) with $L=M_i\setminus F$ for conclude that
\begin{eqnarray*}
P(M_i;B_r)&\leq & P(M_i\setminus F;B_r)+H_N(\Om)|M_i\setminus (M_i\setminus F)|\\
 &\leq & P(M_i\setminus F;B_r)+H_N(\Om)|M_i\cap F|.
 \end{eqnarray*}
 Hence
 \begin{equation}\label{e3}
 P(M_i;B_r)-P(M_i\setminus F;B_r)\leq  H_N(\Om)|F\setminus \E(i)|.
 \end{equation}
By plugging \eqref{e3} in \eqref{quasifinitachiave} we obtain
\begin{equation}\label{quasifinitachiave2}
P(F\setminus M_i;B_r)\leq P(F;B_r)+H_N(\Om)|\E(i)\Delta F|
\end{equation}
and by using \eqref{quasifinitachiave2} in \eqref{quasifinita} we find
$$P(\E(i);B_r)\leq P(F;B_r)+3H_N(\Om)|\E(i)\Delta F|.$$
By choosing $\Lambda=3H_N(\Om)$ and $r_0=\frac{1}{4 H_N(\Om)}$ we conclude that each $\E(i)$ is a 
$(\Lambda,r_0)$-perimeter-minimizing with $\Lambda r_0<1$ and we achieve the proof.
\end{proof}

Proof of assertion $(ii)$ can be viewed as a consequence of  \cite[Proposition 2.5, Assertion (vii)]{LP14} recalled below for the sake of clarity.
\begin{proposition}\label{leo}
Let $A$ be an open and bounded set and let $E$ be a Cheeger set $A$. Then
$$\pa^*A \cap \pa E \subseteq \partial^*E.$$
Moreover for every $x\in \pa^*A \cap \pa E$ 
it holds that 
$$\nu_{E}(x)=\nu_{A}(x),$$
where $\nu_{E}$, $\nu_{A}$ denotes the measure theoretic outer unit normal to $E$ and $A$ respectively.
\end{proposition}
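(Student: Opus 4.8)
The plan is to reduce the proposition to a blow-up analysis at $x$, using that a Cheeger set is a constrained minimizer. First, I would reformulate the minimality: since $P(F)/|F|\ge h(A)$ for every set of finite perimeter $F\subseteq A$, with equality when $F=E$, the Cheeger set $E$ minimizes the prescribed-curvature functional $\mathcal F(F):=P(F)-h(A)|F|$ among all sets of finite perimeter $F\subseteq A$; in particular (as recalled in the Introduction) $E$ is $(\Lambda,r_0)$-perimeter-minimizing in $A$ with $\Lambda=h(A)$, and I fix the canonical representative with $\pa E=\spt P(E;\cdot)=\overline{\pa^*E}$. Next I would show that $x\in\pa^*A\cap\pa E$ lies in the \emph{essential} boundary of $E$: since $|E\setminus A|=0$ and $A$ has density $\tfrac12$ at $x$, the upper density of $E$ at $x$ is $\le\tfrac12$, so $x\notin E^{\uno}$; and testing the minimality of $\mathcal F$ against the admissible competitor $E\setminus B_r(x)$ yields, for a.e.\ small $r$, $P(E;B_r(x))\le h(A)\,|E\cap B_r(x)|+\H^{n-1}(E^{\uno}\cap\pa B_r(x))$, whence a standard De Giorgi-type iteration (with the isoperimetric inequality, and $x\in\spt P(E;\cdot)$ so that $|E\cap B_r(x)|>0$ for all $r>0$) gives a density lower bound $|E\cap B_r(x)|\ge c_n r^n$. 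Thus $x\notin E^{\zero}\cup E^{\uno}$.

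I would then blow up at $x$. Rescaling $\mathcal F$ (perimeter scales like $r^{n-1}$, the volume term like $r^n$ and is therefore of lower order), any $L^1_{\loc}$-limit $E_0$ of $(E-x)/r$ as $r\to 0$ satisfies: $E_0\subseteq H:=\{y\in\R^n:y\cdot\nu_A(x)\le 0\}$ (because $(A-x)/r\to H$, as $x\in\pa^*A$); the origin lies in the essential boundary of $E_0$ (the density estimates pass to the limit); and $E_0$ is a local perimeter minimizer among sets of finite perimeter contained in $\overline H$; by the usual tangent-cone construction I may take $E_0$ to be a cone with vertex $0$. The crux is the rigidity statement $E_0=H$: if the free part $\Gamma:=\pa^*E_0\cap H$ were nonempty then (each ray of $\Gamma$ points strictly into the open half-space) $\overline\Gamma$ would be a nontrivial area-minimizing cone contained in $\overline H$ and meeting $\pa H$ only at its vertex; but $\overline\Gamma$ lies on one side of the minimal hyperplane $\pa H$ and touches it at $0$, so by the strong maximum principle for (possibly singular) minimal hypersurfaces $\overline\Gamma$ coincides with $\pa H$ near $0$, hence $\overline\Gamma=\pa H$ (being a cone) --- absurd. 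Therefore $\Gamma=\emptyset$, $\pa^*E_0\subseteq\pa H$, and $E_0=H$ (the alternative $E_0=\emptyset$ is excluded since $0$ lies in its essential boundary). Consequently every blow-up of $E$ at $x$ equals $H_{\nu_A(x)}$, which is exactly the statement that $x\in\pa^*E$ and $\nu_E(x)=\nu_A(x)$; the normal identity can also be read off directly, since for $x\in\pa^*A\cap\pa^*E$ the nested half-spaces $H_{\nu_E(x)}\subseteq H_{\nu_A(x)}$ must coincide.

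I expect the rigidity step to be the real obstacle. A naive perimeter comparison does not suffice: the competitor $E_0\cup(H\cap B_1)$ only gives $P(E_0;B_1)\le\om_{n-1}$, which does not characterize $H$ --- for instance, in the plane a sector with vertex at the origin has perimeter $\om_1$ inside $B_1$, exactly like $H$, yet fails to be a minimizer --- so the maximum-principle (equivalently, classification-of-minimal-cones) input is genuinely needed.
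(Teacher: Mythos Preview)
Your approach is exactly the blow-up strategy the paper sketches (the paper gives no self-contained proof, only the sentence ``combine $(\Lambda,r_0)$-minimality of $E$ in $A$ with the fact that blow-ups of $\partial A$ at $x\in\partial^*A$ converge to a half-plane'' and a reference to \cite{LP14}). Your reductions --- constrained minimality of $\mathcal F$, the density lower bound via the competitor $E\setminus B_r(x)$, compactness of blow-ups, and the limiting obstacle problem $E_0\subseteq H$ --- are all correct.

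The genuine gap is precisely where you anticipated it, but your proposed resolution does not work. The free-boundary cone $\overline\Gamma=\overline{\partial^*E_0\cap H^\circ}$ is \emph{not} a boundaryless stationary varifold: as a current it carries boundary on $\partial H$ (concentrated at the vertex), and the strong maximum principle for minimal hypersurfaces does not apply at boundary points. Your own planar example makes this concrete: for the sector $E_0=\{0\le\phi\le\beta\}$ with $0<\beta<\pi$, the set $\overline\Gamma$ is the single ray $\{\phi=\beta\}$, which lies in $\overline H$, touches $\partial H$ at the origin, and certainly does not coincide with $\partial H$ --- so the maximum principle as you invoke it cannot be the mechanism that rules this $E_0$ out. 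What actually excludes the sector is a direct competitor: cut off the triangular tip by the chord from $(\varepsilon,0)$ to $(\varepsilon\cos\beta,\varepsilon\sin\beta)$; the perimeter drops by $2\varepsilon\bigl(1-\sin(\beta/2)\bigr)>0$ while the competitor remains in $\overline H$, so the sector is not a constrained minimizer. In general dimension the rigidity $E_0=H$ is true, but it requires a classification of perimeter-minimizing cones contained in a half-space with vertex on $\partial H$ --- e.g.\ by reflecting across $\partial H$ to reduce to unconstrained minimizing cones, or by a direct competitor/first-variation argument at the vertex --- rather than the maximum principle applied to $\overline\Gamma$ at $0$.
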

The proof of Proposition \ref{leo} follows by combining the fact that each Cheeger set $E$ is a $(\Lambda,r_0)-$perimeter-minimizing in $A$ with the fact that the blow-ups of $\pa A$ at a point $x\in \pa^* A$ converge to an half plane.
\begin{proof}[Proof of Theorem \ref{mainthm1}]
Assertion $(i)$, $(iii),(iv), (v)$ follow by combining Theorems \ref{regolare} and \ref{regularity}. Assertion $(ii)$ is obtained by noticing that each chambers $\E(i)$ is a Cheeger set for 
	\[
	A=\Om \setminus \bigcup_{\substack{j=1,\\ j\neq i }}^N \E(j)
	\]
and then by applying Proposition \ref{leo}.
\end{proof}

\section{The singular set $\S(\E)$ of Cheeger $N$-clusters in low dimension}\label{cpt 4 sct The singular set of the Cheeger N-clusters in low dimension}
The following results are all stated and proved for open bounded and connected sets $\Om\subset \R^{n}$ having $C^{1}$ boundary and with the ambient space dimension less than $8$. We ask $\Om$ to be connected and with $C^1$ boundary because this is enough to avoid degenerate situations where $|\E(0)|=0$ (see Remark \ref{controesempioconne} where a Cheeger $N$-cluster with $|\E(0)|=0$ is provided).\\ 

We obtain the proof of Theorem \ref{mainthm2} by combining different results, sated and proved separately in Subsection \ref{cpt 4 sbsct Proof of mainthm2}. We premise some technical lemmas.

\subsection{Technical lemmas}

\begin{lemma}\label{complete regularity of cheeger N-clusters}
If  $n\leq 7$, $\Om$ is an open, bounded, connected sets with $C^1$ boundary and finite perimeter and $\E$ is a Cheeger $N$-cluster of $\Om$ it holds 
\[
\pared \E(i)=\pa \E(i) \ \ \ \ \ \text{for all $i\neq 0$}.
\]
 \end{lemma}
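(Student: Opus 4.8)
The claim is that when $n\le 7$ each chamber $\E(i)$, $i\ne 0$, has $\H^{n-1}$-full reduced boundary, i.e. $\pa\E(i)=\pared\E(i)$ (as sets, not merely up to $\H^{n-1}$-null sets). The plan is to combine the two pieces of information already available: (a) from Theorem \ref{mainthm1}(i) and (iii) (equivalently Theorem \ref{regolare} plus Theorem \ref{regularity}(i)), since $n\le 7$ we have $\S(\E(i);\Om)=\Om\cap(\pa\E(i)\setminus\pared\E(i))=\emptyset$, so inside $\Om$ the two boundaries already coincide; (b) from Theorem \ref{mainthm1}(ii), the chamber $\E(i)$ can only touch $\pa^*\Om$ tangentially, with $\pa^*\Om\cap\pa\E(i)\subseteq\pared\E(i)$ and matching outer normals. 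So the only thing left to control is what happens at points of $\pa\E(i)$ lying on $\pa\Om\setminus\pa^*\Om$; but $\Om$ is assumed to have $C^1$ boundary, hence $\H^{n-1}(\pa\Om\setminus\pa^*\Om)=0$ and in fact $\pa\Om=\pa^*\Om$ for a $C^1$ domain, so this set is empty.

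First I would write $\pa\E(i)=(\pa\E(i)\cap\Om)\cup(\pa\E(i)\cap\pa\Om)$, using that $\spt(\E(i))\subseteq\ov\Om$ so points of $\pa\E(i)$ outside $\ov\Om$ contribute nothing (the chamber is contained in $\Om$ up to null sets, and its support lies in $\ov\Om$). On $\pa\E(i)\cap\Om$: apply Theorem \ref{regolare} to get that $\E(i)$ is $(\Lambda,r_0)$-perimeter-minimizing in $\Om$ with $\Lambda r_0\le 1$, then Theorem \ref{regularity}(i) for $n\le 7$ gives $\Sigma(\E(i);\Om)=\Om\cap(\pa\E(i)\setminus\pared\E(i))=\emptyset$; hence $\pa\E(i)\cap\Om\subseteq\pared\E(i)$. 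On $\pa\E(i)\cap\pa\Om$: since $\pa\Om$ is $C^1$, every boundary point of $\Om$ is a point of the reduced boundary $\pared\Om=\pa^*\Om$ (the blow-up is a half-space), so $\pa\E(i)\cap\pa\Om=\pa\E(i)\cap\pa^*\Om$; now invoke Theorem \ref{mainthm1}(ii) (via Proposition \ref{leo}, using that $\E(i)$ is a Cheeger set of $A=\Om\setminus\bigcup_{j\ne i}\E(j)$, so that $\pa^*\Om\cap\pa E$ behaves as stated, noting $\pa^*A$ agrees with $\pa^*\Om$ near $\pa\Om$) to conclude $\pa^*\Om\cap\pa\E(i)\subseteq\pared\E(i)$. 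Combining the two inclusions yields $\pa\E(i)\subseteq\pared\E(i)$, and the reverse inclusion $\pared\E(i)\subseteq\pa\E(i)$ is automatic.

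The main obstacle I anticipate is the bookkeeping at $\pa\Om$: one must be careful that the ambient open set for the $(\Lambda,r_0)$-minimality is $\Om$ (an open set), so Theorem \ref{regularity} only controls the part of $\pa\E(i)$ strictly inside $\Om$, and the boundary contribution genuinely needs the separate input (ii)/Proposition \ref{leo} together with the $C^1$ hypothesis on $\pa\Om$ — this is exactly why the lemma is stated with that hypothesis rather than for a general bounded finite-perimeter $\Om$. A minor technical point to check is that near a point of $\pa\Om$ the set $A=\Om\setminus\bigcup_{j\ne i}\E(j)$ has $\pa^*A$ coinciding with $\pa^*\Om$, so that Proposition \ref{leo} applied to the Cheeger set $\E(i)$ of $A$ actually delivers the inclusion relative to $\pa^*\Om$; this follows since the other chambers have positive distance issues only in the interior, and in any case $\pa^*A\cap\pa\Om\subseteq\pa^*\Om$ up to $\H^{n-1}$-null sets, which suffices after recalling $\pared\E(i)\approx\E(i)^{(1/2)}$ from Federer's theorem. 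Everything else is a direct assembly of previously established results.
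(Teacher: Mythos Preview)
Your proof is correct and follows exactly the paper's approach: decompose $\pa\E(i)=(\pa\E(i)\cap\Om)\cup(\pa\E(i)\cap\pa\Om)$, use Theorem~\ref{mainthm1}(iii) (i.e.\ $\S(\E(i);\Om)=\emptyset$ for $n\le 7$) on the first piece, and use $\pa\Om=\pared\Om$ (from the $C^1$ hypothesis) together with Theorem~\ref{mainthm1}(ii) on the second. Your extra worry about whether $\pared A$ coincides with $\pared\Om$ near $\pa\Om$ is unnecessary, since Theorem~\ref{mainthm1}(ii) is already stated directly for $\pared\Om$ and can be cited as a black box.
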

 \begin{proof}
 We decompose $\pa \E(i)$ as
 \[
 \pa\E(i)=(\pa\E(i)\cap \Om)\cup (\pa\E(i)\cap \pa\Om).
 \]
 and we note that
 \[
 \pa\E(i)\cap \Om =\pared \E(i)\cap \Om,
 \]
 because of Assertion (iii) of \ref{mainthm1}. Moreover, since $\Om$ has $C^1$ boundary $\pared \Om=\pa \Om$ and thus, thanks to Assertion (ii) we have also
 \[
 \pa \E(i)\cap \pa \Om = \pa \E(i)\cap \pared \Om \subseteq \pared \E(i).
 \]
 Hence
 \[
\pared\E(i)\subseteq \pa \E(i)=(\pa\E(i)\cap \Om)\cup (\pa \E(i)\cap \pa \Om)\subseteq \pared \E(i),
 \]
 and we achieve the proof.
 \end{proof}
 \begin{remark}\label{closed}
If $n\leq 7$, $\Om$ is an open, bounded, connected set with finite perimeter and $C^1$-boundary and $\E$ is a Cheeger $N$-cluster of $\Om$, by considering $\F(k)=\E(k)\cup \pa \E(k)$ for $k\neq 0$, thanks to Lemma \ref{complete regularity of cheeger N-clusters} we must have $|\F(k)\Delta \E(k)|\leq |\pa \E(k)|=0$ and thus
	\[
	P(\F(k))=P(\E(k)).
	\]
For this reason in the sequel we are always assuming that each chamber $\E(k)$ for $k\neq 0$ \textbf{is a closed set with $\pared \E(k)=\pa \E(k)$}.
 \end{remark}
\begin{lemma}\label{densita dei punti di omega}
Let $n\leq 7$, $N\geq 2$ and $\Om$ be an open, bounded and connected set with $C^{1}$ boundary and finite perimeter in $\R^{n}$. If $\E$ is a Cheeger $N$-cluster of $\Om$, then for every $x\in \R^{n}$ and every $k=1,\ldots,N$ there exists the $n$-dimensional density $\vt_n(x,\E(k))$ and it takes values:
\[
\vartheta_n(x,\E(k))=
\begin{sistema}
0 \ \ \text{if $x\notin \E(k)$};\\ 
\frac{1}{2}\ \  \text{if $x\in \pa\E(k)$};\\
1 \ \ \text{if $x\in \mathring{\E(k)}$}.
\end{sistema}
\]
\end{lemma}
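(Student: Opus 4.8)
I want to show that for a Cheeger $N$-cluster $\E$ of a nice $\Om$ (with $n\le 7$), every chamber $\E(k)$ ($k\ge 1$) has a well-defined $n$-density everywhere, taking only the values $0,\tfrac12,1$, and that these values are detected exactly by $x\notin\E(k)$, $x\in\pa\E(k)$, $x\in\mathring{\E(k)}$ respectively (recall that by Remark~\ref{closed} we may and do assume $\E(k)$ is closed with $\pared\E(k)=\pa\E(k)$). The key input is Theorem~\ref{mainthm1}: since $n\le 7$, $\Sigma(\E(k);\Om)=\emptyset$, so $\Om\cap\pared\E(k)=\Om\cap\pa\E(k)$ is a $C^{1,\a}$-hypersurface; and since $\pa\Om=\pared\Om$ (as $\pa\Om$ is $C^1$) together with Assertion~(ii), the boundary points lying on $\pa\Om$ are also reduced-boundary points where $\E(k)$ meets $\pa\Om$ tangentially. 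The upshot is $\pa\E(k)=\pared\E(k)$, i.e. \emph{every} topological boundary point of $\E(k)$ is a reduced-boundary point, and at a reduced-boundary point the density is $\tfrac12$ by De Giorgi's structure theorem (\cite[Theorem 16.2]{maggibook}).

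\textbf{Steps.} First, the case $x\notin\E(k)$: since $\E(k)$ is closed, $x$ lies in the open set $\R^n\setminus\E(k)$, so a whole ball $B_r(x)$ misses $\E(k)$ and the density is trivially $0$. Second, the case $x\in\mathring{\E(k)}$: a whole ball $B_r(x)\subseteq\E(k)$, so the density is $1$. Third, the main case $x\in\pa\E(k)$. Here I split into $x\in\Om$ and $x\in\pa\Om$. If $x\in\Om\cap\pa\E(k)$, then by Lemma~\ref{complete regularity of cheeger N-clusters} (or directly Theorem~\ref{mainthm1}(i),(iii)) $x\in\pared\E(k)$, and Federer's theorem gives $\vartheta_n(x,\E(k))=\tfrac12$, the density existing because a $C^{1,\a}$-hypersurface has, at each of its points, flat blow-up, so the density exists and equals $\tfrac12$. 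If $x\in\pa\Om\cap\pa\E(k)$, then by Theorem~\ref{mainthm1}(ii) we have $x\in\pared\E(k)$ as well (with $\nu_{\E(k)}(x)=\nu_\Om(x)$), so again $\vartheta_n(x,\E(k))=\tfrac12$; here one uses that $\pa\Om$ being $C^1$ means the blow-up of $\Om$ at $x$ is a half-space, hence so is the blow-up of $\E(k)$, giving density $\tfrac12$. Finally, I should note that these three cases exhaust $\R^n$ (since $\R^n=\mathring{\E(k)}\sqcup\pa\E(k)\sqcup(\R^n\setminus\E(k))$ for the closed set $\E(k)$) and are mutually exclusive, so the density exists at every point and the piecewise formula holds.

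\textbf{Main obstacle.} The only delicate point is the existence of the density at points of $\pa\Om\cap\pa\E(k)$: a priori Federer's theorem only tells us that $\H^{n-1}$-almost every boundary point is in $\pared\E(k)$, and Theorem~\ref{mainthm1}(ii) is phrased as an inclusion $\pa^*\Om\cap\pa\E(k)\subseteq\pared\E(k)$ which already handles \emph{every} such point since $\pa^*\Om=\pa\Om$ by $C^1$-regularity. So the real work is just to confirm that at such a point the blow-up is genuinely a half-space and not merely an element of the tangent measure in an a.e. sense — this follows because $(\Lambda,r_0)$-minimality of $\E(k)$ inside $\Om$, combined with the fact that $\E(k)\subseteq\Om$ and $\pa\Om$ is $C^1$, forces the rescalings $\frac{\E(k)-x}{r}$ to converge to the half-space $\{y:y\cdot\nu_\Om(x)\le 0\}$, which has density $\tfrac12$ at the origin. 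I expect this to be a short argument once one invokes the convergence of blow-ups for $(\Lambda,r_0)$-minimizers, so the lemma reduces to carefully citing Theorem~\ref{mainthm1} and the standard structure theory; there should be no genuinely hard step.
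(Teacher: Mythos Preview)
Your proof is correct and follows essentially the same route as the paper: the paper uses Remark~\ref{closed} for the interior/exterior cases and then invokes Lemma~\ref{complete regularity of cheeger N-clusters} (which already merges the $\Om$ and $\pa\Om$ cases you treat separately) to obtain the chain $\pa\E(k)=\pared\E(k)\subseteq\E(k)^{(1/2)}\subseteq\pa\E(k)$. Your ``main obstacle'' is not one: the inclusion $\pared E\subseteq E^{(1/2)}$ holds at \emph{every} point of the reduced boundary by De~Giorgi's structure theorem, so once Theorem~\ref{mainthm1}(ii) places $x\in\pared\E(k)$ no further blow-up argument at $\pa\Om$ is required.
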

\begin{proof} 
Each chamber $\E(k)$ for $k\neq 0$ is a closed set (see Remark \ref{closed}) and thus 
	\begin{align*}
		\E(k)^c&=\E(k)^{\zero},\\
		\mathring{\E(k)}&=\E(k)^{\uno}.
	\end{align*}
Lemma \ref{complete regularity of cheeger N-clusters} implies 
	\[
		\pa \E(k)=\pared \E(k)\subseteq \E(k)^{\mez}\subseteq \pa \E(k).
	\] 
\end{proof}

\subsection{Proof of theorem \ref{mainthm2}}\label{cpt 4 sbsct Proof of mainthm2}

We are now ready to prove two propositions that immediately imply Theorem \ref{mainthm2}. The following Proposition is needed in order to prove Assertion $(i)$ in Theorem \ref{mainthm2}.
\begin{proposition}\label{ognicameraconfinaconilvuoto}
Let $n\leq 7$, $N\geq 2$ and $\Om$ be an open, bounded and connected set with $C^{1}$ boundary and finite perimeter in $\R^{n}$. Let $\E$ be a Cheeger $N$-cluster of $\Om$. 
Then for every $i\in \{1,\ldots,N\}$ there exists $x\in \pa\E(i)$ such that $|B_s(x)\cap \E(0)|>0$ for all $s>0$.
\end{proposition}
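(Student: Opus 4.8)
The plan is to argue by contradiction: suppose there is a chamber, say $\E(1)$, every point of whose boundary has $n$-dimensional density zero for $\E(0)$, i.e. $|B_s(x)\cap \E(0)|=0$ for all $x\in\pa\E(1)$ and all small $s$. Since $n\leq 7$, Theorem \ref{mainthm1}(iii) gives $\pa\E(1)\cap\Om=\pared\E(1)\cap\Om$, and by Remark \ref{closed} we may take $\E(1)$ closed with $\pa\E(1)=\pared\E(1)$; by Lemma \ref{densita dei punti di omega} the density of each chamber at a point of $\pa\E(1)$ is $1/2$. So near a point $x\in\pa\E(1)\cap\Om$, $\Om$ is (up to a null set) covered by $\E(1)$ and the other chambers $\E(2),\ldots,\E(N)$, with no room for $\E(0)$. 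This means that locally around $\pa\E(1)$ the cluster behaves like a partition of a neighborhood, and the idea is to exploit this rigidity to produce a competitor with strictly smaller energy, contradicting minimality.

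Concretely, I would fix a point $x_0\in\pa\E(1)\cap\Om$ (it is nonempty and, being a $C^{1,\a}$ hypersurface, has a genuine tangent hyperplane there). Choose a small ball $B=B_r(x_0)\cc\Om$ on which $\pa\E(1)$ is a $C^{1,\a}$ graph, so $\E(1)\cap B$ is essentially a half-ball-like region and $B\setminus\E(1)$ is covered (up to measure zero) by the remaining chambers $\E(2),\ldots,\E(N)$. The plan is to ``donate'' a thin layer of $\E(1)$ near $\pa\E(1)\cap B$ to a neighbor, or conversely, and track the change in $\sum_i P(\E(i))/|\E(i)|$. Because $\E(1)$ is a Cheeger set of $A=\Om\setminus\bigcup_{j\neq 1}\E(j)$ (as shown in the proof of Theorem \ref{mainthm1}) and simultaneously $h(\E(1))=P(\E(1))/|\E(1)|$, the first variation of the single ratio $P(\E(1))/|\E(1)|$ under such a perturbation vanishes to first order only if the perturbed interface keeps constant mean curvature; the competing neighbor's ratio changes at a comparable rate but with a sign that does not cancel. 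The cleanest route is probably: since $|\E(0)\cap B|=0$, the interface $\pa\E(1)\cap B$ coincides $\H^{n-1}$-a.e.\ with $\bigcup_{j\geq 2}\pa\E(1)\cap\pa\E(j)\cap B$; pick $j\geq 2$ with $\H^{n-1}(\pa\E(1)\cap\pa\E(j)\cap B)>0$ and move the common interface, using that $P(\E(1))+P(\E(j))$ counts that interface twice — sliding it changes $P(\E(1))$ and $P(\E(j))$ by equal and opposite $\H^{n-1}$ amounts to leading order while transferring volume, so the sum of the two Cheeger ratios is strictly decreased unless $h(\E(1))=h(\E(j))$ and the curvatures match, and then one can slide further or use a second neighbor or the $C^1$ boundary of $\Om$ to eventually reach a contradiction.

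The main obstacle I anticipate is making the ``no room for $\E(0)$ forces a volume-transferring perturbation that strictly lowers the energy'' step rigorous: one has to show that $\E(1)$ cannot be trapped in an equilibrium configuration with all its neighbors, i.e.\ rule out the delicate case where every interface has exactly the curvature and every adjacent chamber the exact Cheeger constant that make the first variation vanish. Here I would use global information — summing a telescoping family of such identities over a chain of chambers connecting $\E(1)$ to the boundary $\pa\Om$ (which exists since $\Om$ is connected), together with the tangential contact condition of Theorem \ref{mainthm1}(ii) and the $C^1$ regularity of $\pa\Om$, to derive that the volumes or curvatures would have to satisfy an overdetermined system with no solution, or that one could shrink the whole cluster away from a portion of $\pa\Om$ and strictly decrease $H_N$. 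A secondary technical point is justifying that the perturbations stay admissible $N$-clusters (chambers remain of positive finite measure and finite perimeter, and stay inside $\Om$), which follows from the lower volume bound \eqref{nontrivialchambers} and from keeping the perturbations supported in $B\cc\Om$.
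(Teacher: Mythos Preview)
Your variational route has a real gap at the step you yourself flag. You hope that sliding the shared interface between $\E(1)$ and a neighbour $\E(j)$ strictly decreases $\frac{P(\E(1))}{|\E(1)|}+\frac{P(\E(j))}{|\E(j)|}$ ``unless $h(\E(1))=h(\E(j))$ and the curvatures match.'' But $\E$ is a \emph{minimizer}, so the first variation of the full energy under any compactly supported deformation of that interface is zero; this is exactly the computation carried out later in Proposition~\ref{curvature}, and it shows that stationarity merely fixes the interface curvature to
\[
C_{1,j}=\frac{|\E(j)|\,h(\E(1))-|\E(1)|\,h(\E(j))}{|\E(1)|+|\E(j)|},
\]
which is perfectly consistent for \emph{arbitrary} values of $h(\E(1)),h(\E(j))$. (Note also that $P(\E(1))$ and $P(\E(j))$ do not change by ``equal and opposite'' amounts under sliding: they share the interface, so both perimeters vary by the \emph{same} first-order quantity.) Your fallback --- chaining these stationarity identities along a path of chambers to $\pa\Om$ and extracting an ``overdetermined system'' --- does not produce a contradiction: each identity just determines one curvature, and nothing forces incompatibility. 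A second-variation argument might in principle help, but you do not propose one, and it would be substantially more delicate.

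The paper's proof is entirely different and non-variational. It shows directly that $\pa\E(1)\setminus\bigl[\pa\Om\cup\bigcup_{k\geq 2}\pa\E(k)\bigr]\neq\emptyset$; any such point lies in $\Om$ away from every other closed chamber, so a small ball around it meets $\E(0)$ in positive measure. To prove this claim one takes a connected component $M$ of $\pa\E(1)$ meeting some $\pa\E(2)$. Using that three disjoint sets cannot all have density $\tfrac12$ at a common point (Lemma~\ref{densita dei punti di omega}), one argues that if the claim failed then the relative boundary of $M\cap\pa\E(2)$ inside $M$ would be empty, forcing $M\subseteq\pa\E(2)\cap\Om$. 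Thus $M$ is a \emph{closed} $C^{1,\a}$ hypersurface without boundary, compactly contained in $\Om$ and disjoint from the other $\pa\E(k)$. One then rigidly \emph{translates} $M$ (together with whatever it bounds) until it touches another piece of $\pa\E(1)$ or $\pa\E(2)$; this creates a point of density $0$ for a chamber without changing the energy, contradicting the regularity of Theorem~\ref{mainthm1}. The contradiction comes from regularity after a rigid motion, not from an energy decrease under a local deformation.
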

\begin{proof}
Without loss of generality (and for the sake of clarity) we can assume $i=1$. We note that the proof of the lemma is a consequence of the following claim.\\

\textit{Claim.} $\pa \E(1)\setminus \left[\pa\Om \cup \bigcup_{k=2}^N \pa \E(k)\right]\neq \emptyset$.\\

Indeed, if the claim is in force then there exists $x\in \pa\E(1)\cap \Om$ and $x\notin \E(k)$ for all $k\neq 1$. Since the chambers are closed we can also find a small ball $B_s(x)\cc\Om$ such that $B_s(x)\cap \E(k)=\emptyset$ for all $k\neq 1$, implying (thanks to Lemma \ref{densita dei punti di omega})
\begin{align*}
|\E(0)\cap B_s(x)|&=|B_s|-\sum_{k=1}^n|\E(i)\cap B_s|=|B_s|-|\E(1)\cap B_s|>0
\end{align*}
(because $x\in \pa \E(1)=\E(1)^{\mez}$) and achieving the proof.\\

Let us focus on the proof of the claim. Thanks to the connectedness of $\Om$ it is easy to show that $\pa \E(1)\setminus \pa\Om \neq \emptyset$. If also $\pa \E(1)\cap \pa \E(k)=\emptyset$ for $k\neq 1$ the claim trivially holds. Otherwise it must exist at least an index $j\in\{2,\ldots,N\}$ such that $\pa\E(1)\cap \pa\E(j)\neq \emptyset.$ Assume without loss of generality $j=2$:
	\[
	\pa\E(1)\cap \pa\E(2)\neq \emptyset.
	\]
Choose $x\in \pa\E(1)\cap \pa\E(2)$ and let us denote by $M$ the connected component of $\pa \E(1)$ containing $x$. Note that $x\notin \pa \Om$. Otherwise we would have $x\in \pa\E(1)\cap \pa\E(2)\cap \pa\Om$ and because of the regularity of $\Om$ and thanks to Lemma \ref{densita dei punti di omega} this leads to a contradiction:
\begin{align*}
\frac{1}{2}&=\lim_{r\rightarrow 0^+} \frac{|\Om\cap B_r(x)|}{|B_r(x)|}=\lim_{r\rightarrow 0^+} \sum_{h=0}^N \frac{|\E(h)\cap B_r(x)|}{|B_r(x)|}\\
&\geq \lim_{r\rightarrow 0^+} \frac{|\E(1)\cap B_r(x)|}{|B_r(x)|}+\frac{|\E(2)\cap B_r(x)|}{|B_r(x)|}=1.
\end{align*}
Hence the following are in force:
\begin{equation}\label{leader maximo}
M\setminus \pa \Om\neq \emptyset, \ \ \ \ \ M\cap\pa\E(2)\neq \emptyset.
\end{equation}
We now note that, if 
\begin{equation}\label{non ne posso piu di questo lemma}
M\setminus \left[\pa \Om\cup \bigcup_{k=2}^N\pa \E(k)\right]=\emptyset
\end{equation} then, necessarily $M \subseteq\pa \E(2)\cap \Om$. Indeed considered 
	\[
	y\in \overline{M\cap \pa \E(2)}\cap \overline{(M\setminus \pa\E(2) )}=\bd_M(M\cap \pa \E(2)),
	\]
since \eqref{non ne posso piu di questo lemma} is in force (and since $y\in \bd_M(M\cap \pa \E(2))$) either there exists an index $k\neq 1,2$ such that $y\in \pa\E(k)$ or $y\in \pa \Om$. In both cases we reach a contradiction because $y$ would be a point of density $\frac{1}{2}$ for three disjoint sets ($\E(1),\E(2),\E(k)$ or $\E(1),\E(2),\Om^c $). Thus the only possibility is that $\bd_M(M\cap\pa\E(2))=\emptyset$ and since \eqref{leader maximo} is in force, by applying the following (topological) fact \eqref{topological fact} we conclude that $M=M\cap \pa\E(2)\subseteq\pa\E(2)$.
\begin{equation}\label{topological fact}
\begin{array}{c}
\text{If $M\subset \R^n$ is a closed connected set and $C\subseteq M$ is a non empty}\\
\text{subset of $M$, then $\bd_M(C):=\overline{C}\cap \overline{(M\setminus C)}=\emptyset$ if and only if $M=C.$}
\end{array}
\end{equation} 
As before $M\cap \pa\E(2)\cap \pa \Om=\emptyset$ otherwise we would have a point of density $\frac{1}{2}$ for three disjoint set $(\E(1),\E(2),\Om^c)$ and hence $M\subseteq \pa\E(2)\cap \Om$. This means that $M$ must be a closed $C^{1,\a}$ surface without boundary contained in $\Om$ and disjoint from the other sets $\E(k)$ and from $\pa \Om$, which means that one of the situation of Figure \ref{movimenti} has to be in force.
\begin{figure}
\centering
 \includegraphics[scale=0.8]{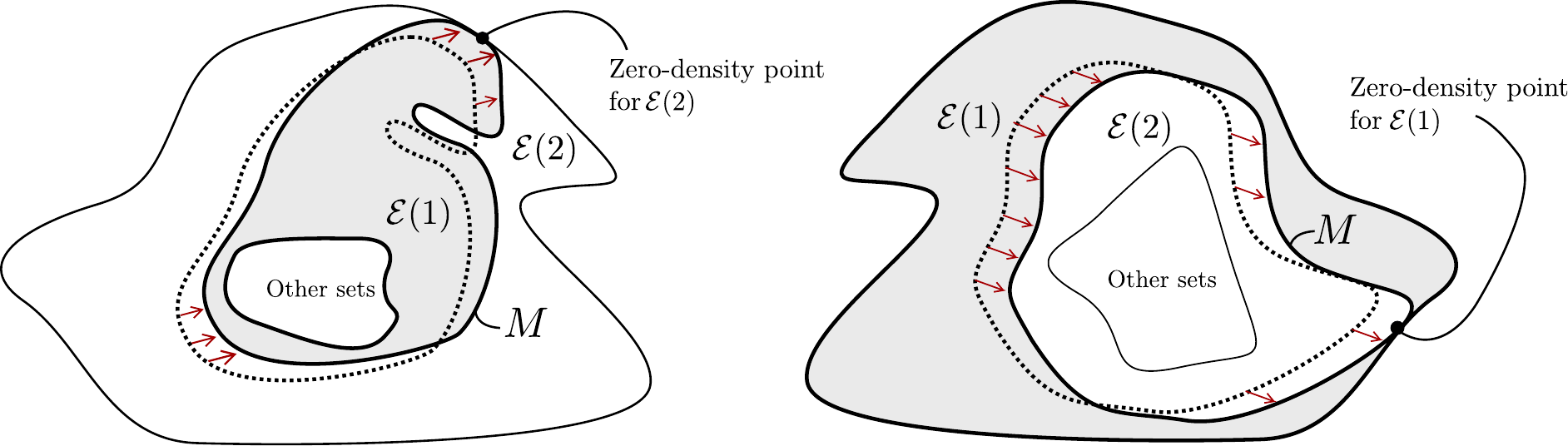}\caption{{\small If \eqref{non ne posso piu di questo lemma} holds, then one of these two situations must be in force and we can contradict regularity by simply translate $M$ until it kisses another part of the boundary yielding a not allowed point of density zero.}}\label{movimenti}
\end{figure}
We are thus able to move a little bit $M$, and whatever is bounded by $M$, inside $\Om$ as in Figure \ref{movimenti} until it kisses $\pa\E(2)$ or $\pa\E(1)$  (we easily exclude that $M$ bounds a hole of $\Om$ with a slight variation of this previous argument). In this way we produce a zero-density point for $\E(1)$ or for $\E(2)$ without changing $\sum_j\frac{P(\E(j))}{|\E(j)|}$ and this contradicts the regularity.\\

Hence \eqref{non ne posso piu di questo lemma} cannot holds and the claim is true.

\end{proof}

The next Proposition implies Assertion $(ii)$ in Theorem \ref{mainthm2}.
\begin{proposition}\label{prima caratterizazione}
Let $n\leq 7$, $N\geq 2$ and $\Om$ be an open, bounded and connected set with $C^{1}$ boundary and finite perimeter in $\R^{n}$. Let $\E$ be a Cheeger $N$-cluster of $\Om$. Then
\begin{align*}
\S(\E;\Om)&=\S(\E(0);\Om)\\
&=\{x\in \pa\E(0)\cap \Om \ | \ \vt_n(x,\E(0))=0\}\\
&=\Om\cap \bigcup_{\substack{i,j=1,\\ i\neq j}}^N \pa \E(i)\cap \pa\E(j)\cap \pa \E(0).
\end{align*}
\end{proposition}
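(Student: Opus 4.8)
\textbf{Proof plan for Proposition \ref{prima caratterizazione}.}

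The plan is to prove the three equalities in a chain. First I would establish the leftmost equality $\S(\E;\Om)=\S(\E(0);\Om)$. By the very definition \eqref{insieme singolare 1} of the singular set of a cluster, $\S(\E;\Om)=\S(\E(0);\Om)\cup\bigcup_{i=1}^N\S(\E(i);\Om)$; but Assertion $(iii)$ of Theorem \ref{mainthm1} tells us that $\S(\E(i);\Om)=\emptyset$ for every $i=1,\dots,N$ since $n\leq 7$, so only the $i=0$ term survives. This step is immediate.

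Next I would prove $\S(\E(0);\Om)=\{x\in\pa\E(0)\cap\Om\ |\ \vt_n(x,\E(0))=0\}$. Recall $\S(\E(0);\Om)=[\pa\E(0)\setminus\pared\E(0)]\cap\Om$. The inclusion ``$\supseteq$'' is clear: a point of $\pa\E(0)$ with density $0$ cannot be in $\pared\E(0)$, which by Federer's theorem is $\H^{n-1}$-equivalent to $\E(0)^{(1/2)}$ — but I need to be careful, since $\pared\E(0)$ (the topological reduced boundary) could in principle contain density-$0$ points that are not Lebesgue points of the blow-up; the cleaner route is to use the density dichotomy. Here is where Lemma \ref{densita dei punti di omega} does the work: for $x\in\Om$ and each chamber $\E(k)$, $k=1,\dots,N$, the density $\vt_n(x,\E(k))$ exists and equals $0$, $\tfrac12$, or $1$. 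Since $\sum_{k=0}^N\vt_n(x,\E(k))=\vt_n(x,\Om)=1$ for $x\in\Om$ (as $\Om$ is open, so $x$ is an interior point), the density $\vt_n(x,\E(0))$ also exists and equals $1-\sum_{k=1}^N\vt_n(x,\E(k))\in\{0,\tfrac12,1,\dots\}$; being a density it lies in $[0,1]$, hence $\vt_n(x,\E(0))\in\{0,\tfrac12,1\}$, and moreover at most one chamber (including the external one) can have density $\tfrac12$ and then all others have density $0$ while exactly... actually the possible profiles at $x\in\Om$ are: some single chamber has density $1$ (all others $0$); or exactly two chambers have density $\tfrac12$ (all others $0$). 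So $x\in\pa\E(0)\cap\Om$ means $\vt_n(x,\E(0))\in\{0,\tfrac12\}$. If $\vt_n(x,\E(0))=\tfrac12$ then $x$ is a point where $\E(0)$ has a measure-theoretic half-space blow-up, one checks this forces $x\in\E(0)^{(1/2)}\approx\pared\E(0)$, so $x\notin\S(\E(0);\Om)$; if $\vt_n(x,\E(0))=0$ then $x\notin\pared\E(0)$ so $x\in\S(\E(0);\Om)$. This gives the second equality. The mild technical point to handle is identifying $\E(0)^{(1/2)}$ with $\pared\E(0)$ up to $\H^{n-1}$-null sets and checking that in our situation (all chambers closed, $n\le 7$) no extra exotic points appear — but Federer's theorem plus the structure theory cited in the paper cover this.

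For the third equality, I would argue that $x\in\pa\E(0)\cap\Om$ with $\vt_n(x,\E(0))=0$ forces, by the density bookkeeping above, that $\sum_{k=1}^N\vt_n(x,\E(k))=1$ with each term in $\{0,\tfrac12,1\}$; the value $1$ for some single $\E(j)$ is excluded because then $x\in\mathring{\E(j)}$ and $x\notin\pa\E(0)$ (the chambers are closed, so $\mathring{\E(j)}$ is open and disjoint from $\E(0)$, hence from $\pa\E(0)$ — wait, $\pa\E(0)$ is closed and $\mathring{\E(j)}\subseteq\E(0)^c=\E(0)^{(0)}$, but could still touch $\pa\E(0)$... no: $\mathring{\E(j)}$ is open and contained in $\Om\setminus\E(0)$, and $\pa\E(0)\cap\Om\subseteq$ closure of $\E(0)$, but a point of $\mathring{\E(j)}$ has a whole neighbourhood in $\E(j)$ disjoint from $\E(0)$ up to measure zero, so its $\E(0)$-density... hmm, I should just use that $\vt_n(x,\E(0))=0$ together with $x\in\pa\E(0)$ is consistent, and separately rule out $\vt_n(x,\E(j))=1$ via $\vt_n(x,\E(0))=0\Rightarrow$ not all mass is in $\E(j)$). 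Cleanly: $\vt_n(x,\E(0))=0$ and the density profile at $x\in\Om$ being either ``one chamber full'' or ``two chambers half'' forces the ``two chambers half'' case, i.e. there exist $i\neq j$ in $\{1,\dots,N\}$ with $\vt_n(x,\E(i))=\vt_n(x,\E(j))=\tfrac12$, hence $x\in\E(i)^{(1/2)}\cap\E(j)^{(1/2)}\subseteq\pa\E(i)\cap\pa\E(j)$, and $x\in\pa\E(0)$ by hypothesis — giving the inclusion into $\Om\cap\bigcup_{i\neq j}\pa\E(i)\cap\pa\E(j)\cap\pa\E(0)$. Conversely, if $x\in\Om\cap\pa\E(i)\cap\pa\E(j)\cap\pa\E(0)$ with $i\neq j$ (both nonzero; if one index were $0$ the intersection is just $\pa\E(i)\cap\pa\E(0)$ which we handle the same way), then by Lemma \ref{densita dei punti di omega} $\vt_n(x,\E(i))=\vt_n(x,\E(j))=\tfrac12$, so these two already account for the full unit density, forcing $\vt_n(x,\E(0))=0$; thus $x$ lies in the density-zero set, completing the third equality. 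The main obstacle throughout is purely the careful density accounting at boundary points of $\Om$ — ensuring the ``two halves or one full'' dichotomy genuinely holds for all $x\in\Om$, which rests squarely on Lemma \ref{densita dei punti di omega} and the disjointness of chambers — rather than anything geometrically deep; once that dichotomy is in hand all three equalities are short.
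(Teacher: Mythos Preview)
Your approach is essentially the same as the paper's: both hinge on the density dichotomy from Lemma \ref{densita dei punti di omega} and the local identification of $\pa\E(0)$ with a regular $\pa\E(j)$ away from triple points. The organization differs slightly --- you prove $\S_0=A$ and $A=B$ each by double inclusion, while the paper argues cyclically $B\subseteq A\subseteq\S_0\subseteq B$, introducing an explicit partition of $\Om$ into interiors $E_i$, pairwise interfaces $F_{i,j}$, and triple-contact pieces $G_{i,j}$ to handle the single nontrivial inclusion $\S_0\subseteq B$.

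The one soft spot in your plan is the step ``$\vt_n(x,\E(0))=\tfrac12\Rightarrow x\in\pared\E(0)$'', which you flag as a mild technical point and defer to Federer. Federer alone only gives $\E(0)^{(1/2)}\approx\pared\E(0)$ up to an $\H^{n-1}$-null set, which is not a pointwise statement. The correct fix is the local-identification argument (and this is precisely what the paper's $F_{i,j}$-case does): if $\vt_n(x,\E(0))=\tfrac12$ then by your dichotomy there is a unique $j\ge1$ with $x\in\pa\E(j)$ and $x\notin\E(k)$ for all $k\neq 0,j$; closedness of the chambers gives a ball $B_s(x)\subset\Om$ with $B_s(x)\cap\E(k)=\emptyset$ for all such $k$, so $\E(0)\cap B_s(x)=B_s(x)\setminus\E(j)$ and hence $\pa\E(0)\cap B_s(x)=\pa\E(j)\cap B_s(x)=\pared\E(j)\cap B_s(x)$, forcing $x\in\pared\E(0)$. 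Once this is inserted, your proof and the paper's are the same argument in different packaging.
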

\begin{proof}
Thanks to Proposition \ref{ognicameraconfinaconilvuoto} the set $\E(0)$ is not empty. As pointed out in Remark \ref{the singular set}, thanks to the regularity of each chambers, it is immediate that $\S(\E;\Om)=\S(\E(0);\Om)$. Let us denote (for the sake of brevity) by
	\begin{align*}
	\S_0&=\S(\E(0);\Om),\\
	A&=\{x\in \pa\E(0)\cap \Om \ | \ \vt_n(x,\E(0))=0 \}\\
	B&=\Om \cap \bigcup_{\substack{i,j=1,\\ k\neq j}}^N \pa \E(i)\cap \pa\E(j)\cap \pa \E(0).
	\end{align*}
We note that $B\subseteq A$ is immediate and also $A\subseteq \S_0$ is immediate, since if $x\in A$ then $x\notin \E(0)^{\mez}\supseteq \pared \E(0)$. We are left to show that $\S_0\subseteq B$. In order to do this we define the following family of subsets of $\Om$.
\begin{align}
E_i&:=\displaystyle \mathring{\E(i)} \ \ \ &\text{for all $0 \leq i\leq N$}\label{Partizione di omega in insiemi furbi}\\
F_{i,j}&:=\displaystyle \Om\cap \pa \E(i)\cap \pa\E(j) \setminus \left(\bigcup_{\substack{k=0, \\ k\neq i,j}}^N \pa\E(k)\right), \ \ \ &\text{for all $0\leq i<j \leq N$}\label{Partizione di omega in insiemi furbi1}\\
G_{i,j}&:=\displaystyle  \Om\cap \pa \E(i)\cap \pa\E(j) \cap \pa\E(0), \ \ \ &\text{for all $1\leq i<j \leq N$}.\label{Partizione di omega in insiemi furbi2}
\end{align}
It is easy to verify that the Borel sets defined in \eqref{Partizione di omega in insiemi furbi},\eqref{Partizione di omega in insiemi furbi1},\eqref{Partizione di omega in insiemi furbi2} form a partition of $\Om$. Now, for a given point $x\in \S_0$, clearly $x\notin E_i$ for all $i=0,\ldots,N$. Thus either $x\in F_{i,j}$  for some $0 \leq i < j\leq N$ or $x\in G_{i,j}$ for some $1 \leq i < j\leq N$. If $x\in F_{i,j}$, by closedness  there exists a small ball $B_s(x)$ such that $\pa \E(k)\cap B_s(x)=\emptyset$ for all $k\neq i,j$. This implies that either $i=0$ or $j=0$ (since we have chosen $x\in \S_0\subset \pa\E(0)$) and that $\pa \E(0)\cap B_s(x)=\pa\E(j)\cap B_s(x)$ leading to say that $\pa\E(0)$ must be regular in a small neighborhood of $x$ and contradicting $x\in \S_0$. Hence necessarily $x\in G_{i,j}$ for some  $1 \leq i < j\leq N$ and we achieve the proof: $\S_0\subseteq B$. 
\end{proof}
The following corollary is an easy consequence of Proposition \ref{prima caratterizazione}.
\begin{corollary}\label{egli e chiuso}
Let $n\leq 7$, $N\geq 2$ and $\Om$ be an open, bounded and connected set with $C^{1}$ boundary and finite perimeter in $\R^{n}$. If $\E$ is a Cheeger $N$-cluster for $\Om$, then $\S(\E(0);\Om)$ is closed.
\end{corollary}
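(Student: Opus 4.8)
The plan is to read the result straight off the third equality in Proposition \ref{prima caratterizazione}, which gives
\[
\S(\E(0);\Om)=\Om\cap\bigcup_{\substack{i,j=1,\\ i\ne j}}^N \pa\E(i)\cap\pa\E(j)\cap\pa\E(0).
\]
Set $K:=\bigcup_{1\le i<j\le N}\pa\E(i)\cap\pa\E(j)\cap\pa\E(0)$. Since each topological boundary $\pa\E(k)$ is closed, $K$ is a finite union of closed sets, hence closed in $\R^n$. The only subtlety is that a priori we only know $\S(\E(0);\Om)=\Om\cap K$, which is merely \emph{relatively} closed in the open set $\Om$; so the whole point will be to upgrade $\Om\cap K$ to $K$ itself. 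Because each chamber $\E(k)$, $k\ne 0$, is taken closed and contained in $\Om$ (Lebesgue-wise), and $\E(0)=\Om\setminus\bigcup_{i=1}^N\E(i)$ is open with $\E(0)\subseteq\Om$, we have $\pa\E(0)\subseteq\overline{\E(0)}\subseteq\overline\Om=\Om\cup\pa\Om$, and therefore $K\subseteq\Om\cup\pa\Om$. Consequently it suffices to prove $K\cap\pa\Om=\emptyset$: once this is known, $K\subseteq\Om$, so $\S(\E(0);\Om)=\Om\cap K=K$ is closed.

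The verification that $K$ cannot touch $\pa\Om$ is exactly the density computation already carried out inside the proof of Proposition \ref{ognicameraconfinaconilvuoto}. Suppose $x\in\pa\E(i)\cap\pa\E(j)\cap\pa\Om$ for some $1\le i<j\le N$. By Lemma \ref{densita dei punti di omega}, $\vt_n(x,\E(i))=\vt_n(x,\E(j))=\tfrac12$, while the $C^1$-regularity of $\pa\Om$ forces $\vt_n(x,\Om)=\tfrac12$. Since $\E(i)$ and $\E(j)$ are Lebesgue-disjoint subsets of $\Om$, one has $|(\E(i)\cup\E(j))\cap B_r(x)|=|\E(i)\cap B_r(x)|+|\E(j)\cap B_r(x)|\le|\Om\cap B_r(x)|$, and dividing by $\om_n r^n$ and letting $r\rightarrow 0^+$ yields $\tfrac12=\vt_n(x,\Om)\ge\vt_n(x,\E(i))+\vt_n(x,\E(j))=1$, a contradiction. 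Hence $K\cap\pa\Om=\emptyset$, and the corollary follows.

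I do not expect any genuine obstacle here: the argument is essentially bookkeeping. The one thing worth stating carefully is precisely the passage from ``relatively closed in $\Om$'' (which is immediate from the characterization) to ``closed in $\R^n$'' (which is the actual content of the statement), and this rests entirely on the inclusion $\pa\E(0)\subseteq\overline\Om$ together with the three-disjoint-sets density estimate at boundary points that is already available from Proposition \ref{ognicameraconfinaconilvuoto}. If one only wished to prove relative closedness in $\Om$, no use of the $C^1$ hypothesis or of connectedness would be needed at all.
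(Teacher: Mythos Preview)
Your proof is correct and follows essentially the same approach as the paper: both arguments invoke the characterization of $\S(\E(0);\Om)$ from Proposition~\ref{prima caratterizazione}, use the closedness of the topological boundaries $\pa\E(k)$, and rule out limit points on $\pa\Om$ via the same ``three disjoint sets of density $\tfrac12$'' contradiction already used in Proposition~\ref{ognicameraconfinaconilvuoto}. The only difference is cosmetic---the paper phrases it sequentially (take $x_k\to x$, pass to a subsequence lying in a fixed $\pa\E(i)\cap\pa\E(j)\cap\pa\E(0)$), whereas you phrase it set-theoretically (show $K$ is closed and $K\cap\pa\Om=\emptyset$, hence $\Om\cap K=K$).
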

\begin{proof}
Thanks to Proposition \ref{prima caratterizazione} we have that
	\begin{equation}\label{putin}
	\S(\E;\Om)=\Om\cap \bigcup_{\substack{i,j=1,\\ i\neq j}}^N \pa \E(i)\cap \pa\E(j)\cap \pa \E(0).
	\end{equation}
Let $\{x_k\}_{k\in \N}\subseteq \S(\E;\Om)$ such that $x_k\rightarrow x$. Up to extract a subsequence we have that $\{x_k\}_{k\in \N}\subset  \Om\cap \pa \E(i)\cap \pa\E(j)\cap \pa \E(0)$ for some $1\leq i<j\leq N$ (since \eqref{putin} is in force). By closedness we obtain $x\in \pa\E(i)\cap \pa\E(j)\cap \pa\E(0)$ and we need to prove that $x\in \Om$. If $x\in \pa\Om$ we have $x\in \pa\E(i)\cap \pa\E(j)\cap \pa\Om$ which is a contradiction since $x$ would be a point of density $\frac{1}{2}$ for three disjoint sets $\E(1),\E(2),\Om^c$). Hence $x\in \Om$ and thus $x\in \S(\E(0);\Om)$.
\end{proof}
The proof of Theorem \ref{mainthm2} is now obtained as an easy consequence of the previous results.
\begin{proof}[Proof of Theorem \ref{mainthm2}]
Follows by Propositions \ref{ognicameraconfinaconilvuoto}, \ref{prima caratterizazione} and by Corollary \ref{egli e chiuso}.
\end{proof}
\section{The planar case} \label{cpt 4 the planar case}
As in the previous sections, the proof of Theorem \ref{mainthm3} is attained by combining different results that we state and prove in Subsection \ref{cpt 4 sbsct proof of mainthm3}. We premise some technical lemmas.
\subsection{Technical lemmas}

\begin{lemma}\label{buone componenti connesse}
Let $n\leq 7$, $N\geq 2$ and $\Om$ be an open, bounded and connected set with $C^{1}$ boundary and finite perimeter in $\R^{n}$. Let $\E$ be a Cheeger $N$-cluster for $\Om$. If $E$ is an indecomposable component of $\E(0)$ such that $E\cc \Om$, then there exist at least three different indexes $i,j,k\neq 0$ such that $\pa E \cap \E(i)\neq \emptyset$, $\pa E\cap \E(j)\neq \emptyset$, and $\pa E\cap \E(k)\neq \emptyset$. In particular, $E$ shares boundary at least with three different chambers.
\end{lemma}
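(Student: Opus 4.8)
The plan is to argue by contradiction: suppose $E$, an indecomposable component of $\E(0)$ with $E\cc\Om$, shares boundary with at most two of the chambers $\E(i)$, $i\neq 0$. Since $E\subseteq\E(0)^{(1)}$ up to $\H^{n-1}$-null sets and $\pa E$ is $\H^{n-1}$-equivalent to $\pared E$, the reduced boundary $\pared E$ is covered (up to $\H^{n-1}$-null sets) by the interfaces $\pared E\cap\pared\E(i)$ for the finitely many indices $i$ with $\pa E\cap\E(i)\neq\emptyset$. Because $E\cc\Om$ we have $\pa E\cap\pa\Om=\emptyset$, so $E$ does not touch the ambient boundary and every bit of $\pa E$ is a genuine interface with some chamber $\E(i)$, $i\neq 0$ (it cannot be an interface with $\E(0)$ itself, since $E$ is a whole indecomposable component of $\E(0)$, so $\pared E\cap\pared\E(0)$ is $\H^{n-1}$-null).

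First I would dispose of the case where $E$ meets only one chamber, say $\E(1)$. Then $\H^{n-1}(\pa E\setminus\pared\E(1))=0$, so up to null sets $\pa E\subseteq\pared\E(1)$, and in a neighbourhood of each point of $\pared\E(1)\cap\pared E$ (which is, by Theorem~\ref{mainthm1}, a $C^{1,\a}$ hypersurface when $n\leq 7$, where there are no singular points of the chambers) the sets $E$ and $\E(1)$ lie on opposite sides. This forces $E$ to be (up to null sets) a connected component of $\Om\setminus\E(1)$; but then replacing $\E(1)$ by $\E(1)\cup E$ strictly decreases $\sum_k P(\E(k))/|\E(k)|$ (it removes the interface $\pared E$ from $P(\E(1))$ while increasing $|\E(1)|$, hence lowers $h(\E(1))$; compare \eqref{selfcheeger}), contradicting minimality. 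Actually more care is needed — one must also delete the chamber $E$ would have been, but it is not a chamber, it is part of $\E(0)$ — so the competitor is the $N$-cluster $\{\E(1)\cup E,\E(2),\dots,\E(N)\}$, which is legitimate and strictly better. This rules out the one-chamber case.

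The main work is the two-chamber case: suppose $\pa E$ meets only $\E(i)$ and $\E(j)$, say $i=1$, $j=2$. Then $\pa E$ is, up to $\H^{n-1}$-null sets, the disjoint union of the two interfaces $\Gamma_1:=\pared E\cap\pared\E(1)$ and $\Gamma_2:=\pared E\cap\pared\E(2)$, each a relatively open $C^{1,\a}$ hypersurface (since $n\leq 7$ and, by Theorem~\ref{mainthm2}, the only singularities of $\E$ sit at triple-type points, which by assumption do not occur on $\pa E$). The key point I would exploit is that both $\E(1)$ and $\E(2)$ are Cheeger sets for their complementary domains (proof of Theorem~\ref{mainthm1}), hence have \emph{bounded distributional mean curvature from above equal to} $h(\E(1))$, resp.\ $h(\E(2))$, on these interfaces — and since $E\subset\E(0)$ sits on the other side, the mean curvature of $\E(1)$ computed from $\Gamma_1$ as seen from inside $E$ has a sign, and similarly for $\E(2)$; more precisely the chamber $\E(1)$ curves \emph{away} from $E$ on $\Gamma_1$ with curvature $h(\E(1))>0$ and $\E(2)$ curves away from $E$ on $\Gamma_2$ with curvature $h(\E(2))>0$. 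Thus $E$ is a bounded open set whose boundary is smooth and has strictly positive mean curvature (with respect to the outward normal of $E$) everywhere; I would then push $E$ by a small translation (or, cleaner, perform a volume-preserving perturbation) to create a forbidden zero-density point exactly as in the proof of Proposition~\ref{ognicameraconfinaconilvuoto} and Figure~\ref{movimenti}: translate $E$ and whatever it bounds until it kisses $\pa\E(1)$ or $\pa\E(2)$ at a new point, producing a point of $\R^n$ that is a density-$\tfrac12$ point for three mutually disjoint sets (the translated $E$ is absorbed into $\E(0)$), which contradicts Lemma~\ref{densita dei punti di omega}; alternatively, the translated configuration does not change $\sum_k P(\E(k))/|\E(k)|$ but violates the regularity already established, the same contradiction used in Proposition~\ref{ognicameraconfinaconilvuoto}.

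The hard part will be making the "curves away from $E$ on both interfaces, hence can be translated to kiss" argument rigorous: one needs to know that $\pa E$ is globally the union of finitely many $C^{1,\a}$ pieces with a definite sign of curvature, that $E$ bounds a nice region (does not surround a hole of $\Om$ — excluded exactly as in Proposition~\ref{ognicameraconfinaconilvuoto} since $E\cc\Om$), and that the translation genuinely creates a triple point rather than, say, $E$ simply expanding to fill a channel. I expect the cleanest route is to reduce, via the sign of the mean curvature on $\Gamma_1$ and $\Gamma_2$ and the fact that $E$ is indecomposable and compactly contained in $\Om$, to the situation already handled pictorially in Figure~\ref{movimenti}, and then invoke that argument verbatim. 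Once the contradiction is reached in both the one- and two-chamber cases, $E$ must touch at least three distinct chambers $\E(i),\E(j),\E(k)$ with $i,j,k\neq 0$, which is the assertion.
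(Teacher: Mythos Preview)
Your one-chamber case is fine and coincides with what the paper does: absorb $E$ into $\E(1)$ and observe that the competitor is strictly better.

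The two-chamber case, however, has a real gap, and you have overlooked a much simpler argument. First the gap. The translation trick from Proposition~\ref{ognicameraconfinaconilvuoto} applies to a \emph{closed} $C^{1,\a}$ hypersurface $M\subset\pa\E(1)\cap\pa\E(2)$ bounding a region lying entirely inside one chamber; rigidly moving that region inside the other chamber is a legitimate competitor that leaves $\sum_k P(\E(k))/|\E(k)|$ unchanged. Your $E$ is not of this type: it is wedged between $\E(1)$ and $\E(2)$, and $\pa E=\overline{\Gamma_1}\cup\overline{\Gamma_2}$ necessarily contains the junction set $\overline{\Gamma_1}\cap\overline{\Gamma_2}\subset\pa\E(0)\cap\pa\E(1)\cap\pa\E(2)$, i.e.\ points of $\Sigma(\E(0);\Om)$, so $\pa E$ is not globally $C^{1,\a}$ (your sentence ``by assumption do not occur on $\pa E$'' is not justified). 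More importantly, ``translating $E$'' is not a well-defined operation on the cluster: $E\subset\E(0)$ is part of the complement, and moving it does nothing to $\{\E(i)\}_{i\ge 1}$; if instead you try to carry $\Gamma_1,\Gamma_2$ along, you deform $\E(1)$ and $\E(2)$ in a way that changes both perimeters and volumes, and there is no reason the functional stays constant. Finally, your curvature sign is reversed: on $\Gamma_1$ the set $\E(1)$ has outward mean curvature $h(\E(1))>0$, hence $E$ has \emph{negative} outward mean curvature there, so $E$ is concave from its own side, not convex.

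The paper bypasses all of this with the same elementary competitor you used in the one-chamber case. If $\pa E$ meets only $\E(1)$ and $\E(2)$, compare $\H^{n-1}(\pared E\cap\pa\E(1))$ with $\H^{n-1}(\pared E\cap\pa\E(2))$; say the first is larger. Replace $\E(1)$ by $\E(1)\cup E$. Since
\[
P(\E(1)\cup E)=P(\E(1))-\H^{n-1}(\pared E\cap\pa\E(1))+\H^{n-1}(\pared E\cap\pa\E(2))\le P(\E(1))
\]
and $|\E(1)\cup E|=|\E(1)|+|E|$, the ratio drops strictly whenever $|E|>0$, contradicting minimality; hence $E=\emptyset$. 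No curvature, no translation, no regularity beyond Theorem~\ref{mainthm1} is needed.
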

\begin{proof}
Let $E$ be a generic indecomposable component of $\E(0)$. Assume that $E$ shares its boundary with exactly 
two other different chambers $j,k\geq 1$ and $\partial E\cap \pa \Om=\emptyset$. Then either
$$a) \ \ \H^{n-1}(\pa^*E\cap\pa\E(j))\geq \H^{n-1}(\pa^*E\cap\pa\E(k)),$$
or
$$b) \ \ \H^{n-1}(\pa^*E\cap\pa\E(k))\geq \H^{n-1}(\pa^*E\cap\pa\E(j))$$
hold. Assume that $a)$ holds and define $\E_1(j):=\E(j)\cup E$, $\E_1(i)=\E(i)$ for $i\neq j$. Since 
$$P(\E_1(j))=P(\E(j))-\H^{n-1}(\pa^*E\cap\pa\E(j))+\H^{n-1}(\pa^*E\cap\pa\E(k))$$
we obtain:
\begin{eqnarray*}
H_N(\Om)&\leq& \sum_{i=1}^N\frac{P(\E_1(i))}{|\E_1(i)|}\\
&=&\frac{P(\E_1(j))}{|\E_1(j)|}+\sum_{i\neq j}\frac{P(\E(i))}{|\E(i)|}\\
&=&\frac{P(\E(j))-\H^{n-1}(\pa^*E\cap\pa\E(j))+\H^{n-1}(\pa^*E\cap\pa\E(k))}{|\E(j)|+|E|}+\sum_{i\neq j}\frac{P(\E(i))}{|\E(i)|}\\
&\leq&\frac{P(\E(j))}{|\E(j)|+|E|}+\sum_{i\neq j}\frac{P(\E(i))}{|\E(i)|}.
\end{eqnarray*}
If $|E|>0$ we are led to $H_N(\Om)<H_N(\Om)$ which is a contradiction, so $|E|=0$. Since $E$ is open (because $\E(0)$ is open), then $E=\emptyset$. If $E$ shares its boundary with exactly one chamber we argue in the same way. We have discovered that every decomposable component of $\E(0)$ that shares boundary with exactly one or two chambers is empty. The proof is complete.
\end{proof}

\begin{lemma}\label{nel piano!}
Let $E$ be a Cheeger set of an open bounded set $A\subset \R^{2}$. Assume that the following properties hold for $E$:
\begin{itemize}
\item[1)] $\#(\bd_{\pa A}(\pa A\cap\pa E))<+\infty$,
where
	\[
	\bd_{\pa A}(\pa A\cap\pa E)=[\pa A \cap\pa E]\cap\overline{[\pa A\setminus\pa E]};
	\]
\item[2)] every $x\in \pa A\cap \pa E$ is a regular point for $\pa A$, namely $x\in \pa A\cap \pa E \subseteq \pared A$ ;
\end{itemize}
Then $\H^1(\pa E \cap \pa A)>0$.  
\end{lemma}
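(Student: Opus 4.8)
The claim is that a planar Cheeger set $E$ of $A$, whose contact with $\partial A$ is ``tame'' (finitely many boundary points of the contact set, and contact only at regular points of $\partial A$), must actually touch $\partial A$ on a set of positive $\H^1$-measure. The natural strategy is by contradiction: assume $\H^1(\pa E \cap \pa A)=0$. Combined with hypothesis 1), this forces $\pa A \cap \pa E$ to be a \emph{finite} set of points $\{y_1,\dots,y_m\}$ (if it had a limit point it would contribute positive measure, or at least $\pa A\cap\pa E$ would have infinitely many topological boundary points in $\pa A$). Then $\pared E \cap \Om$-type regularity (here $E$ is a $(\Lambda,r_0)$-perimeter-minimizer in $A$, hence by Theorem \ref{regularity} with $n=2$ the set $\pared E \cap A$ is a $C^{1,\a}$ curve and $\S(E;A)=\emptyset$) tells us that the free portion $\pa E \cap A$ is a smooth curve with constant mean curvature equal to $h(E)/(n-1)=h(A)$ (using the standard fact recalled in the introduction that a Cheeger set has mean curvature $\tfrac{1}{n-1}h$). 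Away from the finitely many contact points $y_\ell$, then, $\pa E$ is a union of circular arcs of the same curvature.

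The contradiction should come from a volume-fixing (or volume-increasing) competitor argument exploiting that $E$ ``floats freely'' inside $A$: since $\pa E$ meets $\pa A$ only in a finite set, one can perturb $E$ — either by a small rigid translation, or by inflating the free CMC boundary outward — and stay inside $A$ (at least locally, since $A$ is open and $E$ is compactly contained away from the contact points). The cleanest version: pick an indecomposable component of $E$, or work with all of $E$, and argue that because $\pa E\cap A$ has constant curvature $h(A)>0$ and is compact (being contained in $\overline A$) while touching $\pa A$ only finitely often, the whole free boundary consists of arcs of circles of radius $1/h(A)$ — but then either $E$ is, up to the finitely many tangency points, a disjoint union of such disks entirely inside $A$, or one can enlarge $E$ by pushing a free arc outward (replacing an arc–of–circle boundary piece by a larger one, which strictly decreases $P/|E|$ for a Cheeger-type functional, since inflating a ball of radius $<$ the Cheeger radius decreases $P/|E|$), contradicting minimality — unless doing so immediately bumps into $\pa A$, i.e. unless the free boundary is in fact pinned to $\pa A$ along a set of positive length. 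This is essentially the same ``translate $M$ until it kisses the boundary'' trick used in the proof of Proposition \ref{ognicameraconfinaconilvuoto}, repackaged for a single Cheeger set.

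Concretely the steps I would carry out are: (1) record that $E$ is $(\Lambda,r_0)$-perimeter-minimizing in $A$ (Proposition \ref{leo}'s setup, or the argument of Theorem \ref{regolare} applied with $N=1$), so $\pa E\cap A$ is a $C^{1,\a}$ curve with $\S(E;A)=\emptyset$ by Theorem \ref{regularity}; (2) invoke the constant-mean-curvature property $H_{\pa E}\equiv h(A)$ on $\pa E\cap A$ from \cite{maggibook}, so the free boundary is a finite union of circular arcs of radius $R:=1/h(A)$ (finiteness of the pieces follows from hypothesis 1) plus the assumed finiteness of $\pa E\cap\pa A$); (3) assume for contradiction $\H^1(\pa E\cap\pa A)=0$, so $\pa E\cap\pa A$ is finite; (4) produce the contradiction — there is a free circular arc $\g\subset\pa E\cap A$ whose relative interior is at positive distance from $\pa A$ and from the contact points, and along which one can either translate $E$ slightly or inflate the arc outward into $A$ while keeping $E\subset A$, strictly lowering $P(E)/|E|$, contradicting $P(E)/|E|=h(A)$; this uses the elementary monotonicity that for $r<R$ an outward normal perturbation of a radius-$R$ arc decreases the ratio $P/|E|$ of the Cheeger functional. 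Hence $\H^1(\pa E\cap\pa A)>0$.

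\textbf{Main obstacle.} The delicate point is step (4): making rigorous that one can perturb $E$ \emph{inside} $A$ and that this perturbation strictly decreases the Cheeger ratio. The subtlety is that $A$ need not be convex or connected in a helpful way near $\g$, the arc $\g$ could be long and wind close to $\pa A$ at several places, and inflating one arc might force a neighboring arc (or another component of $E$) to exit $A$; one must localize the perturbation to a small sub-arc and handle the volume bookkeeping so that the competitor is still a legitimate subset of $A$ with strictly smaller $P/|E|$. I expect one has to either (a) argue that the free boundary, being CMC of radius $R=1/h(A)$ and compact, must be a finite union of \emph{full} circles of radius $R$ (so $E$ is, up to null sets and the finitely many tangency points, a disjoint union of closed disks $\overline{B_R(z_k)}$ with $\overline{B_R(z_k)}\subset\overline A$ and $\partial B_R(z_k)$ meeting $\pa A$ only finitely often), and then translate a disk until it strikes $\pa A$, contradicting that it touched $\pa A$ only in a finite set unless the original already touched along positive length; or (b) directly exhibit a test function / competitor as in the Bandle–Marcus-type argument behind \cite{BaMa82}. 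Approach (a) feels cleanest in the plane and mirrors the ``kissing'' argument of Proposition \ref{ognicameraconfinaconilvuoto}; the one thing that needs care there is ruling out that such a disk bounds a hole of $A$, which again parallels the parenthetical remark in that earlier proof.
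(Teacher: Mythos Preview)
Your setup (steps (1)--(3)) is exactly right and matches the paper. The divergence is in step (4), and here the paper's route is both simpler and avoids precisely the obstacle you flag.

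The key observation you are missing is the \emph{direct} use of hypothesis 2). At each contact point $x\in\pa E\cap\pa A$ two circular arcs $\a_i\subset\pa B_i$ and $\a_j\subset\pa B_j$ (of radius $R=1/h(A)$) meet. By Proposition \ref{leo} (which you cite but do not exploit at this spot), the outer normal to $E$ at $x$ equals $\nu_A(x)$; hence the outer normals to $B_i$ and $B_j$ at $x$ coincide, and since both balls have the same radius, $B_i=B_j$. Iterating over the finitely many contact points, \emph{all} the arcs belong to one and the same ball $B$ of radius $R$, so for an indecomposable component $F$ of $E$ one has $\pa F=\pa B$. This already yields the contradiction: $h(A)=P(F)/|F|=P(B)/|B|=2/R=2h(A)$.

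By contrast, your proposed contradiction in approach (a) --- translate a disk until it strikes $\pa A$ --- does not actually contradict anything: a disk can sit inside $A$ touching $\pa A$ tangentially at a single point without any inconsistency. And the perturbation/inflation argument you outline is, as you yourself note, delicate to localize and make rigorous; it is simply unnecessary here. The moral is that hypothesis 2) is not just a mild regularity assumption allowing smooth analysis near $\pa A$ --- it is the mechanism that forces the arcs to glue into a full circle, after which the contradiction is a one-line radius computation rather than a variational argument.
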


\begin{remark}
\rm{
It seems that it is possible to generalize Lemma \ref{nel piano!} to dimension $n\geq 2$ by making use of Alexandrov's Theorem \cite{aleksandrov1962uniqueness} for the characterization of the Constant Mean Curvature (CMC) embedded hypersurface in $\R^n$. In this (more technical) framework hypothesis 1) can be weakened. Anyway, since we do not have to deal (at least here) with $n\geq 2$ and since for our purposes Lemma \ref{nel piano!} is all we need to complete the proof of Theorem \ref{mainthm3} we decide to not focus on this generalization.
}
\end{remark}

\begin{proof}[Proof of Lemma \ref{nel piano!}]
Assume by contradiction that $\H^1(\pa E \cap \pa A)=0$. In this case 	
	\[
	\bd_{\pa A}(\pa A\cap\pa E)=\pa E \cap \pa A.
	\] 
Let $F$ be an indecomposable component of $E$ and note that, since $E$ is a Cheeger set for $A$ it must hold
	\begin{equation}\label{e pure lui cheeger}
	\frac{P(F)}{|F|}=h(A).
	\end{equation}
Set 
	\[
	M=\bd_{\pa A}(\pa A\cap\pa F)
	\]
and $\#(M)=k<+\infty$. The well-known regularity theory for Cheeger sets, combined with the fact that $k<+\infty$ tells us that 
 	\[
 	\pa F\cap A=\bigcup_{i=1}^k \a_i
 	\] 
where each $\a_i$ is a piece of the boundary of a suitable ball $B_i$ (relatively open inside $\pa B_i$) of radius $\frac{1}{h(A)}$. The finiteness of $M$ implies that for a suitably small $r$ it holds $B_r(x)\cap M=\{x\}$ for all $x\in M$ and this means that for every $x\in M$ there exists two arcs $\a_i,\a_j$ (with possibly $i=j$) such that $x\in \overline{\a_i}\cap \overline{\a_j}$. \\
Let $B_i, B_j$ the balls from which such arcs come from: $\a_i\in \pa B_i$, $\a_j \in \pa B_j$. Hypothesis $2)$ implies that the outer unit normal to $B_i$ and to $B_j$ at $x$ must coincide with $\nu_A(x)$ and hence the balls $B_i$ and $B_j$ must coincide as well. Since $k<+\infty$, by iterating this argument we conclude that there exists only one ball $B$ of radius $\frac{1}{h(A)}$ such that $M\subset \pa B$ and $\pa F\cap A= \pa B\cap A$. In particular $\pa F$ is equal to $\pa B$ and by exploiting \eqref{e pure lui cheeger} we bump into a contradiction
\begin{eqnarray*}
\frac{P(F)}{|F|}&=&h(A)\\
\frac{P(B)}{|B|}&=&h(A)\\
\frac{\frac{2\pi}{h(A)}}{\frac{\pi}{h(A)^2}}&=&h(A)\\
\frac{\frac{2}{h(A)}}{\frac{1}{h(A)^2}}&=&h(A)\\
2h(A)&=&h(A).
\end{eqnarray*}
The contradiction comes from the fact that we have assumed $\H^1(\pa E \cap \pa A)=0$, hence $\H^1(\pa E \cap \pa A)>0$ and the proof is complete.\\
\end{proof}

\subsection{Proof of Theorem \ref{mainthm3}} \label{cpt 4 sbsct proof of mainthm3}
\begin{proposition}\label{struttura singolaritapiano}
Let $N\geq 2$ and $\Om$ be an open, bounded and connected set with $C^{1}$ boundary and finite perimeter in the plane.  Let $\E$ be a Cheeger $N$-cluster of $\Om$. Then $\S(\E;\Om)=\S(\E(0);\Om)$ is a finite union of points.
\end{proposition}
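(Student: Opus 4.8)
The plan is to argue by contradiction, reducing the statement to a one–dimensional picture around an accumulation point of the singular set and then ruling that picture out by a first–variation computation.

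Suppose $\S(\E(0);\Om)$ is infinite. By Corollary~\ref{egli e chiuso} it is closed, and since $\Om$ is bounded it has an accumulation point $x_0$, which (again by that corollary) lies in $\Om$. By Theorem~\ref{mainthm2} we may write $x_0\in\pa\E(i)\cap\pa\E(j)\cap\pa\E(0)$ for two distinct indices $i,j\in\{1,\dots,N\}$; by Lemma~\ref{densita dei punti di omega} the densities $\vt_2(x_0,\E(h))$ all exist and $\sum_{h=0}^N\vt_2(x_0,\E(h))=\vt_2(x_0,\Om)=1$, so the two density–$\tfrac12$ chambers $\E(i),\E(j)$ already saturate this sum and $x_0$ meets the boundary of no other non–zero chamber; relabel so that $i=1$, $j=2$. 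Since each chamber $\E(k)$ with $k\neq 0$ is closed (Remark~\ref{closed}) and $x_0\notin\E(k)$ for $k\geq 3$, I would fix $\rho>0$ with $B:=B_\rho(x_0)\cc\Om$ and $B\cap\E(k)=\emptyset$ for all $k\geq 3$, so that inside $B$ only $\E(0),\E(1),\E(2)$ are present. By Theorem~\ref{mainthm1}~(i) and Theorem~\ref{regolare}, $\pa\E(1)$ and $\pa\E(2)$ are $C^{1,\a}$ curves near $x_0\in\pared\E(1)\cap\pared\E(2)$, and since $|\E(1)\cap\E(2)|=0$ their blow-ups at $x_0$ are complementary half-planes; hence the two curves are tangent at $x_0$ with opposite normals. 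After further shrinking $\rho$ and taking the common tangent line as a coordinate axis, I would write $\pa\E(1)\cap B=\mathrm{graph}\,u$, $\pa\E(2)\cap B=\mathrm{graph}\,v$ with $u,v\in C^{1,\a}(I)$ on an open interval $I\ni 0$, $u(0)=v(0)=0$, with $\E(1)\cap B$ lying below $\mathrm{graph}\,u$ and $\E(2)\cap B$ above $\mathrm{graph}\,v$. Disjointness forces $u\leq v$ on $I$, so $\E(0)\cap B=\{(t,s):t\in I,\ u(t)<s<v(t)\}$, and $\S(\E(0);\Om)\cap B$ is exactly the set of $(t,u(t))$ with $u(t)=v(t)$ that are limit points of $\{u<v\}$.

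The core step is to show that $\{t\in I:u(t)<v(t)\}$ has no connected component $(a,b)$ with $[a,b]\subset I$. Suppose such an $(a,b)$ existed; then $u(a)=v(a)$, $u(b)=v(b)$, and the lens $S:=\{(t,s):a<t<b,\ u(t)<s<v(t)\}$ is a subset of $\E(0)\cap B$ (the vertical segment joining $(t,u(t))\in\pa\E(1)\cap B$ to $(t,v(t))\in\pa\E(2)\cap B$ stays in the ball $B$ by convexity), with $|S|=\int_a^b(v-u)\,\d t>0$. Moving $S$ from $\E(0)$ into $\E(1)$ replaces the sub-arc $\mathrm{graph}(u|_{[a,b]})$ of $\pa\E(1)$ by $\mathrm{graph}(v|_{[a,b]})$, leaves every other chamber unchanged, and produces an admissible competitor $N$-cluster $\E'$ with
\[
P(\E'(1))=P(\E(1))+\int_a^b\Big(\sqrt{1+(v')^2}-\sqrt{1+(u')^2}\Big)\,\d t,\qquad |\E'(1)|=|\E(1)|+|S|.
\]
Minimality of $\E$ together with \eqref{selfcheeger} gives $\int_a^b(\sqrt{1+(v')^2}-\sqrt{1+(u')^2})\,\d t\geq h(\E(1))\,|S|$; moving $S$ into $\E(2)$ instead gives the symmetric inequality $\int_a^b(\sqrt{1+(u')^2}-\sqrt{1+(v')^2})\,\d t\geq h(\E(2))\,|S|$. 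Adding the two yields $0\geq\big(h(\E(1))+h(\E(2))\big)|S|>0$, a contradiction. Hence $\{u<v\}$ is a union of at most two intervals each reaching an endpoint of $I$, so $\{u=v\}$ is a closed subinterval of $I$ whose relative boundary has at most two points; thus $\S(\E(0);\Om)\cap B$ is finite, contradicting the choice of $x_0$ as an accumulation point. Since $\S(\E;\Om)=\S(\E(0);\Om)$ by Theorem~\ref{mainthm2}, the proof is complete.

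I expect the delicate part to be the local reduction in the second paragraph: establishing that around $x_0$ one genuinely has a two–chamber cusp, with $\pa\E(1),\pa\E(2)$ in the ordered graph form $u\leq v$ and $\E(0)\cap B$ equal to the sliver between them. This rests on combining the $C^{1,\a}$–regularity of the chambers (Theorems~\ref{mainthm1} and \ref{regolare}), the density–$\tfrac12$ description of $\pa\E(k)$ from Lemma~\ref{densita dei punti di omega}, and the complementarity of the blow-ups forced by $|\E(1)\cap\E(2)|=0$. Once that picture is in place the first–variation estimate and the final combinatorial conclusion are routine; in fact the argument shows directly that every point of $\S(\E(0);\Om)$ has a neighbourhood meeting the singular set in at most two points.
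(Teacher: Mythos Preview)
Your proof is correct and follows essentially the same route as the paper: localize at a point of $\pa\E(1)\cap\pa\E(2)$, write both boundaries as ordered $C^{1,\a}$ graphs $u\le v$ over a common interval, and show that $\{u<v\}$ has no compactly contained component, so that the singular set meets the neighbourhood in at most two points. The only difference is organizational: where you rule out an interior lens $S$ by an inline competitor argument (moving $S$ into $\E(1)$ and then into $\E(2)$ and summing the two minimality inequalities to get $0\ge (h(\E(1))+h(\E(2)))|S|$), the paper instead invokes Lemma~\ref{buone componenti connesse}, which says any indecomposable component of $\E(0)$ compactly contained in $\Om$ must border at least three chambers; the proof of that lemma is the one-sided version of exactly the same competitor computation. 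A minor technical point: the paper works in a small cube $Q_\e$ rather than a ball, which makes the common-interval graph parametrization and the identification of $\E(0)\cap Q_\e$ with the sliver cleaner; your convexity remark handles the one place this matters, namely $S\subset\E(0)\cap B$.
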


\begin{proof}
We prove that $\S(\E(0);\Om)$ has no accumulation point. In this way we show that $\S(\E(0);\Om)$ is a closed (thanks to Theorem \ref{mainthm2}), bounded set of $\R^2$ (since $\Om$ is bounded) without accumulation points which means that $\S(\E(0);\Om)$ must be a finite union of points.\\

Set $\S_0=\S(\E(0);\Om)$ for the sake of brevity. Let $\xi\in \pa\E(i)\cap \pa\E(j)$ for some $1\leq i<j\leq N$ that without loss of generality we assume to be $i=1,j=2$. We can assume (up to a translation) also that $\xi=(0,0)$. Since $\pa\E(1),\pa\E(2)$ are regular, up to a rotation we can find a small closed cube 
	\[
	Q_{\e}:=[-\e,\e]\times[-\e,\e]\cc\Omega
	\]
centered at $\xi=(0,0)$ and two $C^1$ functions $f_1,f_2:[-\e,\e]\rightarrow \R$ such that $f_1(x)\leq f_2(x)$ for all $x\in[-\e,\e]$ and:
\begin{align*}
\E(1)\cap Q_{\e}&=\{(x,y) \in Q_{\e} \ | \ -\e \leq y\leq  f_1(x) \},\\
\pa \E(1)\cap Q_{\e} &=\{(x,f_1(x)) \ | \ x\in [-\e,\e]\},\\
\E(2)\cap Q_{\e}&=\{(x,y) \in Q_{\e} \ | \ \  f_2(x)\leq y \leq \e  \},\\
\pa \E(2)\cap Q_{\e}&=\{(x,f_2(x)) \ | \ x\in [-\e,\e] \}\\
\pa \E(2)\cap \pa \E(1)\cap Q_{\e}&=\{(x,y) \in Q_{\e} \ | \ \ y=f_1(x)=f_2(x)\leq \e  \},\\
\E(0)\cap Q_{\e}&=\{(x,y)\in Q_{\e} \ | \ -\e\leq f_1(x)< y< f_2(x)\leq \e\},\\
\E(k)\cap Q_{\e}&=\emptyset \ \ \text{for all $k\geq 3$},
\end{align*}
(see Figure \ref{compoconne}). Since the blow-up of $\pa\E(1)\cap\pa\E(2)$ at $\xi=(0,0)$ is a line, up to further decrease $\e$, we can also assume that $\E(1)\cap Q_{\e}$ and $\E(2)\cap Q_{\e}$ are indecomposable, which is equivalent to say:
	\[
	|f_1(x)|<\e, \ \ \ |f_2(x)|<\e \ \ \ \ \forall \ x\in [-\e,\e].
	\]
We consider the set
	\begin{align*}
	E_0&:= \{ x\in [-\e,\e] \ | \ f_1(x)<f_2(x) \}.
	\end{align*}
which is relatively open inside $[-\e,\e]$ (is the counter-image of the open set $(-2\e,0)$ through the continuous function $f_1-f_2$).
\begin{figure}
\begin{center}
 \includegraphics[scale=1]{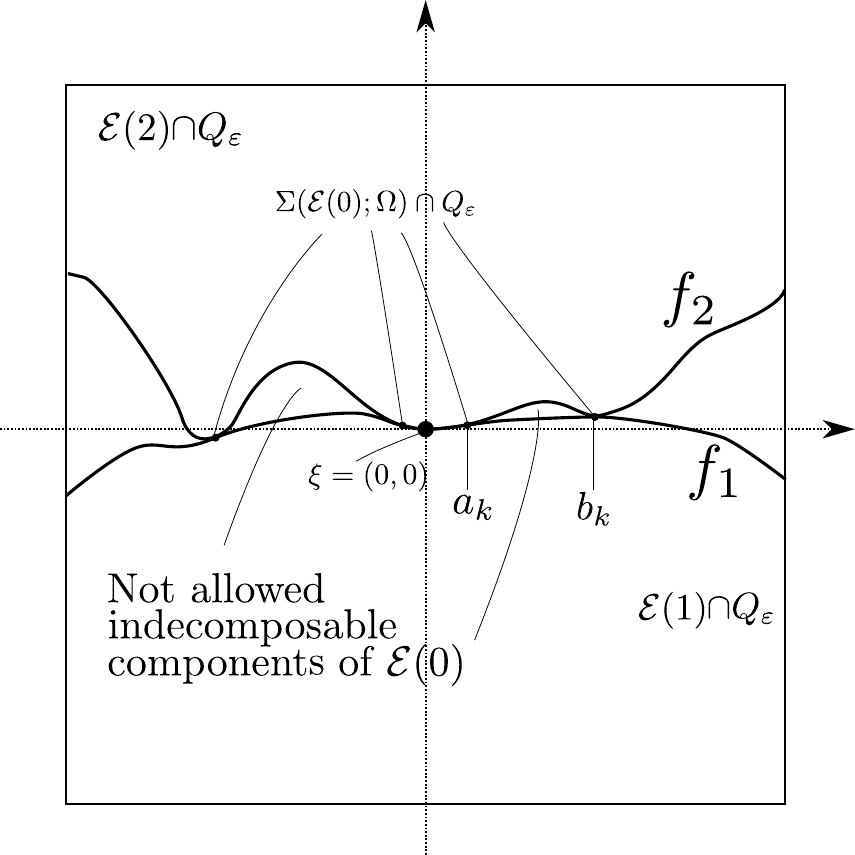}\caption{{\small This kind of behavior contradicts the minimality property of $\E$, in particular it contradicts Lemma \ref{buone componenti connesse}.}}\label{compoconne}
 \end{center}
\end{figure}
 Hence, $E_0$ must be the union of countably many disjoint (open) intervals:
	\[
	E_0=[-\e,a)\cup (b,\e] \cup \left(\bigcup_{k=2}^{+\infty}(a_k,b_k)\right)
	\]
for $\{a_k\}_{k=1}^+{\infty},\{b_k\}_{k=1}^+{\infty}\subset [-\e,\e]$  (with a slight abuse of notation we are allowing also the possible cases $a=-\e$, $b=\e$ or even $a=a_1$,$b_{\infty}=b$ as in Figure \ref{compoconne}). It is immediate that each
	\[
	A_k:=\{(x,y)\in Q_{\e} \ | \ a_k<x<b_k, \ f_1(x) < y <f_2(x) \}
	\]
is an indecomposable component of $\E(0)$. Observe that $E_k\cc Q_{\e} \cc \Om$ is an indecomposable component of $\E(0)$ that share its boundary with exactly two chambers ($\E(1), \E(2)$) and hence contradicts Lemma \ref{buone componenti connesse}. This means that the only possibility is
	\[
	E_0=[-\e,a)\cup (b,\e] 
	\]
for some $a,b\in [-\e,\e]$. By possibly decreasing $\e$ we can assume that $(-\e,f_1(-\e) ),(\e,f_1(\e) )\notin \S_0\cap Q_{\e}$. The only possibilities remained are
\begin{itemize}
\item[1)] $a=-\e$ and $b=\e$, thus $\S_0\cap Q_{\e}=\emptyset$;
\item[2)] $a\neq -\e$ and $b = \e$, thus $\S_0\cap Q_{\e}=\{(a,f_1(a))\}=\{(a,f_2(a))\}$;
\item[3)] $a=-\e$ and $b\neq \e$, thus $\S_0\cap Q_{\e}=\{(b,f_1(b))\}=\{(b,f_2(b))\}$;
\item[4)] $a\neq -\e$ and $b\neq \e$, thus $\S_0\cap Q_{\e}=\{(a,f_1(a)), (b,f_1(b))\}=\{(a,f_2(a)), (b,f_2(b))\}$.
\end{itemize} 
In particular $\#(\S_0\cap Q_{\e})\leq 2$ which means that $\S_0$ has no accumulation points.
\end{proof}

We now exploit the stationarity of Cheeger $N$-clusters in order to derive information on their structure. 

\begin{proposition} \label{curvature}
Let $N\geq 2$ and $\Om$ be an open, bounded and connected set with $C^{1}$ boundary and finite perimeter in the plane. Let $\E$ be a Cheeger $N$-cluster of $\Om$. For every $j,k=0,\ldots,N$, $k \neq j$ the set
$$E_{j,k}:=[\pa \E(j)\cap \pa \E(k) \cap\Om]\setminus \S(\E(0);\Om)$$ 
is relatively open in $\pa \E(j)\cap\pa\E(k)\cap \Om$ and is the finite union of segments and circular arcs. Moreover the set $\E(j)$ has constant curvature $C_{j,k}$ on each open set $A$ such that $A\cap \pa \E(j) \subseteq E_{j,k}$. The constant $C_{j,k}$ is equal to:
\begin{equation}
C_{j,k}= \left\{
\begin{array}{cc}
\frac{|\E(k)|h(\E(j))-|\E(j)|h(\E(k))}{|\E(j)|+|\E(k)|}, & \text{if $k\ne 0$}\\
 & \\
h(\E(j)), & \text{if $k=0$}.
\end{array}
\right.
\end{equation}
As a consequence the set $\E(k)$ has constant curvature $C_{k,j}=-C_{j,k}$ on each open set $A$ such that $A\cap \pa \E(k)\subseteq  E_{k,j}(=E_{j,k})$.
\end{proposition}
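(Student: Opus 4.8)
The plan is to read off the constancy and the value of the curvature from the fact that a Cheeger $N$-cluster is stationary for the functional $\E\mapsto\sum_i P(\E(i))/|\E(i)|$ under inner variations, and then to identify the interfaces geometrically by elliptic regularity in the plane. The relative openness of $E_{j,k}$ in $\pa\E(j)\cap\pa\E(k)\cap\Om$ needs no work: $\S_0:=\S(\E(0);\Om)$ is closed by Corollary~\ref{egli e chiuso}, hence $E_{j,k}=(\pa\E(j)\cap\pa\E(k)\cap\Om)\setminus\S_0$ is relatively open.

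\textbf{Localisation.} Fixing $j\ge1$ and $x_0\in E_{j,k}$, I would first produce a ball $B\cc\Om$ centred at $x_0$ meeting only the relevant chambers. Since $n=2\le7$, each chamber $\E(i)$ with $i\ge1$ is regular (Theorem~\ref{mainthm1}(iii)), so by Lemma~\ref{densita dei punti di omega} its density is always $0,\tfrac12$ or $1$; as $x_0\in\pared\E(j)\cap\pared\E(k)$ and the chambers are disjoint, $\vt_n(x_0,\E(j))=\vt_n(x_0,\E(k))=\tfrac12$, and combining $x_0\notin\S_0$ with the characterisation of $\S_0$ in Proposition~\ref{prima caratterizazione} shows that no other chamber (the external one included, when $k\ge1$) has positive density at $x_0$. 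Because the chambers $\E(i)$, $i\ge1$, are closed (Remark~\ref{closed}) and $\E(0)$ is open, a small enough $B=B_\rho(x_0)$ meets only $\E(j),\E(k)$ (if $k\ge1$) or only $\E(j)$ (if $k=0$); up to null sets $B=(\E(j)\cap B)\cup(\E(k)\cap B)$ in the first case and $B=(\E(j)\cap B)\cup(\E(0)\cap B)$ in the second, the common reduced boundary inside $B$ being the $C^{1,\a}$ arc $\Gamma:=B\cap\pa\E(j)$ (Theorem~\ref{mainthm1}(i)), with $\nu_{\E(k)}=-\nu_{\E(j)}$ along it.

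\textbf{The variation.} Given $X\in C^1_c(B;\R^2)$ with flow $\phi_t$, for small $t$ I would set $\E_t(j):=\phi_t(\E(j))$ and, when $k\ge1$, also $\E_t(k):=\phi_t(\E(k))$, leaving the other chambers fixed (when $k=0$ only $\E(j)$ moves); by the localisation step this is again an $N$-cluster in $\Om$, so minimality of $\E$ forces
$$g(t):=\frac{P(\E_t(j))}{|\E_t(j)|}+\frac{P(\E_t(k))}{|\E_t(k)|}$$
(second term absent if $k=0$) to be minimal at $t=0$. Using the standard first-variation formulas for perimeter and volume (\cite[Ch.~17]{maggibook}) and writing $\delta P_i:=\int_\Gamma\dive_{\E(i)}X\,\d\H^1$, $\delta V_i:=\int_\Gamma X\cdot\nu_{\E(i)}\,\d\H^1$, one has $\delta P_j=\delta P_k$ (the tangential divergence ignores orientation) and $\delta V_k=-\delta V_j$ (since $\E(k)\cap B=B\setminus\E(j)$ up to a null set). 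Setting $g'(0)=0$ and substituting $P(\E(i))=h(\E(i))|\E(i)|$ from \eqref{selfcheeger}, a short computation gives
$$\int_\Gamma\dive_{\E(j)}X\,\d\H^1=C_{j,k}\int_\Gamma X\cdot\nu_{\E(j)}\,\d\H^1\qquad\text{for every }X\in C^1_c(B;\R^2),$$
with $C_{j,k}=\dfrac{|\E(k)|\,h(\E(j))-|\E(j)|\,h(\E(k))}{|\E(j)|+|\E(k)|}$ if $k\ge1$ and $C_{j,k}=h(\E(j))$ if $k=0$ (there $g'(0)=0$ reads simply $\delta P_j=h(\E(j))\,\delta V_j$). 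This says $\E(j)$ has constant distributional curvature $C_{j,k}$ on $\Gamma$ (\cite[Section~17.3]{maggibook}); since $\Gamma$ is already $C^{1,\a}$, Schauder estimates upgrade it to a smooth arc of constant curvature, i.e.\ an arc of a circle of radius $1/|C_{j,k}|$ if $C_{j,k}\ne0$ and a segment otherwise. Covering any open $A$ with $A\cap\pa\E(j)\subseteq E_{j,k}$ by such balls yields the stated structure, and the assertion for $\E(k)$ follows from $\nu_{\E(k)}=-\nu_{\E(j)}$ and $E_{k,j}=E_{j,k}$, which flips the sign to $C_{k,j}=-C_{j,k}$.

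\textbf{Finiteness, and the main obstacle.} Every connected component of $E_{j,k}$ is a relatively open sub-arc of the compact $C^{1,\a}$ curve $\pa\E(j)$, so its endpoints lie in $\pa\E(j)\cap\pa\E(k)$. An interior endpoint must lie in $\S_0$, which is finite by Proposition~\ref{struttura singolaritapiano}; and infinitely many distinct components cannot cluster at one point of $\S_0$, for that would create infinitely many distinct points of $\S_0$ (transition points between adjacent interfaces, which are triple points $\pa\E(j)\cap\pa\E(k')\cap\pa\E(0)$). An endpoint on $\pa\Om$ is impossible when $k\ge1$ (it would be a density-$\tfrac12$ point for the three disjoint sets $\E(j),\E(k),\Om^c$, as in Corollary~\ref{egli e chiuso}); when $k=0$ one must still exclude accumulation of contact points on $\pa\Om$, and this is where I expect the main obstacle to lie: since (Remark~\ref{remark controfinitezza}) the analogous finiteness \emph{fails} for $\S(\E(0);\pa\Om)$, the argument has to use genuinely both the $C^1$ regularity of $\pa\Om$ and the global minimality of $\E$ — which should forbid a thin external sliver pinched between a chamber and $\pa\Om$. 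A secondary technical point is checking that the coupled inner variation of $\E(j)$ and $\E(k)$ is an admissible competitor $N$-cluster, which is precisely what the localisation step secures.
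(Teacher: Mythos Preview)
Your approach is essentially the paper's: localise away from $\S_0$ to a ball meeting only $\E(j)$ and $\E(k)$, take an inner variation, and read off the curvature from the first-variation identity. Two small differences are worth noting. First, for $j,k\ge 1$ the paper does not derive constant curvature via ``distributional curvature $+$ Schauder'': it observes that in the localised ball $\pa\E(j)\cap\pa\E(k)$ solves a volume-constrained isoperimetric problem, hence is analytic with constant mean curvature by \cite[Theorems~17.20, 24.4]{maggibook}, and then uses the variation only to compute the value $C_{j,k}$. Your route is equally valid.

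Second, and more importantly: the ``main obstacle'' you single out for the case $k=0$ (finiteness of arcs of $E_{j,0}$, and the possibility of accumulation at $\pa\Om$) is not treated in the paper by any new argument at all. The paper dispatches the entire case $k=0$ in one line by observing that $\E(j)$ is a Cheeger set of the open set $\Om_0:=\Om\setminus\bigcup_{i\neq j}\E(i)$, so that $E_{j,0}=\pa\E(j)\cap\Om_0$ is the free boundary of a planar Cheeger set and is therefore a finite union of circular arcs of curvature $h(\E(j))$ by the classical regularity theory for planar Cheeger sets. So your anticipated obstacle is removed by quoting existing theory, not by a bespoke argument; your worry about Remark~\ref{remark controfinitezza} is a red herring here, since that remark concerns $\S(\E(0);\pa\Om)$, not the free boundary of a single chamber.

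For $j,k\ge 1$, your finiteness argument can be streamlined: since $\S_0$ is finite and every endpoint of a component of $E_{j,k}$ inside $\Om$ lies in $\S_0$ (while endpoints on $\pa\Om$ are excluded by the three-sets density contradiction you already give), each connected component of $(\pa\E(j)\cap\Om)\setminus\S_0$ lies entirely in a single $E_{j,k}$, and there are only finitely many such components. This avoids the somewhat vague ``transition points'' sentence.
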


\begin{proof}
If $k=0$ (or $j=0$) we just notice that $\E(j)$ is a Cheeger set for 
	\[
	\Om_0=\Om\setminus \bigcup_{\substack{i=1,\\ i\neq j}}^{N} \E(i)
	\]
so the free boundary $E_{j,0}$ is the finite union of segments and circular arcs and $\E(j)$ has constant mean curvature $C_{j,0}=h(\E(j))$ on each open set $A$ such that $A\cap \pared \E(j)\subseteq E_{j,0}$.\\

Thus, we consider a couple $j,k\in \{1,\ldots, N\}$ such that 
	\[
	[\pa\E(i)\cap\pa\E(k)\cap \Om]\setminus  \S(\E(0);\Om)\neq \emptyset
	\]
(otherwise there is nothing to prove and the proposition is trivial). The set $\S(\E(0);\Om)$ is closed and is the finite union of points (thanks to Lemma \ref{struttura singolaritapiano}). Hence 
	\[
	E_{j,k}:=[\pa \E(j)\cap \pa \E(k) \cap\Om]\setminus \S(\E(0);\Om)
	\]
is relatively open in $\pa \E(j)\cap\pa\E(k) \cap \Om$. For every $x\in E_{j,k}$, by closedness, there exists a ball $B_r(x)$ such that 
	\[
	B_r(x)\cap  \E(i)=\emptyset \ \ \ \ \forall \ i=1,\ldots,N, \  \ i\neq j,k.
	\]
Note that, up to further decrease the value of $r$ it must hold as well
	\[
	B_r(x)\cap \E(0)=\emptyset.
	\]
Indeed if this is not the case, we would have that $x\in \pa \E(0)\cap \E(0)^{\zero}$ and thus (thanks to Proposition \ref{prima caratterizazione}) $x\in \S(\E(0);\Om)$ which is a contradiction since $x\in E_{j,k}$. Hence, because of the minimality of $\E$, the set $\pa\E(j)\cap \pa\E(k)\cap B_r(x)$ must solve an isoperimetric problem with volume constraint inside $B_r(x)$ and by exploiting stationarity it is possible to prove that each solution to an isoperimetric problem with volume constraint must be an analytic constant mean curvature hypersurface (\cite[Theorems 17.20, 24.4 ]{maggibook}). Set $C_{j,k}$ and $C_{k,j}$ to be respectively the value of the mean curvature of $\E(j)$ and of $\E(k)$ in $B_r(x)$. Observe that, since $\E(k)\cap B_r= \E(j)^c\cap B_r$ it holds trivially that $C_{k,j}=-C_{j,k}$. Let us compute the (constant) value of $C_{j,k}$.\\

Consider a map $T\in C^{\infty}_c(B_r;\R^2)$, define for all $|t|<\e$ the diffeomorphism $f_t(y)=y+t T(y)$ and the cluster $\E_t:=\{f_t(\E(i))\}_{i=1}^N$. Of course, for $t$ suitably small, $\E_t\Delta \E \subset\subset B_r $. Note that $\{f_t \ | \ -\e<t<\e \}$ is a local variation in $B_r$ and that $T$ is its initial velocity. By minimality it holds: 
$$\frac{P(\E(j))}{|\E(j)|}+\frac{P(\E(k))}{|\E(k)|} \leq \frac{P(\E_t(j))}{|\E_t (j)|}+\frac{P(\E_t(k))}{|\E_t (k)|}, \ \ \ \forall \ |t|<\e.$$
Thus  
\begin{equation}\label{mi sparooo}
0\leq \frac{d}{dt}\Big{|}_{t=0}\frac{P(\E_t(j))}{|\E_t (j)|}+\frac{d}{dt}\Big{|}_{t=0} \frac{P(\E_t(k))}{|\E_t (k)|}.
\end{equation}
With some easy computations
\begin{eqnarray*}
\frac{d}{dt}\Big{|}_{t=0}\frac{P(\E_t(j))}{|\E_t (j)|} &=&\frac{|\E(j)|\frac{d}{dt}\Big{|}_{t=0}P(\E_t(j))-P(\E(j))\frac{d}{dt}\Big{|}_{t=0}|\E_t(j)|}{|\E (j)|^2},\\
\text{}\\
\frac{d}{dt}\Big{|}_{t=0}P(\E_t(j)) &=&C_{j,k} \int_{\pa \E(j)\cap \pa \E(k) \cap B_r}(T(y)\cdot \nu_{\E(j)}(y))  \d \H^1(y)\\
\frac{d}{dt}\Big{|}_{t=0}|\E_t(j)| &=& \int_{\pa \E(j)\cap \pa \E(k) \cap B_r}(T(y)\cdot \nu_{\E(j)}(y)) \d \H^1(y).
\end{eqnarray*}
where we have used the facts that the mean curvature exists and that it is constantly equal to $C_{j,k}$ in $B_r$ (and hence on $\pa\E(j)\cap \pa \E(k)\cap B_r$). By denoting with 
$$ f_j=\int_{\pa \E(j)\cap \pa \E(k) \cap B_r}(T(y)\cdot \nu_{\E(j)}(y))  \d \H^1(y)$$
$$ f_k=\int_{\pa \E(j)\cap \pa \E(k) \cap B_r}(T(y)\cdot \nu_{\E(k)}(y))  \d \H^1(y),$$
we can write:
\begin{eqnarray*}
\frac{d}{dt}\Big{|}_{t=0}\frac{P(\E_t(j))}{|\E_t (j)|} &=&\frac{|\E(j)| f_j C_{j,k}-P(\E(j))f_j}{|\E (j)|^2},\\
\frac{d}{dt}\Big{|}_{t=0}\frac{P(\E_t(k))}{|\E_t (k)|} &=&\frac{|\E(k)| f_k C_{k,j}-P(\E(k))f_k}{|\E (k)|^2},\\
\end{eqnarray*}
that plugged into \eqref{mi sparooo},by observing that $f_j=-f_k$, lead to the relation:
\begin{eqnarray*}
0&\leq &\frac{d}{dt}\Big{|}_{t=0}\frac{P(\E_t(j))}{|\E_t (j)|} +\frac{d}{dt}\Big{|}_{t=0}\frac{P(\E_t(k))}{|\E_t (k)|} \\
&=&\frac{|\E(i)| f_j C_{j,k}-P(\E(j))f_j}{|\E(j)|^2}+\frac{|\E(k)| f_k C_{k,j}-P(\E(k))f_k}{|\E(k)|^2}\\
&=&f_j\left[ \frac{|\E(j)|  C_{j,k}-P(\E(j))}{|\E (j)|^2}-\frac{|\E(k)|  C_{k,j}-P(\E(k))}{|\E (k)|^2}\right].
\end{eqnarray*}
By choosing a $T_1$ such that $f_j$ is positive and then a $T_2$ such that $f_j$ is negative we conclude that 
\begin{eqnarray*}
0&=&\frac{|\E(j)|  C_{j,k}-P(\E(j))}{|\E (j)|^2}-\frac{|\E(k)|  C_{k,j}-P(\E(k))}{|\E (k)|^2}.\\
\end{eqnarray*}
Finally, by exploiting $C_{j,k}=-C_{k,j}$ we rach
\begin{eqnarray*}
0&=&\frac{ C_{j,k}}{|\E (j)|}-\frac{P(\E(j))}{|\E (j)|^2}-\frac{  C_{k,j}}{|\E (k)|}+\frac{P(\E(k))}{|\E (k)|^2}\\
&=&\frac{ C_{j,k}}{|\E (j)|}-\frac{P(\E(j))}{|\E (j)|^2}+\frac{  C_{j,k}}{|\E (k)|}+\frac{P(\E(k))}{|\E (k)|^2},
\end{eqnarray*}
that can be re-arranged as:
\begin{eqnarray*}
C_{i,k}(|\E(j)|+|\E(k)|)&=&h(\E(j))|\E (k)|-h(\E(k))|\E (j)|,\\
C_{j,k}&=&\frac{h(\E(j))|\E (k)|-h(\E(k))|\E (j)|}{|\E(j)|+|\E(k)|}.
\end{eqnarray*}
In particular, since $C_{j,k}$ do not depend on $x\in E_{j,k}$ and since the ambient space dimension is $n=2$, $E_{j,k}$ must be a finite union of circular arcs or segments with curvature $|C_{j,k}|$.
\end{proof}

Our last proposition of the section put together Lemma \ref{nel piano!} Proposition \ref{struttura singolaritapiano} and Proposition \ref{curvature} and tells us that the interior chambers of a Cheeger $N$-cluster are always indecomposable. We are making strong use of Proposition \ref{struttura singolaritapiano} which does not holds on $\pa\Om$ (see Figure \ref{controfinitezza} and Remark \ref{remark controfinitezza}) and that is why we cannot extend the proof of the Proposition \ref{connessionecamere} to all the chambers. 

\begin{proposition}\label{connessionecamere}
Let $N\geq 2$ and $\Om$ be an open, bounded and connected set with $C^{1}$ boundary and finite perimeter in the plane. Let $\E$ be a Cheeger $N$-cluster for $\Om$. Then, every chamber $\E(i)\cc \Om$ for $i\neq 0$ is indecomposable. 
\end{proposition}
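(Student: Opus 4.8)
The plan is to argue by contradiction: assume some chamber $\E(i)\cc\Om$, $i\neq 0$, is decomposable. By Theorem \ref{mainthm1} and Remark \ref{closed} the set $\E(i)$ is closed and its boundary is a compact $C^{1,\a}$ hypersurface contained in $\Om$; moreover, by \eqref{selfcheeger} and the isoperimetric inequality each indecomposable component of $\E(i)$ has volume bounded below by a positive constant, so there are only finitely many of them and they have pairwise disjoint closures. Hence I may write $\E(i)=F\sqcup G$ with $F$ one component, $G$ the union of the others, $F,G\cc\Om$, $|F|,|G|>0$ and $P(\E(i))=P(F)+P(G)$. Since, as recalled in the proof of Proposition \ref{curvature}, $\E(i)$ is a Cheeger set of $A_i:=\Om\setminus\bigcup_{j\neq i}\E(j)$, so that $P(\E(i))/|\E(i)|=h(A_i)=h(\E(i))$, and since each component lies in $A_i$, a standard averaging argument forces $P(F)/|F|=P(G)/|G|=h(\E(i))$; in particular $F$ is itself a Cheeger set of $A_i$ and $h(F)=h(\E(i))$.

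First I record that the $N$-cluster $\E'$ obtained from $\E$ by replacing the chamber $\E(i)$ with $F$ (the components of $G$ no longer belonging to any chamber) is again a Cheeger $N$-cluster, because $P(F)/|F|=P(\E(i))/|\E(i)|$ makes its energy equal to $H_N(\Om)$. Next I would apply Lemma \ref{nel piano!} to $F$, viewed as a Cheeger set of $A_i$: hypothesis $(1)$ holds since $\bd_{\pa A_i}(\pa A_i\cap\pa F)$ is contained in the union of $\S(\E(0);\Om)$ with the finitely many endpoints of the circular arcs composing $\pa F$ — a finite set, by Propositions \ref{struttura singolaritapiano} and \ref{curvature} — and hypothesis $(2)$ holds because at a point of $\pa A_i\cap\pa F$ only $\E(i)$ and a single solid chamber meet, so near such a point $A_i$ agrees with $\E(i)$ up to (at most) a zero-density cusp of $\E(0)$ and therefore has a half-space blow-up there. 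Lemma \ref{nel piano!} then yields $\H^1(\pa F\cap\pa A_i)>0$; as $F\cc\Om$ this set misses $\pa\Om$, hence lies in $\bigcup_{j\geq 1,\,j\neq i}\pa\E(i)\cap\pa\E(j)$, and there is an index $j_0\neq i$, $j_0\geq 1$, with $\H^1(\pa F\cap\pa\E(j_0))>0$.

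The contradiction then comes from playing $\E$ and $\E'$ against each other along the interface $\pa F\cap\pa\E(j_0)$. Since $\S(\E(0);\Om)$ and $\S(\E'(0);\Om)$ are both finite (Proposition \ref{struttura singolaritapiano}), the set $\bigl[\pa F\cap\pa\E(j_0)\cap\Om\bigr]\setminus\bigl(\S(\E(0);\Om)\cup\S(\E'(0);\Om)\bigr)$ still has positive $\H^1$-measure; pick a point $x$ in it. In a neighbourhood of $x$ the chambers $\E(i)$ (in $\E$) and $F$ (in $\E'$) coincide, because $F$ is a union of whole components of $\E(i)$ with disjoint closures; consequently the distributional curvature at $x$ of the chamber through $x$ is one and the same number whether computed from $\E$ or from $\E'$. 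Proposition \ref{curvature} applied to $\E$ evaluates it as $\frac{|\E(j_0)|\,h(\E(i))-|\E(i)|\,h(\E(j_0))}{|\E(i)|+|\E(j_0)|}$, while applied to $\E'$ it evaluates it as $\frac{|\E(j_0)|\,h(F)-|F|\,h(\E(j_0))}{|F|+|\E(j_0)|}$ with $h(F)=h(\E(i))$. Equating the two and clearing denominators collapses, after cancelling the mixed terms, to $|\E(j_0)|\bigl(h(\E(i))+h(\E(j_0))\bigr)\bigl(|F|-|\E(i)|\bigr)=0$, so $|F|=|\E(i)|$, which contradicts $|G|>0$. Thus every interior chamber is indecomposable.

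I expect the main obstacle to be the verification of the hypotheses of Lemma \ref{nel piano!} for the ambient set $A_i$, which is not globally $C^1$: this is exactly the place where the finiteness of the singular set in the interior of $\Om$ (Proposition \ref{struttura singolaritapiano}) is indispensable, and where the analogous statement on $\pa\Om$ would fail. A secondary point requiring care is checking that at the chosen point $x$ the two invocations of Proposition \ref{curvature} really refer to the curvature of the same arc with the same orientation, so that equating the two expressions is legitimate.
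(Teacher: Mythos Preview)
Your argument is correct and follows the paper's strategy: delete a component to form a second Cheeger $N$-cluster, invoke Lemma~\ref{nel piano!} to locate a positive-length interface with another solid chamber, then derive a contradiction by computing the curvature of that interface twice via Proposition~\ref{curvature}. The one twist is \emph{which} piece you examine. The paper applies Lemma~\ref{nel piano!} to the \emph{removed} component $E_2$; in the new cluster $\F$ its interface with $\E(k)$ becomes \emph{free} boundary of $\F(k)=\E(k)$, so the curvature comparison is simply $C_{k,1}=C_{k,0}=h(\E(k))$, which collapses to $(h(\E(k))+h(\E(1)))|\E(k)|=0$ with no algebra. You instead work with the \emph{kept} component $F$, so the interface remains an $i$--$j_0$ interface in both clusters and you must equate two rational expressions in the volumes to force $|F|=|\E(i)|$. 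Both routes succeed; the paper's buys a cleaner endgame and avoids having to invoke the finiteness of $\S(\E'(0);\Om)$ for the auxiliary cluster. One small remark: your containment $\bd_{\pa A_i}(\pa A_i\cap\pa F)\subseteq\S(\E(0);\Om)$ already holds without adjoining any ``arc endpoints'' (since $F\cc\Om$, its boundary is a closed $C^{1,\a}$ curve with no endpoints), so hypothesis~(1) of Lemma~\ref{nel piano!} is immediate.
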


\begin{proof}
Assume, without loss of generality $i=1$ and let $E_1$ and $E_2$ be two different components of $\E(1)$. By minimality it must hold
	\begin{equation}\label{mi impiccoooooo}
	\frac{P(E_1)}{|E_1|}=\frac{P(E_2)}{|E_2|}=\frac{P(\E(1))}{|\E(1)|}.
	\end{equation}
The component $E_2$ is a Cheeger set for 
	\[
	A=\left(\bigcup_{j\neq 1} \Om\setminus \E(j)\right) \cup E_1
	\] 
and by Theorem \ref{mainthm1}, every $x\in \pa E_2\cap \pa A$ is a regular point for $\pa A$. Moreover $\bd_{\pa A}(\pa A \cap\pa E_2)\subseteq \S(\E(0);\Om)$ (since $\E(i)\cc \Om$) and thus, thanks to Proposition \ref{struttura singolaritapiano}, we have 
	\[
	\#(\bd_{\pa A}(\pa A \cap\pa E_2))\leq \#(\S(\E(0);\Om))<+\infty.
	\]
Therefore we can exploit Lemma \ref{nel piano!} on $E_2$ and conclude that $\H^1(\pa E_2 \cap \pa A)>0$. In particular we deduce that there exists an index $k\neq 0,1$ such that $\H^1(\pa E_2\cap \pa \E(k))>0$. Define the new cluster $\F(1)=E_1$, $\F(j)=\E(j)$ for $j\neq 1$ (see Figure \ref{lemmaindecompo}). Thanks to \eqref{mi impiccoooooo} it holds:
\begin{equation}
H_N(\Om)=\sum_{i=1}^N \frac{P(\F(i))}{|\F(i)|}.
\end{equation}
Hence $\F$ it is also a Cheeger $N$-cluster for $\Om$. Consider the piece of boundary 
	\[
	S=[\pa \E(k)\cap \pa E_2]\setminus \S(\E(0);\Om) \neq \emptyset
	\]
from the old cluster $\E$.  Proposition \ref{curvature} tells us that $S$ must be a circular arc and that $\E(k)$ must has constant mean curvature on $S$ equal to:
 $$C_{k,1}=\frac{|\E(1)|h(\E(k))-|\E(k)|h(\E(1))}{|\E(1)|+|\E(k)|}.$$
 \begin{figure}
\begin{center}
 \includegraphics[scale=2.2]{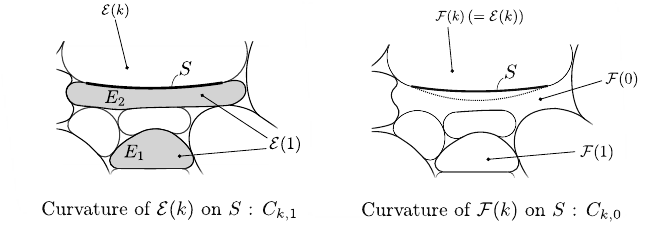}\caption{{\small }}\label{lemmaindecompo}
 \end{center}
\end{figure}
From the other side it holds $\F(k)=\E(k)$ and, since $S$ is now a part of the free boundary of $\F(k)$ (we have removed the component $E_2$), we have that $\F(k)=\E(k)$ must has constant mean curvature on $S$ also equal to:
$$C_{k,0}=h(\F(k))=h(\E(k)).$$
Thus equality $C_{k,1}=C_{k,0}$ must be in force, implying $(h(\E(k))+h(\E(1)))|\E(k)|=0$ which is impossible. 
\end{proof}

\begin{proof}[Proof of Theorem \ref{mainthm3}]
It follows from Propositions \ref{struttura singolaritapiano}, \ref{curvature} and \ref{connessionecamere}.
\end{proof}
\section{The limit of $\Lambda_N^{(p)}$ as $p$ goes to one }\label{limite}
We conclude this paper by focusing on the asymptotic trend of $H_N$. We first briefly state the following Theorem involving the existence of the optimal partition for problem \eqref{p-laplacian}.
\begin{theorem}
For every $1<p\leq n$ there exists an optimal partition for $\Om$ in quasi-open sets $\{\Om_i\}_{i=1}^N$ such that 
$$\Lambda_N^{(p)}(\Om)=\sum_{i=1}^N \lambda_1^{(p)}(\Om_i).$$
\end{theorem}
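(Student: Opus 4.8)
The plan is to run the direct method of the calculus of variations after reformulating the partition problem \eqref{p-laplacian} in terms of $N$-tuples of Sobolev functions. First I would show that
\begin{equation*}
\Lambda_N^{(p)}(\Om)=\inf\left\{\sum_{i=1}^N\int_\Om|\nabla u_i|^p\d x\ \Big|\ u_i\in W^{1,p}_0(\Om),\ \|u_i\|_{L^p(\Om)}=1,\ u_iu_j=0\ \text{a.e. for }i\neq j\right\}.
\end{equation*}
The inequality ``$\leq$'' is elementary: given an admissible $N$-tuple $(u_1,\dots,u_N)$, the sets $\Om_i:=\{|u_i|>0\}=\bigcup_{m\in\N}\{|u_i|>1/m\}$ are quasi-open (countable unions of upper level sets of $W^{1,p}$ functions), of positive and finite measure, and pairwise disjoint up to a Lebesgue-null set; since $u_i$ is an admissible test function for $\l_1^{(p)}(\Om_i)$ one gets $\l_1^{(p)}(\Om_i)\leq\int_\Om|\nabla u_i|^p\d x$, hence $\Lambda_N^{(p)}(\Om)\leq\sum_i\l_1^{(p)}(\Om_i)\leq\sum_i\int_\Om|\nabla u_i|^p\d x$. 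The inequality ``$\geq$'' follows by picking, for a partition $\{\Om_i\}$ into quasi-open chambers and $\e>0$, almost-extremal $L^p$-normalised functions $u_i^\e\in W^{1,p}_0(\Om_i)\subseteq W^{1,p}_0(\Om)$ for $\l_1^{(p)}(\Om_i)$: these vanish quasi-everywhere, hence a.e., outside $\Om_i$, so $u_i^\e u_j^\e=0$ a.e. for $i\neq j$, and they realise $\sum_i\l_1^{(p)}(\Om_i)$ up to $N\e$.

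With this reformulation in hand I would take a minimising sequence $(u_1^k,\dots,u_N^k)$; it exists because $\Lambda_N^{(p)}(\Om)<+\infty$, as one sees by testing with $N$ disjoint balls contained in $\Om$. Since each term $\int_\Om|\nabla u_i^k|^p\d x\geq0$ and their sum stays bounded, and since $\|u_i^k\|_{L^p}=1$, each sequence $(u_i^k)_k$ is bounded in $W^{1,p}_0(\Om)$. As $1<p<\infty$, this space is reflexive, so, up to a subsequence, $u_i^k\rightharpoonup u_i$ weakly in $W^{1,p}_0(\Om)$ for every $i$; since $\Om$ is bounded, the Rellich--Kondrachov theorem gives the compact embedding $W^{1,p}_0(\Om)\hookrightarrow L^p(\Om)$, so, passing to a further subsequence, $u_i^k\to u_i$ strongly in $L^p(\Om)$ and pointwise a.e. in $\Om$. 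Therefore $\|u_i\|_{L^p}=1$ (in particular $u_i\not\equiv0$) and $u_iu_j=\lim_k u_i^ku_j^k=0$ a.e. for $i\neq j$, so $(u_1,\dots,u_N)$ is admissible. Weak lower semicontinuity of the convex functionals $v\mapsto\int_\Om|\nabla v|^p\d x$ together with superadditivity of the $\liminf$ gives
\begin{equation*}
\sum_{i=1}^N\int_\Om|\nabla u_i|^p\d x\leq\sum_{i=1}^N\liminf_{k\to\infty}\int_\Om|\nabla u_i^k|^p\d x\leq\liminf_{k\to\infty}\sum_{i=1}^N\int_\Om|\nabla u_i^k|^p\d x=\Lambda_N^{(p)}(\Om),
\end{equation*}
and admissibility forces the reverse inequality, so $(u_1,\dots,u_N)$ minimises the reformulated problem.

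Finally I would extract the partition: set $\Om_i:=\{|u_i|>0\}$ as above. These are quasi-open, of positive finite measure, pairwise disjoint up to a null set, hence an admissible competitor for $\Lambda_N^{(p)}(\Om)$; moreover $u_i\in W^{1,p}_0(\Om_i)$ with $\|u_i\|_{L^p}=1$, so $\l_1^{(p)}(\Om_i)\leq\int_\Om|\nabla u_i|^p\d x$. Summing and using the ``$\leq$'' direction of the reformulation,
\begin{equation*}
\Lambda_N^{(p)}(\Om)\leq\sum_{i=1}^N\l_1^{(p)}(\Om_i)\leq\sum_{i=1}^N\int_\Om|\nabla u_i|^p\d x=\Lambda_N^{(p)}(\Om),
\end{equation*}
so all inequalities are equalities and $\{\Om_i\}_{i=1}^N$ is the desired optimal partition. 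The only genuinely delicate points, and where I expect the bulk of the technical work to lie, are the fine potential-theoretic facts used above: that $\{|u_i|>0\}$ is quasi-open, that a function in $W^{1,p}_0(\Om)$ whose quasi-continuous representative vanishes q.e. outside a quasi-open set $A$ belongs to $W^{1,p}_0(A)$, and that a.e.-disjointness of the supports of the $u_i$ upgrades to the $p$-quasi-everywhere disjointness of the chambers required to make the $\Om_i$ an admissible partition. All of these are standard (see \cite{EvGa91}), but they are the substance of the argument.
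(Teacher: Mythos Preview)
Your argument is correct and is precisely the standard direct-method proof the paper has in mind: the paper does not actually prove this statement but simply remarks that it follows ``as a simple variation of the argument in \cite{CaLi07}, or as a consequence of more general results contained in \cite{BucBuH98}, \cite{BucVel13} or \cite{BuDM93}'', and the reformulation in terms of $N$-tuples of $W^{1,p}_0$-functions with a.e.\ disjoint supports, followed by weak compactness, Rellich--Kondrachov, and lower semicontinuity, is exactly that argument. The potential-theoretic facts you flag at the end (quasi-openness of $\{|u_i|>0\}$ and the identification $u_i\in W^{1,p}_0(\{|u_i|>0\})$) are indeed the only delicate points, and they are standard; note also that since the paper's notion of $N$-cluster only requires $|\Om_i\cap\Om_j|=0$, the a.e.\ disjointness you obtain is already enough and no upgrade to q.e.\ disjointness is needed.
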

\begin{proof}
The existence of an optimal partition for $\Lambda_N^{(p)}(\Om) $ follow as a simple variation of the argument in \cite{CaLi07}, or 
as a consequence of more general results contained in \cite{BucBuH98}, \cite{BucVel13} or \cite{BuDM93} and thus we omit the details. 
\end{proof}
In the following Proposition we compute the limit  of $\Lambda_N^{(p)}$ as $p$ goes to one.
\begin{proposition}\label{limit}
If $\Om$ is an open bounded set with $C^1$ boundary then
$$\lim_{p\rightarrow 1} \Lambda_N^{(p)}(\Om)=H_N(\Om).$$
\end{proposition}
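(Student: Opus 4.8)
The plan is to establish the two one-sided bounds $\liminf_{p\to 1^{+}}\Lambda_N^{(p)}(\Om)\ge H_N(\Om)$ and $\limsup_{p\to 1^{+}}\Lambda_N^{(p)}(\Om)\le H_N(\Om)$ separately. The first is immediate from the already proven estimate \eqref{Lp lowerbound}: since $\Lambda_N^{(p)}(\Om)\ge N^{-(p-1)}\bigl(H_N(\Om)/p\bigr)^{p}$ for every $p>1$, and since $N^{p-1}\to 1$ and $\bigl(H_N(\Om)/p\bigr)^{p}\to H_N(\Om)$ as $p\to 1^{+}$ (using $H_N(\Om)\in(0,+\infty)$ from Theorem \ref{existence}), the right-hand side tends to $H_N(\Om)$.

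For the upper bound I would exhibit, for every $p>1$, an admissible competitor for \eqref{p-laplacian} whose energy converges to $H_N(\Om)$. Let $\E=\{\E(i)\}_{i=1}^{N}$ be a Cheeger $N$-cluster of $\Om$ (Theorem \ref{existence}); by \eqref{selfcheeger} one has $h(\E(i))=P(\E(i))/|\E(i)|$ for each $i$ and hence $\sum_{i=1}^{N}h(\E(i))=H_N(\Om)$. I would take as competitor the family of topological interiors $\{\mathring{\E(i)}\}_{i=1}^{N}$: these are open, essentially pairwise disjoint subsets of $\Om$, hence quasi-open sets of $\Om$. The crucial step is to show that this replacement does not change the Cheeger ratios, i.e. $h(\mathring{\E(i)})=h(\E(i))$. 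First, $|\pa\E(i)|=0$: by Theorem \ref{regolare} each chamber is a $(\Lambda,r_0)$-perimeter-minimizer in $\Om$, so assertion (i) of Theorem \ref{mainthm1} gives that $\pa\E(i)\cap\Om$ is $\H^{n-1}$-equivalent to the $C^{1,\a}$-hypersurface $\pared\E(i)\cap\Om$, which has $\H^{n-1}$-measure at most $P(\E(i))<+\infty$ and therefore vanishing Lebesgue measure; while $\pa\E(i)\setminus\Om\subseteq\pa\Om$ is Lebesgue-null because $\Om$ has $C^{1}$ boundary. It follows that $|\mathring{\E(i)}|=|\E(i)|>0$ and $P(\mathring{\E(i)})=P(\E(i))$, so $P(\mathring{\E(i)})/|\mathring{\E(i)}|=h(\E(i))$; combining this (take $\mathring{\E(i)}$ itself as competitor in the infimum defining its Cheeger constant) with the monotonicity $h(\mathring{\E(i)})\ge h(\E(i))$ coming from $\mathring{\E(i)}\subseteq\E(i)$ yields $h(\mathring{\E(i)})=h(\E(i))$.

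It then remains to conclude: since $\{\mathring{\E(i)}\}_{i=1}^{N}$ is admissible in \eqref{p-laplacian}, $\Lambda_N^{(p)}(\Om)\le\sum_{i=1}^{N}\lambda_1^{(p)}(\mathring{\E(i)})$ for all $p>1$; applying \eqref{eqn limite per p che tende a uno} to each bounded open set $\mathring{\E(i)}$ (reducing if necessary to its connected components, together with \eqref{lb p-eig}) gives $\lim_{p\to 1}\lambda_1^{(p)}(\mathring{\E(i)})=h(\mathring{\E(i)})=h(\E(i))$, whence $\limsup_{p\to 1^{+}}\Lambda_N^{(p)}(\Om)\le\sum_{i=1}^{N}h(\E(i))=H_N(\Om)$. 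Together with the lower bound this gives $\lim_{p\to1}\Lambda_N^{(p)}(\Om)=H_N(\Om)$. I expect the only genuinely delicate point to be this passage from the finite-perimeter chambers to quasi-open sets of the same Cheeger ratio — that is, the facts that $|\pa\E(i)|=0$ and $h(\mathring{\E(i)})=h(\E(i))$ — which is precisely where the interior regularity from Theorem \ref{mainthm1} and the $C^{1}$-regularity of $\pa\Om$ are used; everything else is a direct assembly of previously established inequalities.
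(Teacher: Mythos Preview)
Your argument is correct and follows the same two-sided strategy as the paper; the lower bound via \eqref{Lp lowerbound} is identical. The difference lies in how the admissible competitor for the upper bound is produced. The paper does not use the topological interiors $\mathring{\E(i)}$ directly; instead it invokes a strict interior approximation result (Schmidt): for each chamber there exist open sets $\E_t(i)\cc\E(i)$ with $\E_t(i)\to\E(i)$ in $L^1$ and $P(\E_t(i))\to P(\E(i))$, so that the $\E_t(i)$ have pairwise disjoint closures and one lets first $p\to 1$ (using \eqref{eqn limite per p che tende a uno}) and then $t\to 0$. Your route is more self-contained in that it avoids this external approximation lemma and uses only the regularity already proved in Theorem~\ref{mainthm1} to get $|\pa\E(i)|=0$ and hence $h(\mathring{\E(i)})=h(\E(i))$; the small price is that you must check the $\mathring{\E(i)}$ are genuinely disjoint (they are: a common interior point would force $|\E(i)\cap\E(j)|>0$) rather than merely essentially disjoint, so that they are admissible in \eqref{p-laplacian}. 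Both approaches ultimately lean on the inner regularity of the chambers and the $C^1$ regularity of $\pa\Om$; the paper's version trades your density/measure-zero argument for an approximation black box.
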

\begin{proof}
Let $\E$ be a Cheeger $N$-cluster for $\Om$. Since $\pa \E(i)$ is $C^1$, for every $i=1,\ldots,N$ there exists a sequence of open sets $\{\E_t(i)\}_{t>0}$ such that $\E_t(i)\cc \E(i)$ for all $t>0$ and
$$\E_{t}(i)\rightarrow \E(i) \text{  in $L^{1}$}, \ \ \ P(\E_{t}(i))\rightarrow P(\E(i)),$$ 
as $t\to 0$ (see \cite{Schmidt2015}). In this way, since $\E_t(i)$ are open sets (and thus quasi-open sets) strictly contained into $\Om$ and with disjoint closure, by exploiting \eqref{eqn limite per p che tende a uno} we reach: 
\begin{align*}
H_N(\Om)&= \lim_{t\rightarrow 0} \sum_{i=1}^{N}\frac{P(\E_{t}(i))}{|\E_{t}(i)|}\geq  \lim_{t\rightarrow 0} \sum_{i=1}^{N}h(\E_{t}(i))\\
&\geq  \lim_{t\rightarrow 0} \limsup_{p\rightarrow 1}\sum_{i=1}^{N}\lambda_N^{(p)}(\E_{t}(i))\geq \limsup_{p\rightarrow 1}\Lambda_N^{(p)}(\Om).
\end{align*}
On the other hand, thanks to \eqref{lb p-eig} and to Jensen's inequality we get \eqref{Lp lowerbound}:
\begin{align*}
\sum_{j=1}^N \lambda_1^{(p)}(\E(i))&\geq  \sum_{j=1}^N \left(\frac{h(\E(i))}{p}\right)^{p} \geq \frac{1}{N^{p-1}}\left(\sum_{j=1}^N \frac{h(\E(i))}{p}\right)^{p}\\
&\geq\frac{1}{N^{p-1}}\left( \frac{H_N(\Om)}{p}\right)^{p}
\end{align*}

which completes the proof.
\end{proof}
\subsection{On the asymptotic behavior of $H_N$ in dimension $n=2$}\label{asymptoyc of HN}

\begin{theorem}\label{asymptotic behavior}
Denote with $B$ a ball of unit radius and with $H$ a unit-area regular hexagon. Let $N\geq 2$ and $\Om$ be an open, bounded and connected set with $C^{1}$ boundary and finite perimeter in the plane. Then the following assertions hold true:
\begin{itemize}
\item[1)] If $\E$ is a Cheeger $(N+1)$-cluster for $\Om$ then:
	\[
	|\E(i)|\geq \frac{h(B)^2\pi}{(H_{N+1}(\Om)-H_N(\Om))^2}  \ \ \ \forall \ i=1,\ldots,N+1;
	\]
\item[2)] $\displaystyle H_N(\Omega)+\frac{h(B)\sqrt{\pi}}{\sqrt{|\Omega|}}\sqrt{N+1}\leq H_{N+1}(\Omega),$  for all $N\in \N$;
\item[3)] for every $0\leq \e<\frac{1}{2}$ there exists $N_0(\Om,\e)$ such that:
	\[
	\frac{\sqrt{\pi} h(B)}{\sqrt{|\Omega|}}N^{\frac{3}{2}}\leq H_N(\Omega)\leq \frac{h(H)}{\sqrt{|\Omega|}}N^{\frac{3}{2}}+ N^{\frac{3}{2}-\e} \ \ \ \  \text{for all $N\geq N_0(\Om,\e)$}.
	\]
\end{itemize}
\end{theorem}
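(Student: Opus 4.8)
The plan is to establish the three assertions in order: Assertion 1) by removing a single chamber from a Cheeger $(N+1)$-cluster and combining this with a planar isoperimetric estimate; Assertion 2) as an immediate consequence of Assertion 1) by summing over the chambers; and Assertion 3) by proving its two inequalities separately, the lower one from the same isoperimetric estimate plus Cauchy--Schwarz, and the upper one from a honeycomb test configuration. Throughout, the only nontrivial inputs are Theorem \ref{existence} (existence) and the self-Cheeger property \eqref{selfcheeger}.

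For Assertion 1), fix a Cheeger $(N+1)$-cluster $\E$ and an index $i$. The family $\{\E(j)\}_{j\neq i}$ is an admissible $N$-cluster inside $\Om$, so using \eqref{selfcheeger} and the definition of $H_N$,
$$h(\E(i))=\frac{P(\E(i))}{|\E(i)|}=H_{N+1}(\Om)-\sum_{j\neq i}\frac{P(\E(j))}{|\E(j)|}\leq H_{N+1}(\Om)-H_N(\Om).$$
On the other hand, for every finite-perimeter $F\subseteq\E(i)$ the planar isoperimetric inequality gives $P(F)\geq 2\sqrt{\pi}\,|F|^{1/2}\geq 2\sqrt{\pi}\,|\E(i)|^{-1/2}|F|$, hence $h(\E(i))\geq 2\sqrt{\pi}\,|\E(i)|^{-1/2}=h(B)\sqrt{\pi}\,|\E(i)|^{-1/2}$ (recall $h(B)=2$ in the plane). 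Combining the two displays and squaring yields Assertion 1). Assertion 2) follows at once: since the chambers are pairwise disjoint subsets of $\Om$,
$$|\Om|\geq\sum_{i=1}^{N+1}|\E(i)|\geq (N+1)\,\frac{h(B)^2\pi}{(H_{N+1}(\Om)-H_N(\Om))^2},$$
and rearranging gives $H_{N+1}(\Om)-H_N(\Om)\geq h(B)\sqrt{\pi}\sqrt{N+1}/\sqrt{|\Om|}$.

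For the lower bound in Assertion 3), let $\E$ be a Cheeger $N$-cluster. By \eqref{selfcheeger} and the isoperimetric estimate above, $H_N(\Om)=\sum_{i=1}^N h(\E(i))\geq h(B)\sqrt{\pi}\sum_{i=1}^N|\E(i)|^{-1/2}$. Two applications of Cauchy--Schwarz, namely $N^2\leq\bigl(\sum_i|\E(i)|^{-1/2}\bigr)\bigl(\sum_i|\E(i)|^{1/2}\bigr)$ and $\sum_i|\E(i)|^{1/2}\leq\sqrt{N}\,\bigl(\sum_i|\E(i)|\bigr)^{1/2}\leq\sqrt{N}\sqrt{|\Om|}$, give $\sum_i|\E(i)|^{-1/2}\geq N^{3/2}/\sqrt{|\Om|}$, so $H_N(\Om)\geq h(B)\sqrt{\pi}\,N^{3/2}/\sqrt{|\Om|}$ for every $N$.

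The upper bound in Assertion 3) is the part requiring actual work. The plan is to produce an admissible $N$-cluster from a honeycomb tiling: fix $a>0$, tile $\R^2$ by regular hexagons of area $a$, and take as chambers the Cheeger sets of $N$ of the hexagons contained in $\Om$. These are disjoint finite-perimeter subsets of $\Om$, and by scaling each has Cheeger ratio exactly $h(H)/\sqrt{a}$, so $H_N(\Om)\leq N\,h(H)/\sqrt{a}$. The point is that one can afford $a$ close to $|\Om|/N$: since $\Om$ is bounded with $C^1$ (hence rectifiable) boundary, the hexagons meeting $\pa\Om$ all lie in a tube around $\pa\Om$ of width $O(\sqrt{a})$ and area $O(\H^1(\pa\Om)\sqrt{a})$, so their number is $O(\H^1(\pa\Om)/\sqrt{a})$ and the number of hexagons contained in $\Om$ is at least $|\Om|/a - c(\Om)/\sqrt{a}$. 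Choosing $a=a(N)$ maximal subject to this being $\geq N$ and solving the resulting quadratic inequality in $1/\sqrt{a}$ gives $1/\sqrt{a}=\sqrt{N/|\Om|}+O_\Om(1)$ as $N\to\infty$, whence
$$H_N(\Om)\leq N\,\frac{h(H)}{\sqrt{a}}\leq\frac{h(H)}{\sqrt{|\Om|}}\,N^{3/2}+C'(\Om)\,N+o(N).$$
Because $3/2-\e>1$ when $\e<1/2$, the remainder $C'(\Om)N+o(N)$ is eventually dominated by $N^{3/2-\e}$, which yields the constant $N_0(\Om,\e)$ in the statement. I expect this boundary bookkeeping — making the count of hexagons that "fit inside $\Om$" precise and extracting from it a remainder that is $o(N^{3/2})$ — to be the main obstacle; everything else is elementary once existence and \eqref{selfcheeger} are available.
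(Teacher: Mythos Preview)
Your proposal is correct and follows essentially the same route as the paper: Assertion 1) via removal of one chamber plus the planar Cheeger inequality $h(E)\geq h(B)\sqrt{\pi}/\sqrt{|E|}$, Assertion 2) by summing, the lower bound in 3) via Cauchy--Schwarz (the paper states the same inequality in the equivalent form $\sum x_i^{-1/2}\geq N^{3/2}(\sum x_i)^{-1/2}$), and the upper bound via a hexagonal grid of area $\approx |\Om|/N$ with a boundary-tube count. The only cosmetic difference is in the bookkeeping for the upper bound: the paper fixes $\delta(N)=|\Om|/N-|\Om|/N^\alpha$ with $1+\e<\alpha<3/2$, shows the number $k(N)$ of hexagons contained in $\Om$ satisfies $k(N)\geq N$, and then uses the monotonicity from Assertion 2) together with the trivial bound $k(N)\leq |\Om|/\delta(N)$ to control $H_{k(N)}$; your version instead solves the quadratic for $1/\sqrt{a}$ and picks exactly $N$ hexagons, which gives remainder $O(N)$ directly and spares the appeal to monotonicity.
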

\begin{proof}
Thanks to the planar Cheeger inequality 
\begin{equation}\label{CheegerInequality}
h(E)\geq \sqrt{\pi}\frac{h(B)}{\sqrt{|E|}}
\end{equation}
we observe that, given $\E$ a Cheeger $(N+1)$-cluster of $\Om$, it holds:
\begin{align*}
H_{N+1}(\Om)&= \sum_{i=1}^{N+1} \frac{P(\E(i))}{|\E(i)|} \geq\frac{P(\E(j))}{|\E(j)|} +\sum_{i=1, i\neq j}^{N+1} \frac{P(\E(i))}{|\E(i)|}\\
&\geq h(\E(j))+H_{N}(\Om)\geq \frac{\sqrt{\pi}h(B)}{\sqrt{|\E(j)|}}+H_{N}(\Om)
\end{align*}
which, implies Property 1).\\

Property 2) follows from Property 1):
\begin{eqnarray*}
|\Om|-\frac{(N+1)h(B)^2\pi }{(H_{N+1}(\Om)-H_N({\Om}))^2} &\geq&|\Om|-\sum_{i=1}^{N+1}|\E(i)|\geq 0
\end{eqnarray*}
and so
\begin{eqnarray*}
|\Om|&\geq& \frac{(N+1)h(B)^2\pi}{(H_{N+1}(\Om)-H_N({\Om}))^2},
\end{eqnarray*}
which implies
\begin{eqnarray*}
(H_{N+1}(\Om)-H_N({\Om}))^2&\geq& \frac{(N+1)h(B)^2\pi}{|\Om|}
\end{eqnarray*}
and thus
\begin{eqnarray*}
H_{N+1}(\Om)&\geq& H_N({\Om})+ \sqrt{(N+1)}\frac{\sqrt{\pi}h(B)}{\sqrt{|\Om|}}.
\end{eqnarray*}
Let us prove Property 3). Let $\E$ be a Cheeger $N$-cluster for $\Om$. We exploit again  the Cheeger inequality \eqref{CheegerInequality} and
we obtain the lower bound
\begin{align*}
H_N(\Om)&=\sum_{i=1}^{N}h(\E(i))\geq  \sqrt{\pi} h(B)\sum_{i=1}^{N}\frac{1}{\sqrt{|\E(i)|}}\\
&\geq   \sqrt{\pi} h(B) N^{\frac{3}{2}}\left(\frac{1}{\sum_{i=1}^{N}|\E(i)|}\right)^{\frac{1}{2}}\geq   \sqrt{\pi} \frac{h(B)}{\sqrt{|\Om |}} N^{\frac{3}{2}}. 
\end{align*} 
Here we have used the inequality
$$\sum_{i=1}\frac{1}{x_i^{\frac{1}{n}}}\geq N^{\frac{n+1}{n}}\left(\frac{1}{\sum_{i=1}^Nx_i}\right)^{\frac{1}{n}}, \ \ \forall \ N,n\geq 2, \  x_i >0.$$
Let us focus on the upper bound. Let $\H_{\de}$ be the standard hexagonal grid of the plane, made by hexagons of area $\de$ (the one depicted in Figure \ref{fig reference}), labeled with natural numbers. 
\begin{figure}
\begin{center}
 \includegraphics[scale=0.7]{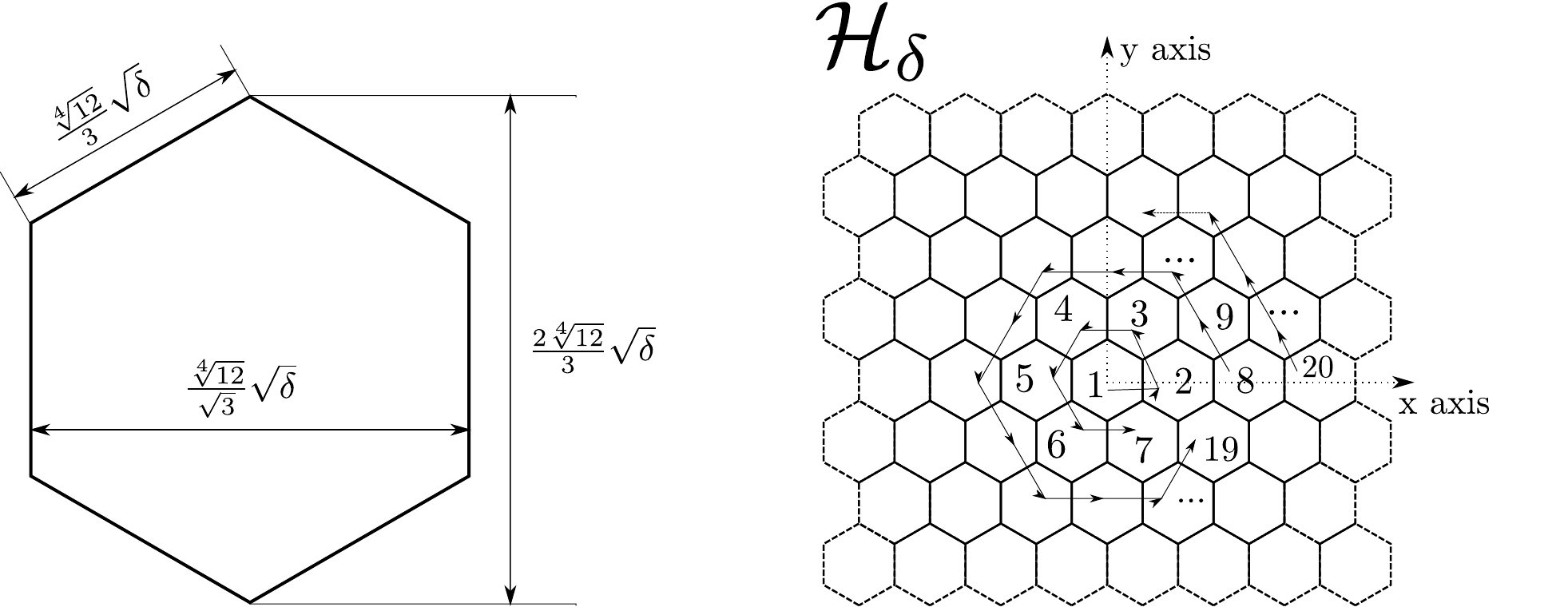}\caption{{\small The standard hexagonal grid of the plane, made by hexagons of area $\de$, together with one of its possible labeling.}}\label{fig reference}
 \end{center}
\end{figure}
Define
	\begin{align*}
	I(\de)&:=\{i\in \N \ | \ \H_{\de}(i)\cc \Om\},\\
	k(\de)&:=\#(I(\de) ).
	\end{align*}
Up to a relabeling, let us assume that $I(\de)=\{1,\ldots,k(\de)\}$. Note that since $\H_{\de}(i)\cc  \Om$ we get
	\[
	H_{k(\de)}(\Om)\leq\sum_{i=1}^{k(\de)} h(\sqrt{\delta} H)=\frac{k(\de)}{\sqrt{\de}}h(H).
	\]
From $\H_{\de}(i) \subset \Om$ for all $i=1,\ldots,k(\de)$ it follows
$$k(\de) \leq \frac{|\Om|}{\de }.$$
If we set $\de(N)=\frac{|\Om|}{N}-\frac{|\Om|}{N^{\a}}$ for some $\a>1$ to be chosen, we are led to 
	\begin{equation}\label{forse stasera finisco}
	H_{k(N)}(\Om)\leq \frac{N^{\frac{3}{2}}}{\sqrt{|\Om|}\left(1-N^{1-\a}\right)^{\frac{3}{2}}} h(H).
	\end{equation}
where $k(N)=k(\de(N))$.
Note that, by setting
	\[
	(\pa \Omega)_{r(N)}:=\pa\Om+B_{r(N)}
	\]
where $r(N)=\sqrt{\de(N)}\diam(H)$, it must hold 
$$\left(\Om\setminus \bigcup_{i=1}^{k(N)} \H(i)\right)\subseteq (\pa \Omega)_{r(N)}.$$
Since $\Om$ has Lipschitz boundary, for $N$ bigger than $N_0(\Om)$, it also holds that
	\[
	| (\pa \Omega)_{r(N)}|\leq 4 r(N) P(\Om)
	\]
and so:
\begin{align*}
|\Om|-\de(N) k(N) &\leq  |(\pa \Omega)_{r(N)}| \leq  4r(N)P(\Om) = 4\sqrt{\de(N)}\diam(H)P(\Om),\\
\end{align*}
that imply
\begin{align*}
 k(N) &\geq  \frac{N}{1-N^{1-\a}} - 4\sqrt{N}\frac{P(\Om)\diam(H)}{\sqrt{|\Om|} \sqrt{1-N^{1-\a}}} .
\end{align*}
For all $N$ bigger than some fixed $N_0$ depending only on $\Om$. It is easy to show that, for all $\a<\frac{3}{2}$, up to further increase $N_0$ in dependence only on $\Om$ and $\a$, it holds 
	\[
	\frac{N}{1-N^{1-\a}} - 4\sqrt{N}\diam(H)\frac{P(\Om)}{\sqrt{|\Om|} \sqrt{1-N^{1-\a}}} \geq N.
	\]
Hence by choosing $\a <\frac{3}{2}$ we obtain
	\[
	k(N)\geq N \ \ \ \ \ \forall \ N\geq N_0,
	\]
and, thanks to the monotonicity given by Property 2) and to \eqref{forse stasera finisco}, provided also $\a>1+\e$ we reach:
	\begin{align*}
	H_N(\Om) &\leq H_{k(N)}(\Om)\\
	&\leq \frac{N^{\frac{3}{2}}}{\sqrt{|\Om|}\left(1-N^{1-\a}\right)^{\frac{3}{2}}} h(H) \\
	&\leq \frac{h(H)}{\sqrt{|\Om|}}  (N^{\frac{3}{2}}+N^{\frac{3}{2}-\e})  \ \ \ \ \ \text{for all $N>N_0(\Om,\e)$}.
	\end{align*}
\end{proof}
 
\bibliography{referencescopia}

\begin{thebibliography}{BDGG69}

\bibitem[Ale62]{aleksandrov1962uniqueness}
A.~D. Aleksandrov.
\newblock Uniqueness theorems for surfaces in the large. {I}.
\newblock {\em Amer. Math. Soc. Transl.(2)}, 21:341--354, 1962.

\bibitem[BB05]{BucBu05}
D.~Bucur and G.~Buttazzo.
\newblock {\em Variational methods in shape optimization problems}.
\newblock Springer - Progress in Nonlinear Differential Equations and Their
  Applications, Vol. 65, ISBN: 978-0-8176-4359-1, 2005.

\bibitem[BBH98]{BucBuH98}
D.~Bucur, G.~Buttazzo, and A.~Henrot.
\newblock Existence results for some optimal partition problems.
\newblock {\em Advances in Mathematical Sciences and Applications}, 8:571--579,
  1998.

\bibitem[BBO09]{BouBucO09}
B.~Bourdin, D.~Bucur, and {\'E}.~Oudet.
\newblock Optimal partitions for eigenvalues.
\newblock {\em SIAM Journal on Scientific Computing}, 31(6):4100--4114, 2009.

\bibitem[BDGG69]{bombieri1969minimal}
E.~Bombieri, E.~De~Giorgi, and E.~Giusti.
\newblock Minimal cones and the {B}ernstein problem.
\newblock {\em Inventiones mathematicae}, 7(3):243--268, 1969.

\bibitem[BDM93]{BuDM93}
G.~Buttazzo and G.~Dal~Maso.
\newblock An existence result for a class of shape optimization problems.
\newblock {\em Archive for Rational Mechanics and Analysis}, 122(2):183--195,
  1993.

\bibitem[BM82]{BaMa82}
E.~Barozzi and U.~Massari.
\newblock Regularity of minimal boundaries with obstacles.
\newblock {\em Rendiconti del Seminario Matematico della Università di
  Padova}, 66:129--135, 1982.

\bibitem[Buc12]{bucur2012minimization}
D.~Bucur.
\newblock Minimization of the k-th eigenvalue of the {D}irichlet {L}aplacian.
\newblock {\em Archive for Rational Mechanics and Analysis}, 206(3):1073--1083,
  2012.

\bibitem[But11]{Bu10}
G.~Buttazzo.
\newblock Spectral optimization problems.
\newblock {\em Revista Matemática Complutense}, 24(2):277--322, 2011.

\bibitem[BV13]{BucVel13}
D.~Bucur and B.~Velichkov.
\newblock Multiphase shape optimization problems.
\newblock {\em arXiv preprint arXiv:1310.2448}, 2013.

\bibitem[Che70]{Ch70}
J.~Cheeger.
\newblock A lower bound for the smallest eigenvalue of the {L}aplacian.
\newblock {\em Problems in analysis}, 625:195--199, 1970.

\bibitem[CL07]{CaLi07}
L.~Caffarelli and F.~Lin.
\newblock An optimal partition problem for eigenvalues.
\newblock {\em Journal of scientific Computing}, 31(1):5--18, 2007.

\bibitem[CLM15]{CiLeMaIC1}
M.~Cicalese, G.~P. Leonardi, and F.~Maggi.
\newblock Improved convergence theorems for bubble clusters. {I}. {T}he planar
  case.
\newblock 2015.
\newblock preprint arXiv:1409.6652.

\bibitem[DPP09]{de2009short}
G.~De~Philippis and E.~Paolini.
\newblock A short proof of the minimality of {S}imons cone.
\newblock {\em Rend. Sem. Mat. Univ. Padova}, 121:233--241, 2009.

\bibitem[EG91]{EvGa91}
L.~Evans and R.~Gariepy.
\newblock {\em Measure theory and fine properties of functions}, volume~5.
\newblock CRC press, 1991.

\bibitem[Hen06]{H06}
A.~Henrot.
\newblock {\em Extremum problems for eigenvalues of elliptic operators: Antoine
  Henrot}.
\newblock Springer, 2006.

\bibitem[KN08]{KN08}
B.~Kawohl and M.~Novaga.
\newblock The p-laplace eigenvalue problem as $p \to 1$ and {C}heeger sets in a
  {F}insler metric.
\newblock {\em Journal of Convex Analysis}, 15(3):623, 2008.

\bibitem[Leo15]{Leo15}
G.P. Leonardi.
\newblock An overview on the {C}heeger problem.
\newblock 2015.

\bibitem[LP14]{LP14}
G.~P. Leonardi and A.~Pratelli.
\newblock On the {C}heeger sets in strips and non-convex domains.
\newblock {\em arXiv preprint arXiv:1409.1376v1}, 2014.

\bibitem[Mag12]{maggibook}
F.~Maggi.
\newblock {\em Sets of finite perimeter and geometric variational problems},
  volume 135 of {\em Cambridge Studies in Advanced Mathematics}.
\newblock Cambridge University Press, Cambridge, 2012.
\newblock An introduction to {G}eometric {M}easure {T}heory.

\bibitem[Par09]{Pa09}
E.~Parini.
\newblock The second eigenvalue of the p-{L}aplacian as p goes to 1.
\newblock {\em International Journal of Differential Equations}, 2010, 2009.

\bibitem[Par11]{Pa11}
E.~Parini.
\newblock An introduction to the {C}heeger problem.
\newblock {\em Surv. Math. Appl}, 6:9--22, 2011.

\bibitem[Sch15]{Schmidt2015}
T.~Schmidt.
\newblock Strict interior approximation of sets of finite perimeter and
  functions of bounded variation.
\newblock {\em Proc. Am. Math. Soc.}, 143(5):p.2069--2084, 2015.

\end{thebibliography}
\bibliographystyle{alpha}


\end{document}